\newcommand{\tomemail}{\href{mailto:tom.bachmann@zoho.com}{tom.bachmann@zoho.com}}
\ifdefvoid{\longdocument}{
\newtheorem*{defn}{Definition}
\newtheorem{prop}{Proposition}
}{
\newtheorem{prop}{Proposition}[chapter]

}
\newtheorem{corr}[prop]{Corollary}
\newtheorem{lemm}[prop]{Lemma}
\newtheorem{thm}[prop]{Theorem}
\newtheorem*{thm*}{Theorem}
\newtheorem*{corr*}{Corollary}
\newtheorem*{prop*}{Proposition}
\newtheorem*{lemm*}{Lemma}
\theoremstyle{definition}
\newtheorem{defnn}[prop]{Definition}
\theoremstyle{remark}
\newtheorem{rmk}[prop]{Remark}
\newcommand{\Aone}{{\mathbb{A}^1}}
\newcommand{\Gm}{{\mathbb{G}_m}}
\newcommand{\id}{\operatorname{id}}
\newcommand{\ZZ}{\mathbb{Z}}
\newcommand{\NN}{\mathbb{N}}
\newcommand{\FF}{\mathbb{F}}
\newcommand{\SH}{\mathbf{SH}}
\newcommand{\iHom}{\ul{\operatorname{Hom}}}
\newcommand{\Hom}{\operatorname{Hom}}
\newcommand{\colim}{\operatorname{colim}}
\newcommand{\Mod}{\textbf{-Mod}}
\newcommand{\Alg}{\textbf{-Alg}}
\DeclareRobustCommand{\ul}{\underline}
\newcommand{\tunit}{\mathbbm{1}}
\newcommand{\iso}{\cong}
\newcommand{\wequi}{\simeq}
\title{Some Remarks on Units in Grothendieck--Witt Rings}
\author{Tom Bachmann\footnote{present address: Fakultät Mathematik,
               Universität Duisburg-Essen, Thea-Leymann-Straße 9,
	       45127 Essen, Germany} \\
	LMU Munich, Munich, Germany \\ \tomemail}
\newcommand{\Fet}{{F\acute{e}t}}
\newcommand{\lra}[1]{\langle #1 \rangle}
\newcommand{\Sym}{\operatorname{Sym}}
\begin{document}

\maketitle

\begin{abstract}
We establish new structures on Grothendieck--Witt rings, including
a $GW(k)$-module structure on the unit group $GW(k)^\times$ and a
presentation of $\ul{GW}^\times$ as an infinite $\Gm$-loop sheaf.
Even though our constructions are motivated by speculations in stable
$\Aone$-homotopy theory, our arguments are purely algebraic.
\end{abstract}

\tableofcontents

\section{Introduction}

The main objects of investigation of this article are the ring-valued
functors $X \mapsto GW(X)$ and $X \mapsto \ul{GW}(X)$ and their subfunctors of
units $GW^\times(X)$ and $\ul{GW}^\times(X)$.
Recall that for a scheme $X$, $GW(X)$ is the \emph{Grothendieck--Witt}
ring of
$X$ \cite{knebusch-bilinear}, and for $X$ smooth over a perfect field,
$\ul{GW}(X)$ is the \emph{unramified Grothendieck--Witt ring} of $X$
\cite[Chapter 3]{A1-alg-top}. The connection is that for $X$ (essentially)
smooth local, we have $GW(X) = \ul{GW}(X)$, cf. \cite[Theorem A]{panin-witt-purity}.

Our principal contribution is the following. We show that if $k$ is a field of
characteristic not $2$, then the group of units $GW^\times(k)$ has a
canonical structure of a module over $GW(k)$, related to Rost's multiplicative
transfer on $GW(k)$. We use this to give a novel presentation of $GW^\times(k)$,
see Proposition \ref{prop:GW-times-presentation},
and to construct a \emph{homotopy module} $T_*$ such that $T_0 \iso
\ul{GW}^\times$. See Appendix \ref{sec:app-htpy-mod} for some recollections regarding
homotopy modules.

\paragraph{Organisation.}

We now provide an overview of the article. The remaining subsections of the introduction
provide a more leisurely account of some of the key ideas mentioned here.

In Section \ref{sec:mult-transfer}, we recall the results of Rost and his
students on multiplicative transfers for the Grothendieck--Witt ring $GW(X)$
\cite{rost2003multiplicative,krsnik2006multiplikative}. Specifically, the
multiplicative transfer of Rost is defined using a certain norm functor for modules,
also defined by Rost. We show that Rost's norm construction coincides with
a more general construction of Ferrand \cite{ferrand1998foncteur}, in the
situation where both apply.

In Section \ref{sec:tambara}, using this comparison of norm constructions,
we show that the assignment $\Fet/S \ni X \mapsto GW(X)$
defines a Tambara functor. Here $\Fet/S$ denotes the category of finite étale
schemes over $S$, and by a Tambara functor on this category we mean the evident
extension of the notion from \cite{tambara1993multiplicative}; see Definition
\ref{def:tambara} for details. Using a result of Tambara \cite[Theorem 6.1]{tambara1993multiplicative},
this also yields an alternative proof that the
norm maps extend from $Iso(Bil(\bullet))$ to $GW(\bullet)$.

Section \ref{sec:GW-module-structure} contains our main observation. We show
that if $k$ is a field of characteristic not 2, then the group of units
$GW^\times(k) \subset GW(k)$ is a module over $GW(k)$, in a unique way that is
compatible with the projection formula. By this we mean that if $A/k$ is finite
étale, then for $x \in GW(A)$ and $y \in GW^\times(k)$ the following formula
holds:
\[ y^{tr_{A/k}(x)} = N_{A/k}((y|_A)^x). \]
Note that we write the module structure as ``exponentiation''.
This result is Proposition \ref{prop:GW-module-existence-basics}.
Uniqueness of the $GW(k)$-module structure follows from the fact that as an
abelian group,
$GW(k)$ is generated by the traces of finite étale algebras, in fact traces of
degree at most 2 extensions suffice. This is explained before Proposition
\ref{prop:GW-module-existence-basics}. Existence/well-definedness is a consequence of
Serre's splitting principle; see Lemma \ref{lemm:main-observation}. In the
remainder of that section we establish many simple but useful properties of this
$GW(k)$-module structure.

In Section \ref{sec:sheaf-GW-times} we pass to associated sheaves. Thus we
study the unramified sheaf of units $\ul{GW}^\times \subset \ul{GW}$. We define
a filtration $F_\bullet \ul{GW}^\times$, where $x \in F_n$ if and only if $x
\equiv 1 \pmod{\ul{I}^n}$. Here $\ul{I}$ denotes the unramified sheaf of
fundamental ideals. We can determine the subquotients $F_n
\ul{GW}^\times/F_{n+1} \ul{GW}^\times$ and this allows us to prove in Theorem
\ref{thm:GW-times-strictly-htpy-inv} that
$\ul{GW}^\times$ is strictly homotopy invariant. This fixes a gap in a result
of Wendt \cite{wendt-units}. With this preliminary out of the way, we can extend
the module structure from the previous section to obtain a $\ul{GW}$-module
structure on $\ul{GW}^\times$. Using it we define morphisms $\beta_n^\dagger: F_n
\ul{GW}^\times \to (F_{n+1} \ul{GW}^\times)_{-1}$. In Proposition
\ref{prop:betan-iso} we prove that $\beta_n^\dagger$ is an isomorphism
for $n \ge 2$. In other words, we have
constructed a homotopy module $F_*$ with $F_n = F_n \ul{GW}^\times$ for $n \ge
2$.

In Section \ref{sec:logarithm} we study the homotopy module $F_*$. We show that
for $* \ge 2$
its canonical $\ul{GW}$-module structure and cohomological transfers coincide
respectively
with the module structure on $F_n \ul{GW}^\times$ constructed in
the previous section and Rost's multiplicative transfers. In doing so we define
an isomorphism of homotopy modules $\log: F_* \to \ul{I}^*_{tor}$. In
particular this map turns Rost's multiplicative transfers into the usual
additive ones.

In the final Section \ref{sec:delooping} we put everything together in Theorem
\ref{thm:final}. There we construct
a homotopy module $T_*$ with $T_0 \iso \ul{GW}^\times$ such that the
$\ul{GW}$-module structure and cohomological transfers on $T_0$ correspond
respectively to the $\ul{GW}$-module structure and multiplicative transfers on
$\ul{GW}^\times$ we have constructed before. Along the way we establish the
following exact sequence:
\begin{equation*} \label{eq:GW-times-presn}
  0 \to \ul{I}^2/2\ul{I} \xrightarrow{i \oplus \lra{2} - 1} \ul{GW}/2 \oplus
        \ul{I}^2_{tor} \to \ul{GW}^\times \to 1.
\end{equation*}
Here $i$ is induced from the canonical inclusion $\ul{I}^2 \to \ul{GW}$.
This appears to be a novel presentation of the group of units of $\ul{GW}$.

The paper concludes with two short appendices. In Appendix
\ref{sec:app-htpy-mod} we recall the basics about homotopy modules. In Appendix
\ref{sec:app-cont} we recall a well-known continuity result.

\paragraph{Notations and conventions.}

We make the blanket assumption throughout that $k$ is a field of characteristic
different from $2$. When dealing with homotopy modules, we also assume that the
base field is perfect, since this is when the theory is most well-behaved.

If $X$ is a scheme and $a \in \mathcal{O}^\times(X)$, we denote by $\lra{a} \in
GW(X)$ the class of the bilinear space with underlying vector bundle $\mathcal{O}_X$
and with form $(x, y) \mapsto axy$.

\paragraph{Acknowledgements.}

This work would not have been possible without Rost's initiation of the study of
multiplicative transfers on $GW$ \cite{rost2003multiplicative} and the detailed
computations by his student Wittkop in the case of a quadratic extension
\cite{wittkop2006multiplikative}. The unpublished preprint of Wendt
\cite{wendt-units} also was very influential to us, even though we do not end up
explicitly re-using any of the results from that paper.

This work has been carried out while the author was a post-doctoral student at LMU
Munich.

The author would like to thank an anonymous referee for an exceptionally
thorough review, as a result of which the presentation was improved considerably.

\paragraph{Motivation from $\Aone$-homotopy theory.}
If $E$ is an $E_\infty$-ring spectrum (in the classical sense), then
$\Omega^\infty E$ is an $E_\infty$-ring space, and hence the subspace of units
$(\Omega^\infty E)^\times$ is a grouplike $E_\infty$-space. Consequently, by
classical infinite loop space theory, there
is a (unique) connective spectrum $gl_1(E)$ such that $\Omega^\infty gl_1(E)
\wequi (\Omega^\infty E)^\times$ as $E_\infty$-spaces.

We are motivated by the question if a similar construction can be possible in
stable motivic homotopy theory \cite[Section 5]{morel2004motivic-pi0}. We thus
fix a base field $k$, and we have the adjunction
\[ \Sigma^\infty_+: \mathbf{H}(k) \leftrightarrows \SH(k): \Omega^\infty, \]
where $\mathbf{H}(k)$ denotes the unstable, unpointed motivic homotopy category over
$k$, and $\SH(k)$ denotes the $\mathbb{P}^1$-stable motivic homotopy category
over $k$. One may show that if $E \in \SH(k)$ is an $E_\infty$-ring
spectrum in the ordinary sense, then it need not be the case that there exists
$gl_1(E) \in \SH(k)$ such that $\Omega^\infty(E)^\times \wequi \Omega^\infty
gl_1(E)$, even just as motivic spaces. In other words, the space of units in $E$
need not be an infinite $\mathbb{P}^1$-loop space. This does not rule out, however, that
there might be a stronger notion of a highly structured ring spectrum in
$\SH(k)$ which has this property.\footnote{In fact, in \cite{bachmann-norms} the
author and M. Hoyois introduce the notion of \emph{normed spectra}, which are
enhancements of naive $E_\infty$-algebras in motivic spectra. If $E$ is a normed
spectrum then $\ul{\pi}_0(E)$ acquires multiplicative transfers along finite
étale morphisms. The sphere spectrum $S^0$ has a unique normed structure, and
the multiplicative transfers induced on $\ul{\pi}_0(S^0) = \ul{GW}$ are Rost's
norms \cite[Theorem 10.13]{bachmann-norms}. Unfortunately we do not know if $E$
being a normed spectrum implies that $(\Omega^\infty E)^\times$ is an infinite
$\mathbb{P}^1$-loop space.}

Being an infinite $\mathbb{P}^1$-loop space is a very strong requirement. We are
only studying one obstruction in this article: if $X$ is an infinite
$\mathbb{P}^1$-loop space, then all the homotopy sheaves $\ul{\pi}_i(E)$ extend
to \emph{homotopy modules}. Among many other things, this means that they must
be modules over $\ul{\pi}_0(\Omega^\infty S^0) = \ul{GW}$, cf. \cite{morel2004motivic-pi0}.

Hence the starting point of our investigation: if there is any notion of a highly
structured motivic commutative ring spectrum, surely the sphere spectrum must be
an example of such an object. As cited above, $\ul{\pi}_0(S) = \ul{GW}$ is the
sheaf of unramified Grothendieck--Witt groups. Hence if there is to be any hope for
a motivic multiplicative infinite loop space theory of this form, the sheaf of units
$\ul{GW}^\times$ must extend to a homotopy module. In particular, for every
field $k$, the group of units in the Grothendieck--Witt ring of $k$ must be a module
over the Grothendieck--Witt ring itself!

The aim of this article is to show that this is indeed the case, and that in fact
$\ul{GW}^\times$ does extend to a homotopy module. Hence, at least
from this perspective, the existence of a motivic multiplicative infinite loop
space theory is not ruled out.

\paragraph{The $GW$-module structure.}

At the first sight, the claim that $GW^\times(k)$ should be a module over $GW(k)$
may seem preposterous; at least it did so to the author. Here we try to de-mystify
this structure somewhat. First a philosophical remark: one should think of the
$GW$-module structure as arising in essentially the same fashion as the
$\ZZ$-module structure on $\ZZ^\times = \{\pm 1\}$. For this reason, we write
the action of $x \in GW(k)$ on $y \in GW^\times(k)$ as the ``exponentiation''
$y^x$.

Secondly, here are some
formulas. The defining property of this $GW$-module
structure is that for $x \in GW^\times(k)$ and $A/k$ finite étale, we have
$x^{tr(A)} = N_{A/k}(x|_A)$. Here $tr_{A/k}: GW(A) \to GW(k)$ is the \emph{Scharlau
transfer}, $N_{A/k}: GW(A) \to GW(k)$ is the \emph{Rost Norm}
\cite{rost2003multiplicative}, and $tr(A) := tr_{A/k}(1)$.
Suppose that $A = k(\sqrt{a})$. Then one may
check that $tr(A) = \lra{2}(1 + \lra{a})$. From this it follows easily that
elements of the form $tr(A)$ for $[A:k] \le 2$ generate $GW(k)$ as an
abelian group, so we only have to understand these exponents. Fortunately this
situation has been studied thoroughly by Wittkop
\cite{wittkop2006multiplikative}, and the following formula is an immediate
corollary of his work (see Lemma \ref{lemm:wittkop-formula}):
\[ (x + y)^{tr(A)} = x^{tr(A)} + y^{tr(A)} + tr(A)xy. \]
Here $A/k$ is an extension of degree 2. The trivial extension $A = k \times k$
is allowed, in which case $tr(A) = 2 \cdot \lra{2} = 2$ (see e.g. Lemma
\ref{lemm:2-vanishing}) and the formula is familiar.
From this (and $N_{A/k}(0) = 0$) one also obtains
\[ (-1)^{tr(A)} = tr(A) - 1; \]
see Proposition \ref{prop:norm-not-so-basics} part (ii).
Finally one may check the following formula for all $x \in GW(k), a \in k^\times$:
\[ \lra{a}^x = \lra{a}^{dim(x)}. \]

Together these three formulas in principle allow the computation of $x^y$ for
any $x \in GW^\times(k)$ and $y \in GW(k)$. This is illustrated for example in
the proof of Proposition \ref{prop:norm-not-so-basics}.

\paragraph{The logarithm isomorphism.}

A further surprising property is that at least on some part of $GW^\times(k)$,
the multiplicative structures can be made equivalent to the additive ones. Let
us also try to shed some light on that.

The logarithm map furnishes an isomorphism of abelian groups
\[ \log: F_2 GW^\times(k) = (1 + I^2_{tor}(k), \times) \to (I^2_{tor}(k), +). \]
This map satisfies
\[ \log(xy) = \log(x) + \log(y), \text{ for } x, y \in F_2 GW^\times(k), \]
\[ \log(x^z) = z\log(x), \text{ for } z \in GW(k) \]
and
\[ \log(N_{A/k}(w)) = tr_{A/k}(\log(w)), \]
for $A/k$ finite étale and $w \in F_2 GW^\times(A)$. These three properties are
what we mean by turning multiplicative structures into additive ones.

The logarithm map is constructed as follows. Given $x \in F_2 GW^\times(k)$, let $t_1, 
\dots, t_m$ be independent variables. Then consider the element
\[ y := x^{(\lra{t_1} - 1) \dots (\lra{t_m} - 1)} \in GW^\times(k(t_1, \dots, t_m)). \]
It follows from the theory of homotopy modules that $y$ may be written as $y = 1
+ \log_{(m)}(x) (\lra{t_1} - 1) \dots (\lra{t_m} - 1)$ for a unique element
$\log_{(m)}(x) \in I^2_{tor}(k)$; see Lemma \ref{lemm:defn-log-approx}. Then
$\log(x) := \lim_{m \to \infty} \log_{(m)}(x)$. This limit makes sense because
the sequence is eventually constant; see Theorem
\ref{thm:log-module-transfers}. See also Remark \ref{rmk:classical-analysis-log}
for a comparison to the logarithm function in real analysis.

\paragraph{Remark on characteristic 2.}
The theory of Grothendieck--Witt rings in characteristic 2 can be rather
different from the other characteristics. However, when working with
\emph{bilinear} forms, many of these differences disappear. A natural question
is then if our results can be extended to characteristic 2. The main obstruction
to this is the following:
\begin{lemm}
Let $char(k) = 2$ and $l/k$ be a finite separable extension. Then the bilinear
form $tr_{l/k}(1) \in GW(k)$ is isomorphic to the trivial form $[l:k]$.
\end{lemm}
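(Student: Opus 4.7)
The plan is to construct an explicit $k$-basis $e_1, \ldots, e_n$ of $l$ (with $n = [l:k]$) such that $tr_{l/k}(e_i e_j) = \delta_{ij}$. Such a basis shows that the trace form $tr_{l/k}(\langle 1 \rangle)$ is literally represented by the identity matrix, hence equals $n \langle 1 \rangle = [l:k]$ in $GW(k)$.

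The crucial ingredient is the characteristic-$2$ identity
\[ tr_{l/k}(x^2) \;=\; tr_{l/k}(x)^2 \quad (x \in l), \]
which holds because $tr_{l/k}(x) = \sum_\sigma \sigma(x)$ is a sum over $k$-embeddings $\sigma \colon l \hookrightarrow \bar{k}$, and $(a+b)^2 = a^2 + b^2$ in characteristic $2$. This means that imposing $tr_{l/k}(e_i) = 1$ automatically yields $tr_{l/k}(e_i^2) = 1$, so the diagonal entries of the trace-form matrix come for free.

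I would then build the basis by a Gram--Schmidt-style orthogonalization. For the base case, use surjectivity of $tr_{l/k}$ (from separability) to pick $e_1 \in l \setminus k$ with $tr_{l/k}(e_1) = 1$. Inductively, given $e_1, \ldots, e_{j-1}$ with $tr_{l/k}(e_r e_s) = \delta_{rs}$ and $\{1, e_1, \ldots, e_{j-1}\}$ linearly independent in $l$, solve the $j$ linear constraints $tr_{l/k}(e_j) = 1$ and $tr_{l/k}(e_i e_j) = 0$ for $i < j$. Under trace duality $l \xrightarrow{\sim} l^*$, these functionals correspond to the independent vectors $1, e_1, \ldots, e_{j-1} \in l$, so the constraints cut out an affine subspace of $l$ of codimension $j$; one picks $e_j$ lying in it and outside $\operatorname{span}_k(e_1, \ldots, e_{j-1})$.

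The main obstacle is maintaining the linear independence of $\{1, e_1, \ldots, e_j\}$ for the next inductive step. If it fails, pairing $1 = \sum c_i e_i$ against each $e_r$ forces $c_r = tr_{l/k}(e_r) = 1$, so $e_1 + \cdots + e_j = 1$; in that case I would replace $e_j$ by $e_j + h$ for a nonzero $h \in \ker(tr_{l/k}) \cap \bigcap_{i<j} \ker(tr_{l/k}(e_i \cdot -))$, a subspace of dimension $\geq n - j \geq 1$ for $j \leq n-1$. This preserves all orthonormality conditions while restoring linear independence. A possibly slicker alternative is to invoke the classification of non-degenerate symmetric bilinear forms in characteristic $2$ by their quasi-linear part $q_B(x) = B(x,x)$: in a basis with $tr_{l/k}(e_i) = 1$ the identity above gives $q_T = q_{I_n}$, forcing $T \cong I_n$ at once.
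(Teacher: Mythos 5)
Your Gram--Schmidt construction is correct, and it takes a genuinely different route from the paper. The paper argues by d\'evissage: an odd-degree base change (which is injective on $GW$) reduces to the case where $l/k$ is a tower of quadratic extensions, and the degree-$2$ case is a direct computation. You instead build an explicit orthonormal basis for the trace form, the two key inputs being the characteristic-$2$ identity $tr_{l/k}(x^2) = tr_{l/k}(x)^2$ (so that normalizing $tr_{l/k}(e_i)=1$ forces the diagonal entries to be $1$) and non-degeneracy of the trace pairing coming from separability (so the orthogonality constraints at each stage are solvable). Your treatment of the one real pitfall --- keeping $1, e_1, \dots, e_{j-1}$ linearly independent so that the $j$ constraints at stage $j$ cut out a nonempty coset of the $(n-j)$-dimensional orthogonal complement --- is sound: the unique bad solution is $1 + e_1 + \cdots + e_{j-1}$, and for $j \le n-1$ there is room to avoid it, while at $j=n$ independence from $1$ is no longer needed. (Independence of $e_1,\dots,e_j$ themselves is automatic: a relation $e_j = \sum_{i<j} c_i e_i$ together with orthonormality forces all $c_i=0$, contradicting $tr_{l/k}(e_j)=1$.) What your approach buys is a completely self-contained, elementary argument producing an actual isometry; it avoids odd-degree descent for bilinear forms in characteristic $2$ entirely. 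What the paper's approach buys is brevity and consistency with the reduction technique it reuses later (cf.\ Proposition \ref{prop:norm-basics}).

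One warning about your ``possibly slicker alternative'': it is \emph{not} true that non-degenerate symmetric bilinear forms in characteristic $2$ are classified by their quasi-linear part. Over $k = \FF_2(s)$ the forms $\lra{1,s}$ and $\lra{1,1+s}$ have isomorphic quasi-linear parts (via $(x,y)\mapsto (x+y,y)$, since $x^2+sy^2 = (x+y)^2 + (1+s)y^2$), yet their determinants $s$ and $1+s$ differ modulo squares, so they are not isometric. The special statement you actually need --- that $q_T \cong q_{I_n}$ with value set $k^2$ forces $T \cong I_n$ --- is true, but proving it amounts to re-running your orthogonalization, so the alternative saves no work. Keep the main argument.
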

Indeed, recall that we try to define a $GW(k)$-module structure on
$GW(k)^\times$ by requiring that $u^{tr_{l/k}(1)} = N_{l/k}(u|_l)$; the above
Lemma shows that in characteristic 2 this formula does not put any constraints
on a hypothetical module structure at all. Thus our method cannot work in
characteristic 2.
\begin{proof}
If $[l:k]=2$ this is checked by direct computation. In general, after an odd
degree base change, which induces an injection on $GW$,
we may assume that $l/k$ is obtained as a sequence of quadratic extensions; the
result follows. See Proposition \ref{prop:norm-basics} for a more detailed
proof using a similar argument.
\end{proof}

\section{Multiplicative transfers on $GW$}
\label{sec:mult-transfer}

\subsection{Split $K$-groups}
Given a scheme $X$, we have the categories $Vect(X)$ and $Bil(X)$ of
vector bundles on $X$ and vector bundles on $X$ provided with a
bilinear form, respectively. By ``bilinear form'' we shall always mean a symmetric,
non-degenerate bilinear form. In other words an object of $Bil(X)$ is a vector
bundle $E$ together with a homomorphism $b_E: E \otimes E \to \mathcal{O}_X$, such
that (1) $b_E$ is symmetric, i.e. $b_E \circ \tau_E = b_E$, where
$\tau_E: E \otimes E \to E \otimes E$ is the twist isomorphism, and (2) the
homomorphism
$b^\vee_E: E \to E^\vee$ induced by adjunction is an isomorphism. Here $E^\vee$
denotes the dual bundle.

Write $Iso(Vect(X))$ for the set of isomorphism classes of vector
bundles; this is an abelian semi-group. Let $K(Vect(X))^\oplus$ be its
associated Grothendieck group. It is also known as the direct-sum $K$-theory
$K_0^\oplus(X)$ of $X$. In other words $K(Vect(X))^\oplus$ is obtained as the
Grothendieck group of the exact category $Vect(X)$, but where only split exact
sequences are allowed in the exact structure. If $X$ is affine, this coincides with the usual group
$K_0(X)$ (where all exact sequences are allowed in the exact structure),
but for general $X$ it does not. We can do the same with $Bil(X)$: we get the abelian semi-group
$Iso(Bil(X))$, and the associated Grothendieck group $K(Bil(X))^\oplus$ coincides with
the usual Grothendieck--Witt group $GW(X)$ for $X$ affine.

If $f: X \to Y$ is a morphism of schemes, there is the usual pushforward $f_*:
QCoh(X) \to QCoh(Y)$. If $f$ is finite locally free (see e.g. \cite[Tag
02KA]{stacks-project} for a definition) and $V \in QCoh(X)$ is a vector
bundle, then $f_*(V) \in QCoh(Y)$ is also a vector bundle. Thus there is an
induced map $f_*: Iso(Vect(X)) \to Iso(Vect(Y))$. Since $f_*(E \oplus F) \iso
f_*(E) \oplus f_*(F)$, this descends to the Grothendieck group to yield a
push-forward homomorphism $tr_f := f_*: K(Vect(X))^\oplus \to K(Vect(Y))^\oplus$.

If in addition $f$ is étale and $E \in Bil(X)$
then the trace map $f_* \mathcal{O}_X \to \mathcal{O}_Y$ can be used to turn
$f_*E \in Vect(Y)$ into a bilinear bundle. Indeed we let $b_{f_* E}$ be the
composite
\[ f_* E \otimes f_* E \to f_*(E \otimes E) \xrightarrow{f_* b} f_*
\mathcal{O}_X \to \mathcal{O}_Y. \]
Here we have used that $f_*$ is right adjoint to a symmetric monoidal functor,
so is lax symmetric monoidal. Then as before we obtain $tr_f:
K(Bil(X))^\oplus \to K(Bil(Y))^\oplus$.

The tensor product of bundles turns $Iso(Vect(X))$
and $Iso(Bil(X))$ into semi-rings, and $K(Vect(X))^\oplus$ and
$K(Bil(X))^\oplus$ into
rings. However, $tr_f$ is not a ring homomorphism: it does not respect
multiplication.
This can be remedied to some extent by considering a multiplicative version of
transfer. In the next two subsections, we explain two constructions of such
multiplicative transfers. In the last subsection, we compare the two.

\subsection{The Ferrand norm}

Given a finite locally free ring homomorphism $R \to S$, in
\cite{ferrand1998foncteur} D. Ferrand defined a norm functor $N_{S/R}: S\Mod \to
R\Mod$. It is lax symmetric monoidal \cite[(N6)]{ferrand1998foncteur} and so
preserves algebras, modules over algebras, etc.

We briefly review Ferrand's definition. If $M$ is an
$R$-module, we denote by $\ul{M} \in Fun(R\Alg, Sets)$ the functor
$R' \mapsto M \otimes_R R'$.

\begin{defnn}[\cite{ferrand1998foncteur}, 2.2.1]
A \emph{polynomial law} between
$R$-modules $M_1, M_2$ is a natural transformation $\underline{M}_1 \to
\underline{M}_2$.
\end{defnn}

Recall that if $M$ is a (finitely generated) locally free $R$-module and
$\alpha: M \to M$ is an endomorphism, one can define the \emph{determinant}
$det(\alpha) \in R$. Indeed
if $M$ has rank $n$ then the maximal exterior power
$\Lambda^n(\alpha): \Lambda^n M \to \Lambda^n M$ is an
endomorphism of the invertible rank 1 module $\Lambda^n M$, so must be given by
a multiplication by a (unique) element $det(\alpha) \in R$. If $M$ is not of
constant rank then $R = R_1 \times R_2 \times \dots \times R_k$ such that $M =
M_1 \times \dots \times M_k$ with each $M_i$ of constant rank, and we let
$det(\alpha) = (det(\alpha_1), \dots, det(\alpha_k))$. It is easy to see that
this is independent of the decomposition $R = \prod_i R_i$.

Since $S/R$ is finite locally free by assumption, we thus have the \emph{norm map}
$n_{S/R}: S \to R, x \mapsto det(\times
x: S \to S)$. Here $\times x: S \to S$ denotes the endomorphism $s \mapsto xs$.
This defines a polynomial law from $S$ to $R$, because the determinant
commutes with base change.

\begin{defnn}[\cite{ferrand1998foncteur} Definition 3.2.1] \label{defn:norm-law}
If $F \in S\Mod$ and $E \in R\Mod$, a \emph{norm law} from $F$ to $E$ is a
polynomial law $\phi$ from $F$ (viewed as an $R$-module) to $E$ such that for every
$R$-algebra $R'$ and $s \in R' \otimes_R S, x \in R' \otimes_R F$ we have
$\phi(sx) = n_{R' \otimes_R S/R'}(s)\phi(x)$.
\end{defnn}

Given $F \in S\Mod$, there is a universal norm law $n_F: F \to N_{S/R}(F)$,
in the following sense: if $E \in
R\Mod$ and $\phi$ is a norm law from $F$ to $E$, then there is a unique
$R$-homomorphism $u: N_{S/R}(F) \to E$ such that $\phi = u \circ n_F$
\cite[Theorem 3.2.3]{ferrand1998foncteur}. This defines the norm functor.

In geometric language, given a finite locally free morphism $f: X \to Y$ of
affine schemes, we have found a functor $N_f: QCoh(X) \to QCoh(Y)$. Although we
will not need this, let us remark that it is easy to see that the norm
construction can be extended to any finite locally free morphism of schemes, affine or
not.

\subsection{The Rost norm}
Ferrand's norm functor has nice technical properties (like being defined by a
universal property), but it can be difficult to get a
hold of computationally. There is an alternative construction due to Rost
\cite[3.2]{rost2003multiplicative}. Again we review the definition briefly.

If $M$ is any $R$-module, we define the $n$-th \emph{symmetric power} as
\[ \Sym^n_R(M) = (M^{\otimes n})^{\Sigma_n}, \]
where $\Sigma_n$ is the symmetric group on $n$ letters, acting by permuting
the factors of $M^{\otimes n}$, and the superscript means passing to invariants.
Note that if $S$ is an $R$-algebra, then so is $S^{\otimes n}$, and $\Sym^n S
\subset S^{\otimes n}$
is a subalgebra.

Now let $R \to S$ be locally free of rank $n$. Since $\Lambda^n_R S$ is a
quotient of $S^{\otimes n}$ and $\Sym^n_R S$ is a subalgebra of $S^{\otimes n}$,
$\Lambda^n_R S$ is a $\Sym^n_R S$-module. We thus get a canonical homomorphism
\[ \bar{n}_{S/R}: \Sym^n_R S \to End_R(\Lambda^n S) \iso R,\,\, x \mapsto (\times
x:\Lambda^n S \to \Lambda^n S).  \]
Here we have used that $End_R(\Lambda^n S) \iso R$ because $\Lambda^n S$ is an
invertible $R$-module.

\begin{defnn}[\cite{rost2003multiplicative}, 2.3]
Let $M$ be an $S$-module. We put
\[ \nu_{S/R}(M) = \Sym^n_R M \otimes_{\Sym^n_R S} R, \]
where the map $\Sym^n_R S \to R$ is $\bar{n}_{S/R}$ .
For more details, see \cite{krsnik2006multiplikative}.
\end{defnn}

Reverting to geometric language, for a finite étale morphism of constant rank
$f: X \to Y$ between affine schemes, we have found a functor $\nu_f: QCoh(X) \to
QCoh(Y)$. Again, even though we do not need the extra generality, it is easy to
see that this construction extends to any finite étale morphism, not necessarily
of constant rank, and not necessarily between affine schemes.

\subsection{Comparison of norms}

We now show that the constructions from the previous two subsections coincide in
sufficiently good cases.

\begin{prop}
Let $f: X \to Y$ be a finite étale morphism of schemes.
Then for $M \in Vect(X)$ there is a canonical isomorphism
$\nu_f(M) \iso N_f(M)$.
\end{prop}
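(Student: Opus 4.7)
The plan is to use the universal property of Ferrand's norm $N_f$ to construct a canonical comparison map $N_f(M) \to \nu_f(M)$, and then verify that it is an isomorphism by reducing to the case where $f$ is split.

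First I would produce a natural norm law $\phi: M \to \nu_f(M)$. Working affine-locally and splitting off components, we may assume $Y = \mathrm{Spec}\, R$, $X = \mathrm{Spec}\, S$ with $S/R$ finite étale of constant rank $n$. The diagonal map $\delta: M \to M^{\otimes n}$, $m \mapsto m^{\otimes n}$, is a polynomial law of degree $n$ whose image lies in the subalgebra $\Sym^n_R M$; composing with the quotient $\Sym^n_R M \twoheadrightarrow \Sym^n_R M \otimes_{\Sym^n_R S} R = \nu_f(M)$ yields the desired $\phi$. To verify that $\phi$ is a norm law in the sense of Definition \ref{defn:norm-law}, one computes for $s \in S$ and $m \in M$:
\[ \phi(sm) = (sm)^{\otimes n} \otimes 1 = (s^{\otimes n})(m^{\otimes n}) \otimes 1 = m^{\otimes n} \otimes \bar{n}_{S/R}(s^{\otimes n}). \]
Since $s^{\otimes n} \in \Sym^n_R S$ acts on $\Lambda^n_R S$ as the $n$-th exterior power of multiplication by $s$, we have $\bar{n}_{S/R}(s^{\otimes n}) = n_{S/R}(s)$, which gives $\phi(sm) = n_{S/R}(s)\phi(m)$; the analogous identity over any base change $R \to R'$ holds by the same argument. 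By \cite[Theorem 3.2.3]{ferrand1998foncteur}, $\phi$ factors uniquely through the universal norm law, yielding the canonical map $\alpha_M: N_f(M) \to \nu_f(M)$.

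To show $\alpha_M$ is an isomorphism, I would work étale-locally on $Y$. Both $N_f$ and $\nu_f$ commute with flat base change on $Y$: for $\nu_f$ this is immediate from the compatibility of $\Sym^n$ and tensor products with flat base change, and for $N_f$ it is part of the formalism of \cite{ferrand1998foncteur}. The map $\alpha_M$ is natural under such base change, so by faithful flatness we may assume that $f$ is split, i.e.\ $X \iso \coprod_{i=1}^n Y$, in which case the $S$-module $M$ decomposes as an $n$-tuple $(M_1, \dots, M_n)$ of $\mathcal{O}_Y$-modules.

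In this split situation both sides reduce to the tensor product $M_1 \otimes \cdots \otimes M_n$, and it remains to identify $\alpha_M$ with this identification. For $N_f$ this is essentially in \cite{ferrand1998foncteur}. For $\nu_f$ it is an explicit computation: the $\Sigma_n$-orbit of the pure tensor $e_1 \otimes \cdots \otimes e_n \in S^{\otimes n}$ is a distinguished element of $\Sym^n_R S$ on which $\bar{n}_{S/R}$ takes the value $1$, and extending scalars along $\bar{n}_{S/R}$ extracts precisely the ``off-diagonal'' summand $M_1 \otimes \cdots \otimes M_n$ from the decomposition of $\Sym^n_R(\bigoplus M_i)$, the other summands being killed. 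I expect this bookkeeping in the Rost construction to be the main obstacle; once it is done, the composite $M \xrightarrow{\phi} \nu_f(M) \iso M_1 \otimes \cdots \otimes M_n$ is seen to send $(m_1, \dots, m_n)$ to $m_1 \otimes \cdots \otimes m_n$, matching the corresponding formula for Ferrand's norm and forcing $\alpha_M$ to be the identity.
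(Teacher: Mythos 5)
Your argument is correct and follows essentially the same route as the paper: construct the norm law $m \mapsto m^{\otimes n} \otimes 1$ into $\nu_f(M)$, invoke the universal property of Ferrand's norm to get the comparison map, and check it is an isomorphism by faithfully flat descent to the split case where both sides become $M_1 \otimes \cdots \otimes M_n$. The only differences are cosmetic — you verify the identity $\bar{n}_{S/R}(s^{\otimes n}) = n_{S/R}(s)$ and the split-case computation for $\nu_f$ directly, where the paper cites Krsnik and Ferrand.
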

\begin{proof}
Since the isomorphisms are canonical, i.e. compatible with base change in $Y$,
they can be glued in open covers. Thus
we may assume that $Y = Spec(R), X = Spec(S)$ and $R \to S$ is finite étale of
rank $n$. Then the vector bundle $M$ corresponds to a locally free $S$-module which we still
denote by $M$.

There is an evident map
\[ \alpha_M: M \to \nu_{S/R}(M),\,\, m \mapsto (m \otimes m \otimes
\dots \otimes m) \otimes 1. \]
To be clear, this is not a homomorphism.
I claim that it defines a norm law. We first need
to show that it is a polynomial law. 
Hence if $R_1 \to R_2$ is a
homomorphism of $R$-algebras, we need to show that the following square
commutes:
\begin{equation*}
\begin{CD}
M \otimes_R R_1 @>{\alpha_{M \otimes_R R_1}}>> \nu_{S \otimes_R R_1/R_1}(M) \\
@VVV                     @VVV                  \\
M \otimes_R R_2 @>{\alpha_{M \otimes_R R_2}}>> \nu_{S \otimes_R R_2/R_2}(M).
\end{CD}
\end{equation*}
Here the left vertical map is the induced one. We have $\Sym^n_{R'}(M \otimes_R R')
\iso \Sym^n_R(M) \otimes_R R'$ \cite[Korollar 2.3.2]{krsnik2006multiplikative} and
consequently $\nu_{S \otimes_R R'/R'}(M) \iso \nu_{S/R}(M) \otimes_R R'$. The
right vertical map is the one induced by this isomorphism.
Checking
commutativity is then routine. The proof that this is a norm law boils down to
the claim that for $s \in S$ we have $\nu_{S/R}(s \otimes s \otimes \dots
\otimes s) = n_{S/R}(s) \in R$. This
follows from \cite[Korollar 4.1.2]{krsnik2006multiplikative} and
\cite[(N1)]{ferrand1998foncteur}.

By universality of the norm law $M \to N_{S/R}(M)$ there exists a unique
$R$-linear map $\beta_M: N_{S/R}(M) \to \nu_{S/R}(M)$ such that the composite $M \to
N_{S/R}(M) \to \nu_{S/R}(M)$ is $\alpha_M$. We claim that $\beta_M$ is an
isomorphism. To see this, we may perform a faithfully flat base change and
assume that $S \iso R^d$. But then $M \iso \prod_{i=1}^d M_i$ and $N(M) \iso
\bigotimes_i M_i \iso \nu(M)$ \cite[Lemme 3.2.4]{ferrand1998foncteur}
\cite[Satz 4.3.2]{krsnik2006multiplikative} (this is where we need $M$ locally
free). Moreover $n_M: M \to N(M)$ is given by $(m_1, \dots, m_d) \mapsto (m_1
\otimes m_2 \otimes \dots m_d)$, c.f. \cite[Exemple
3.2.2.c)]{ferrand1998foncteur}. It follows from uniqueness of $\beta_M$ that
$\beta_M$ is indeed the canonical isomorphism. This concludes the proof.
\end{proof}

In the last paragraph of the above proof we have also established the following
result:

\begin{prop} \label{prop:norm-fold}
Let $f: X \coprod X \coprod \dots \coprod X \to X$ be the fold map of an
$n$-fold coproduct. Then $N_f: Vect(X \coprod \dots \coprod X) \iso Vect(X)^n
\to Vect(X)$ is given by $(E_1, \dots, E_n) \mapsto E_1 \otimes \dots \otimes
E_n$.
\end{prop}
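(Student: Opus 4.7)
The plan is to observe that this is essentially a direct corollary of the previous proposition's proof, so no genuinely new work is required; what remains is to assemble the cited facts into a clean statement.

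First, I would reduce to the affine case. Since the Ferrand norm is compatible with base change and hence glues in the Zariski topology, and the fold map is itself Zariski-local, it suffices to establish the isomorphism when $X = \operatorname{Spec}(R)$. In that setting, $X \coprod \dots \coprod X$ corresponds to $\operatorname{Spec}(R^n)$ and the fold map $f$ corresponds to the diagonal ring homomorphism $R \to R^n$. Any vector bundle on $X \coprod \dots \coprod X$ decomposes uniquely as an $n$-tuple $(M_1, \dots, M_n)$ of $R$-modules, regarded as an $R^n$-module via componentwise multiplication.

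Next, I would appeal directly to Ferrand's \cite[Lemme 3.2.4]{ferrand1998foncteur}, which computes the Ferrand norm of a product module along a product of rings: it asserts precisely that
\[ N_{R^n/R}(M_1 \times \dots \times M_n) \iso M_1 \otimes_R \dots \otimes_R M_n. \]
This already appeared in the last paragraph of the preceding proof, where the reduction $S \iso R^d$ was made via faithfully flat base change. The present proposition is simply the case where that reduction is the hypothesis itself, with no base change required.

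Consequently the proof is at most a couple of sentences: reduce to affine, invoke the decomposition of modules over a product ring, and quote the Ferrand formula. There is no real obstacle; the only minor point to check is that the isomorphism produced by Ferrand's lemma agrees as stated with the tensor product $E_1 \otimes \dots \otimes E_n$, but this is built into the formula itself and was already used implicitly in the preceding argument.
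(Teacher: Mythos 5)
Your proposal is correct and follows essentially the same route as the paper, which presents the proposition as a byproduct of the last paragraph of the norm-comparison proof: after reducing to the affine split case $S \iso R^d$, one decomposes the module as a product and invokes Ferrand's Lemme 3.2.4 (equivalently Krsnik's Satz 4.3.2) to identify the norm with the tensor product. No gap.
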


If $f: X \to Y$ is finite étale and $E \in Vect(X)$, then $\nu_f(E) \in Vect(Y)$,
as follows for example from \cite[Satz 4.3.2]{krsnik2006multiplikative}. The
functor $\nu_f = N_f: Vect(X) \to Vect(Y)$ is symmetric monoidal \cite[Korollar
4.3.4]{krsnik2006multiplikative}. Since $N_f(\mathcal{O}_X) = \mathcal{O}_Y$ it
follows easily that the functor preserves bilinear bundles, cf. also \cite[Korollar
4.2.7]{krsnik2006multiplikative}. We have thus found $N_f: Iso(Vect(X)) \to
Iso(Vect(Y))$ and $N_f: Iso(Bil(X)) \to Iso(Bil(Y))$, and these are homomorphisms
of \emph{multiplicative} monoids.

\section{$GW$ as a Tambara functor}
\label{sec:tambara}

Recall that if $f: X \to Y$ is any morphism of schemes, then the pullback $f^*:
QCoh(Y) \to QCoh(X)$ induces $f^*: Iso(Vect(Y)) \to Iso(Vect(X))$ and $f^*:
Iso(Bil(Y)) \to Iso(Bil(X))$, and these are homomorphisms of semi-rings, i.e.
respect both the multiplicative and additive structure. Suppose that $f$ is
finite étale. Then as we have seen in the previous section, we can construct $N_f:
Iso(Bil(X)) \to Iso(Bil(Y))$, which is a homomorphism of multiplicative monoids,
and $tr_f: Iso(Bil(X)) \to Iso(Bil(Y))$, which is a homomorphism of additive
monoids. In this section we will explain the compatibilities between these three
operations. It turns out that Tambara
\cite{tambara1993multiplicative} has studied precisely this kind of situation.
As a consequence, we will be able to deduce that $N_f: Iso(Bil(X)) \to
Iso(Bil(Y))$ extends in a canonical way to $N_f: K(Bil(X))^\oplus \to
K(Bil(Y))^\oplus$, and that this extension has many desirable properties, such
as a base change formula. This extension has been produced before in a different way by
M. Rost \cite{rost2003multiplicative}.

We write $\Fet$ for the category of all schemes, with morphisms the finite étale
morphisms. For a scheme $S$, we let $\Fet/S$ denote the usual slice category.
Recall that any morphism between schemes which are finite étale over $S$ is
automatically finite étale, so $\Fet/S$ is also the category of finite étale
schemes over $S$, with all morphisms of schemes allowed.

If $f: X \to Y \in \Fet/S$ is a morphism, then we get as usual a functor $f^*:
\Fet/Y \to \Fet/X$. This functor has a right adjoint $f_*$ which is in fact
isomorphic to Weil restriction along $f$. To see this, it suffices to show that
if $T \in \Fet/X$ then the Weil restriction $R_{X/Y}(T) \in Sch/Y$ is finite étale.
This is clear from
infinitesimal lifting criteria; see \cite[Proposition 7.5.5]{bosch2012neron}.

The following definitions are modeled in an evident way on
\cite[Definition 1.4.1]{mazur2013structure}.

\begin{defnn}
Given morphisms $A \xrightarrow{q} X \xrightarrow{f} Y$ in $\Fet/S$, we can
build the following commutative diagram (in $\Fet/Y$ or $\Fet/S$) called the \emph{exponential diagram generated by $A \to X \to Y$}:
\begin{equation*}
\begin{CD}
X @<q<< A @<e<< f^* f_* A \\
@VfVV   @.      @VpVV     \\
Y  @>{\iso}>>   f_* X  @<{f_*q}<<   f_* A.
\end{CD}
\end{equation*}
Here $f_*$ denotes the functor $f_*: \Fet/X \to \Fet/Y$. Being a right adjoint
it preserves final objects, whence the claimed isomorphism. The left adjoint is
$f^*: \Fet/Y \to \Fet/X$.
The map $e: f^*f_* A \to A$ is the counit of this adjunction.
The functor $f^*$ also has a left adjoint $f_\#$, and $p: f_\# f^* f_* A \to f_*
A$ is the counit of this further adjunction.

As in the exponential diagram above, we usually will not distinguish between
$f_\# T$ and $T$. Indeed $f_\# T$ is just ``$T$ viewed as a $Y$-scheme in the
canonical way'', so this should not cause confusion.
\end{defnn}

\begin{defnn}\label{def:tambara}
A \emph{Tambara functor $F$ over $S$} consists of the following data:
for each $X \in \Fet/S$ a
semi-ring $F(X)$, together with for each $f: X \to Y \in \Fet/S$ three maps
$f^*: F(Y) \to F(X), tr_f: F(X) \to F(Y)$ and $N_f: F(X) \to F(Y)$. This data is
required to satisfy the following conditions:
\begin{enumerate}
\item $F(X \coprod Y) \iso F(X) \times F(Y)$, via the canonical map
\item $f^*$ is a homomorphism, $tr_f$ is a homomorphism of additive monoids,
  and $N_f$ is a homomorphism of multiplicative monoids
\item $f^*, tr_f, N_f$ are functorial in $f$ (i.e. $(fg)^* = g^* f^*$, $\id^* =
\id$, and so on)
\item Transfer and norm commute with base change, in the sense that given a
  cartesian square in $\Fet/S$
\begin{equation*}
\begin{CD}
X @>f>> Y \\
@VgVV @VhVV \\
Z @>k>> W, 
\end{CD}
\end{equation*}
  the following square commutes
\begin{equation*}
\begin{CD}
F(X) @>{tr_f}>> F(Y) \\
@Ag^*AA      @Ah^*AA \\
F(Z) @>{tr_k}>> F(W),
\end{CD}
\end{equation*}
  and similarly with $N$ in place of $tr$.
\item Given morphisms $Z \xrightarrow{q} X \xrightarrow{f} Y$ (in $\Fet/S$), the following
  diagram induced by the associated exponential diagram commutes:
\begin{equation*}
\begin{CD}
F(X) @<tr_q<< F(Z) @>e^*>> F(f^* f_* Z) \\
@VN_fVV   @.              @VN_pVV     \\
F(Y)  @>{\iso}>>   F(f_*X)  @<{tr_{f_*q}}<<   F(f_* Z).
\end{CD}
\end{equation*}
\end{enumerate}
\end{defnn}

\begin{rmk} \label{rmk:tambara-cons}
Conditions 2, 3 and 4 above imply that if $f: X^{\coprod n} \to X$ is the fold map,
then $tr_f: F(X^{\coprod n}) \iso F(X)^n \to F(X)$ is just addition in $F(X)$,
and similarly $N_f$ is multiplication.
\end{rmk}
\begin{rmk} \label{rmk:tambara-defn}
Condition 5 above should be seen as a generalized \emph{distributivity law}.
Indeed if all finite étale morphisms occurring are fold maps (i.e. are split), then this condition
precisely expresses that multiplication distributes over addition in the
semi-ring $F(X)$. As a consequence, we see that a Tambara functor can
alternatively be
defined as a presheaf of sets (not semi-rings) with norms and transfers,
satisfying conditions 1, 3, 4, 5; then there is a unique structure of a
semi-ring on each $F(X)$ such that 2 is satisfied.
We will not use this observation.
\end{rmk}

\begin{prop}
The assignments $X \mapsto Iso(Vect(X))$ and $X \mapsto Iso(Bil(X))$ define
Tambara functors on $S$.
\end{prop}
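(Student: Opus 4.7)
The plan is to verify each of the five axioms of Definition \ref{def:tambara} in turn, handling the first four by direct appeal to the constructions in Section \ref{sec:mult-transfer} and concentrating on axiom 5.

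For axiom 1, the decomposition $Vect(X_1 \coprod X_2) \simeq Vect(X_1) \times Vect(X_2)$ (and its analogue for $Bil$) passes to isomorphism classes and respects direct sum and tensor product. Axiom 2 is already in hand: $f^*$ is a semi-ring homomorphism, $tr_f$ is additive because $f_*$ preserves direct sums, and $N_f$ is multiplicative by the symmetric monoidality of the norm functor established at the end of Section \ref{sec:mult-transfer}. Axiom 3 reduces to $(fg)^* = g^* f^*$, $(fg)_* = f_* g_*$, and the functoriality of $N$; the last follows from the universal property of Ferrand's construction. Axiom 4 is tautological for $f^*$, is classical flat base change for $tr_f$, and for $N_f$ is recorded in the comparison-of-norms proof via $\nu_{S \otimes_R R'/R'}(M) \iso \nu_{S/R}(M) \otimes_R R'$.

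For axiom 5 I would argue by étale descent. First I would construct a canonical natural comparison map between the two composites of the exponential diagram, assembled from the counit $e$, the lax symmetric monoidality of $f_*$ (together with the trace $f_*\mathcal{O}_X \to \mathcal{O}_Y$ in the $Bil$ case), and the universal property of $N$. By axiom 4 this comparison is compatible with étale base change in $Y$; since $Vect$ and $Bil$ form stacks in the étale topology, it is an isomorphism iff it is so after pulling back along some étale surjection $Y' \to Y$ over which $f$, $q$ and $f_* q$ all split as fold maps. Such a $Y'$ exists because any finite étale morphism trivialises over a suitable étale cover, and we may split the three morphisms iteratively.

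In the split case, write $f: X = \coprod_{i=1}^n Y \to Y$ as the fold map and $A = \coprod_i A_i$ with $q_i: A_i \to Y$. Then $f_* A = A_1 \times_Y \dots \times_Y A_n$, the counit $e$ restricts to the $i$-th projection on the $i$-th component, and $p$ is again a fold map. By Proposition \ref{prop:norm-fold} and Remark \ref{rmk:tambara-cons}, $N_f$ and $N_p$ become iterated tensor products while the transfers are iterated pushforwards. The axiom then reduces to the canonical external projection isomorphism
\[ \bigotimes_{i=1}^n tr_{q_i}(M_i) \;\iso\; tr_{f_* q}\bigl(\bigotimes_{i=1}^n \mathrm{pr}_i^* M_i\bigr), \]
which follows by iterated flat base change along the fibre squares $f_* A \to A_i$ combined with the usual projection formula $tr_q(q^* N \otimes M) \iso N \otimes tr_q(M)$. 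For $Bil$ one additionally checks that this isomorphism respects the induced bilinear pairings, which is direct since both pairings are built from the trace $f_*\mathcal{O} \to \mathcal{O}$ and tensor products of the given forms. The main obstacle will be the clean construction of the canonical natural comparison map in the general (unsplit) case; once this is in place, both the descent step and the split-case verification are routine.
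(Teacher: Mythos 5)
Your plan coincides with the paper's proof: axioms 1--4 are dispatched by the same citations, and axiom 5 is handled by first constructing a natural comparison map via the universal property of the norm and then checking that it is an isomorphism after a faithfully flat base change that splits $f$, where everything reduces to a direct tensor-product computation in the spirit of Proposition \ref{prop:norm-fold}. The one step you defer --- the clean construction of the comparison map in the unsplit case --- is exactly where the paper does its real work: setting $M' := M \otimes_C (N_f C \otimes_A B)$, it shows that the composite $M \to M' \xrightarrow{n_p} N_p(M')$ is a \emph{norm law} over $A$, so that universality of $n_f: M \to N_f M$ produces the desired homomorphism $N_f M \to N_p M'$. Verifying the norm-law condition $\phi(bm) = n_f(b)\phi(m)$ boils down to the identity $n_p(e(q(b))) = r(n_f(b))$ between the ring-level norm maps in the exponential diagram, and this is itself checked after a faithfully flat base change splitting $B$ --- so the same splitting trick you invoke for the isomorphism check is also what makes the construction of the map go through.
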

\begin{proof}
We need to verify the axioms. Condition 1 is obvious, condition 2 is obvious for
$f^*$. It holds for $tr_f$ because $f_*$ preserves direct sums, and it holds for $N_f$
because the norm functor is symmetric monoidal, see for example \cite[Satz
4.4.4]{krsnik2006multiplikative}. Alternatively, the argument of Remark
\ref{rmk:tambara-defn} reduces this to Proposition \ref{prop:norm-fold}.
Condition 3 for $f^*$ is well-known, for $tr_f$ it follows from functoriality of
$f_*$, and for $N_f$ it follows from functoriality of the norm \cite[Proposition
3.2.5 b) and c)]{ferrand1998foncteur}. Similarly condition 4 follows from the
base change formula for $f_*$ (well-known) and for the norm \cite[(N2)]{ferrand1998foncteur}.

We thus need to establish condition 5. We do this in some detail; this style of
argument can also be used to make more precise our sketches for conditions 1-4.
Given morphisms $Z \xrightarrow{q} X \xrightarrow{f} Y$ in $\Fet/S$ and $a
\in Iso(Vect(Z))$ (respectively $\bar{a} \in Iso(Bil(Z))$) we need to show that
$N_f tr_q(a) = tr_{f_* q} N_p e^*(a)$, and similarly for $\bar{a}$. In order to
do this, for $E \in Vect(Z)$, we shall (1) exhibit a natural morphism $N_f q_* E \to
(f_* q)_* N_p E$. Natural here means natural in $E$ and also in $Y$, i.e.
compatible with base change. Then we shall (2) verify that this morphism is an
isomorphism, and that if $E \in Bil(Z)$, then the morphism respects the bilinear
structure. Note that problem (1) is Zariski-local on $X$, and problem (2) is
even local in the faithfully flat topology. Consequently to solve (1) we may
assume that $Y$ is affine, say $Y = Spec(A)$, and for (2) we may even assume
that $X = Spec(A^n)$.

\paragraph{Step 1.} We know that
Weil restriction is isomorphic to the norm construction \cite[Proposition
6.2.2]{ferrand1998foncteur}. We are thus given the following diagram of
commutative rings (with all maps finite étale homomorphisms)
\begin{equation*}
\begin{CD}
B @>q>> C @>e>> N_f C \otimes_A B \\
@AfAA  @.        @ApAA            \\
A @=     N_f B   @>r>>    N_fC
\end{CD}
\end{equation*}
together with a locally free module $M$ on $C$, and we need to exhibit a
functorial morphism of $A$-modules
  \[ N_f M \to N_p(M \otimes_C (N_fC \otimes_A B)). \]
Recall that on the level of modules, the pushforward operation just coincides
with forgetting some of the module structure; this is why there are no transfer
operators in the above formula.

We shall construct a norm law $B\Mod \ni M \to N_p(M') \in A\Mod$,
where $M' := M \otimes_C (N_fC \otimes_A
B)$. By universality of the norm law $M \to N_fM$, this induces a
\emph{homomorphism} $N_f M \to N_p M'$ as desired.

To do this,
consider the composite $\phi: M \to M' \to N_p(M')$, where $M \to M'$ is $m
\mapsto m \otimes 1$ and $M' \to N_p(M')$ is the norm map. This defines a
polynomial law over $A$ because the entire diagram is functorial under base
change in $A$, which follows from condition 4 (which we have already
established).
To see that this defines a norm
law (see Definition \ref{defn:norm-law}) we need to check
that for $b \in B$ and $m \in M$ we have
\[ \phi(bm) = n_f(b) \phi(m). \]
Here for any $B$-module $L$, we write $n_f: L \to N_f L$ for the universal norm
law. By definition we have $\phi(m) = n_p(m \otimes 1)$ and hence $\phi(bm) =
n_p(e(q(b))) \cdot \phi(m)$, since $n_p$ is a norm law. We thus need to show that
\[ n_p(e(q(b))) = r(n_f(b)) .\] Since
all our modules are locally free, hence flat, we may check this after any
base change along an injective ring homomorphism, e.g. a faithfully flat one.

We may thus assume that $B = A^d$ and hence that $C = \prod_{i=1}^d C_i.$
Then $N_fC = \bigotimes_{i=1}^d
C_i.$ The canonical map $e: C \to N_f C \otimes_A B = (N_f C)^d$ is given by
\[ (c_1, \dots, c_d) \mapsto (c_1 \otimes 1 \otimes \dots, 1 \otimes c_2 \otimes
  1 \otimes \dots, \dots, 1 \otimes 1 \otimes \dots \otimes 1 \otimes c_d), \]
as one checks by verifying the universal property of a unit of adjunction.
The norm maps are $n_f: B = A^d \to A, (a_1, \dots, a_d) \mapsto a_1 \dots a_d$
and similarly for $n_p$. Then $n_peq = rn_f: B \to N_f C$ follows by direct
computation.

\paragraph{Step 2.}
We remain in the situation above, i.e. we have $A \to B \to C$ with $B = A^d$
and $C = \prod_i C_i$. Moreover we have a $B$-module $M$ and canonical morphism
$N_f M \to N_p M'$. We need to check that this is an isomorphism, respecting
bilinear structures if present.

We first check that the morphism is an
isomorphism. We have $M = \prod_{i=1}^d M_i$, where $M_i$ is a
$C_i$-module. Then $N_f M = \bigotimes_{i=1}^d M_i$, where the tensor product is
over $A$. In contrast, $M' = \prod_{i=1}^d (M_i \otimes_{C_i} N_f C)$, and $N_p
M'$ is the tensor product of these terms over $N_f C$. Since $M_1
\otimes_{C_1} N_f C = M_1 \otimes C_2 \otimes \dots \otimes C_d$ and so on, it
is easy to see by direct computation that $N_f M \to N_p M'$ is an isomorphism.

Similarly, giving a bilinear form $\psi: M \otimes_B M \to B$ is equivalent to
giving $\psi_i: M_i \otimes M_i \to A$. The induced bilinear form on $N_f M =
\bigotimes_i M_i$ is $N_f(\psi)(m_1 \otimes \dots \otimes m_d) = \prod_i
\psi_i(m_i)$. Arguing similarly for $M'$, it is easy to check by direct
computation that $N_f M \to N_p M'$ is compatible with the bilinear forms. This
concludes the proof.
\end{proof}

\begin{rmk}
We can formally invert the sum operation in $Iso(Vect(X))$
and then obtain the Grothendieck ring $K(Vect(X))^\oplus$.
It is a priori not at all clear that the norm map $N_f: Iso(Vect(X))
\to Iso(Vect(Y))$ induces a map $K(Vect(X))^\oplus \to K(Vect(Y))^\oplus$. The main
point of \cite{rost2003multiplicative} is that this indeed works, and the proof
is by showing that the norm maps are \emph{polynomial} (in a sense that is a
priori stronger than the definition we have used so far) and then showing that
polynomial maps descend to Grothendieck groups.

It is also possible to deduce this fact from our proposition. Indeed, any
Tambara functor may be ``additively completed'' (i.e. one may pass to the
Grothendieck ring) \cite[Theorem 6.1]{tambara1993multiplicative}.

The same discussion can be repeated with $Bil(X)$ in place of $Vect(X)$.
\end{rmk}

Using either of the above mentioned results, we obtain the following.

\begin{corr} \label{corr:GW-tambara}
The assignments $X \mapsto K(Vect(X))^\oplus$ and $X \mapsto K(Bil(X))^\oplus$ define
Tambara functors on $S$.
\end{corr}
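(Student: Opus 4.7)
The plan is to combine the previous Proposition with Tambara's additive completion theorem \cite[Theorem 6.1]{tambara1993multiplicative}, exactly as indicated in the preceding remark. By the previous Proposition, $Iso(Vect(-))$ and $Iso(Bil(-))$ are Tambara functors on $S$, valued in semi-rings. The pullback $f^*$ and Scharlau transfer $tr_f$ are already additive monoid homomorphisms, so they extend uniquely to the group completions by the universal property of group completion. The subtle point is that $N_f$ is only a multiplicative monoid homomorphism; a priori there is no reason it should descend to a map on $K(-)^\oplus$ at all, let alone in a way compatible with the distributivity axiom (5) of Definition \ref{def:tambara}.

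First I would invoke Tambara's theorem: it asserts that the forgetful functor from ring-valued to semi-ring-valued Tambara functors admits a left adjoint, obtained by additively completing the underlying semi-ring and extending the norm operations $N_f$ in a canonical (functorial) way so that all five axioms continue to hold. Applying this left adjoint to $Iso(Vect(-))$ and $Iso(Bil(-))$ produces Tambara functors whose underlying ring functors are $K(Vect(-))^\oplus$ and $K(Bil(-))^\oplus$ respectively, which is precisely the corollary.

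The main obstacle is a matter of translation: Tambara originally works in the category of finite $G$-sets for a finite group $G$, whereas we work in $\Fet/S$. However, his construction and proof are entirely formal; the only features of the indexing category they use are the existence of finite coproducts, pullbacks, a right adjoint $f_*$ to $f^*$ along every morphism, and the resulting exponential diagrams. All of these hold in $\Fet/S$ --- as already used to make sense of Definition \ref{def:tambara} --- so Tambara's proof translates verbatim. As a fallback, if one prefers not to rely on this translation, the remark before the corollary sketches an alternative route via Rost \cite{rost2003multiplicative}: one shows directly that the $N_f$ are polynomial maps in a suitably strong sense, and then uses that such polynomial maps between abelian semi-groups canonically descend to their Grothendieck groups. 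Either way, one obtains the required Tambara structures on $K(Vect(-))^\oplus$ and $K(Bil(-))^\oplus$.
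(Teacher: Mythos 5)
Your proposal is correct and follows exactly the route the paper takes: the paper's entire ``proof'' of this corollary is the sentence ``Using either of the above mentioned results, we obtain the following,'' referring to the preceding remark, which offers precisely the two options you describe (Tambara's additive completion \cite[Theorem 6.1]{tambara1993multiplicative} applied to the semi-ring-valued Tambara functors of the previous proposition, or Rost's polynomiality argument). Your additional discussion of why Tambara's formal argument transfers from finite $G$-sets to $\Fet/S$ is a useful elaboration of a point the paper leaves implicit.
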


\section{The $GW$-module structure on $GW^\times$}
\label{sec:GW-module-structure}
In this section we begin in earnest the program sketched in the introduction:
using the multiplicative transfers studied in the previous two sections, we turn
the group of units in $GW(k)^\times$ into a module over $GW(k)$, where $k$ is a
field.

We consider the Tambara functor
$K(Bil(\bullet)) = GW(\bullet)$ on $\Fet/k$. If $A/k$ is a finite étale algebra,
we write $N_{A/k}$ and $tr_{A/k}$ for the multiplicative and additive transfer,
and $x \mapsto x|_A$ for the restriction. We put $tr(A) := tr_{A/k}(1)$.

Recall the dimension homomorphism $dim: GW(A) \to \ZZ^{d}$, where $d$ is the
number of connected components of $Spec(A)$. Its kernel is called the
fundamental ideal and denoted $I(A)$. Note that $I(A \times B) = I(A) \times
I(B)$. Since $GW(A)$ is a ring it has a subset
of units $GW^\times(A)$. This is a group where the operation is multiplication
in the Grothendieck--Witt ring. We put for $n \ge 0$
\[ F_n GW^\times(A) = \left\{ x \in GW^\times(A) \,|\, x \equiv 1 \pmod{I^n(A)} \right\}. \]
In other words, $F_n GW^\times(A) = (1 + I^n(A)) \cap GW^\times(A)$. We will
make good use of the map
\begin{equation} \label{eq:alphan}
  \alpha_n: F_n GW^\times(A) \to I^n(A)/I^{2n}(A),\, x \mapsto x-1.
\end{equation}
Note that it is a homomorphism,
where we use the multiplicative group structure on the left and the additive
structure on the right. In fact $I^n(A)/I^{2n}(A)$ is the largest quotient
$I^n(A)/J$ of
$I^n(A)$ such that the map $(1 + I^n(A), \times) \to (I^n(A)/J, +), x \mapsto
x-1$ is a homomorphism of monoids.

Since we shall use it all the time, let us make explicit the following
well-known fact.

\begin{lemm}[Arason \cite{arason1975cohomologische}, Satz 3.3]
    \label{lemm:transfer-I}
Let $A/k$ be a finite étale algebra. Then $tr_{A/k}(I^n(A)) \subset I^n(k)$.
\end{lemm}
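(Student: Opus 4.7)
I would prove this by induction on $n$, following Arason's argument. The case $n = 0$ is trivial, and for $n = 1$ a dimension count suffices: both $tr_{A/k}(\langle a \rangle)$ and $tr_{A/k}(1)$ have dimension $[A:k]$, so their difference lies in $I(k)$; since $I(A)$ is additively generated by the elements $\langle a \rangle - 1$ with $a \in A^\times$, this handles $n = 1$.

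For the inductive step $n \geq 2$, I would first decompose $A \cong \prod L_i$ into a product of separable field extensions and use additivity of transfer to reduce to the case $A = L$ a separable field extension of $k$. The group $I^n(L)$ is additively generated by the products $(\langle a_1 \rangle - 1)\cdots(\langle a_n \rangle - 1)$ with $a_i \in L^\times$. The key tool is Frobenius reciprocity, $tr_{L/k}(x \cdot y|_L) = tr_{L/k}(x) \cdot y$: whenever some slot $a_i$ lies in $k^\times$, the corresponding factor $\langle a_i \rangle - 1$ is the restriction of an element of $I(k)$, so it pulls out of the trace, and applying the inductive hypothesis to the remaining $(n-1)$-fold product yields an element of $I^{n-1}(k) \cdot I(k) \subseteq I^n(k)$.

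The main obstacle is the case where all the $a_i$ lie in $L \setminus k$. My plan is to use the odd-degree base change trick employed elsewhere in the paper (see the characteristic-$2$ remark preceding Section~\ref{sec:mult-transfer}): pass to the subextension $F$ of a normal closure of $L/k$ fixed by a $2$-Sylow of its Galois group. Then $[F:k]$ is odd, so restriction $GW(k) \hookrightarrow GW(F)$ is injective by Springer's theorem, and it suffices to verify the inclusion after base change to $F$. The algebra $L \otimes_k F$ decomposes as a product of separable extensions of $F$ of $2$-power degree, each of which admits an expression as a tower of quadratic extensions of $F$ ($2$-groups being nilpotent). Transitivity of the trace then reduces the problem inductively to the quadratic case $[L:k] = 2$, where for $L = k(\sqrt d)$ the trace of the generator $(\langle a_1 \rangle - 1)\cdots(\langle a_n \rangle - 1)$ can be computed explicitly by writing each $a_i = x_i + y_i \sqrt{d}$ and reducing to manipulations of Pfister forms over $k$ (essentially the computations underlying Wittkop's thesis, used later in the paper), confirming the result lies in $I^n(k)$.
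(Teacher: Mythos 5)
The paper offers no proof of this statement at all: it is imported verbatim from Arason (Satz 3.3 of \cite{arason1975cohomologische}), so there is no in-paper argument to compare against, and the question is only whether your sketch would stand on its own. It has two genuine gaps. The first is the odd-degree reduction. Injectivity of $GW(k)\to GW(F)$ for $[F:k]$ odd (Springer) only lets you test \emph{equalities} after base change; you need to test membership in $I^n$, i.e.\ you need $I^n(F)\cap GW(k)=I^n(k)$, which is the content of Lemma \ref{lemm:odd-degree-base-change}, not of Springer's theorem. That finer statement is true, but its proof (in this paper and in the standard references) is the transfer argument on the graded pieces $I^j/I^{j+1}$, which requires knowing that $tr_{F/k}$ preserves $I^j$ for all $j\le n$ --- an instance of the very lemma you are proving, applied to the odd-degree extension $F/k$. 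Your induction on $n$ does not break the circle: the top graded piece $I^{n-1}/I^n$ of the descent already needs $tr_{F/k}(I^n(F))\subset I^n(k)$, and your method gives you no independent access to odd-degree extensions. You must either supply a proof of odd-degree descent for the $I$-adic filtration that does not use Arason's theorem, or avoid the reduction altogether.

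The second gap is the quadratic case, which is where all the content sits once the reduction is granted. You defer it to ``manipulations of Pfister forms'' via $a_i=x_i+y_i\sqrt{d}$ and point to Wittkop's computations, but those concern the multiplicative transfer $N_{L/k}$, not the additive one, and a brute-force expansion of $tr_{L/k}\bigl((\langle a_1\rangle-1)\cdots(\langle a_n\rangle-1)\bigr)$ does not obviously land in $I^n(k)$. The standard route here is a structural input such as $I^n(L)=I^{n-1}(k)\cdot I(L)$ for $L/k$ quadratic (Elman--Lam type results), after which Frobenius reciprocity and your $n=1$ case finish immediately: $tr_{L/k}(I^{n-1}(k)\cdot I(L))=I^{n-1}(k)\cdot tr_{L/k}(I(L))\subset I^n(k)$. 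Without an input of that kind the quadratic case is not a routine verification. Your $n\le 1$ cases and the Frobenius-reciprocity observation for slots lying in $k^\times$ are correct, but the latter does not reduce the generic case, so as written the sketch does not constitute a proof.
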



For the purpose of this section, we will always view $I(k)$ as an
ideal of $GW(k)$. We thus put $I^0(k) := GW(k)$, and not $I^0(k) = W(k)$ as may
be more customary.

The following well-known result is very useful for computations.
\begin{lemm}\label{lemm:odd-degree-base-change}
Let $l/k$ be an algebraic field extension of odd degree. In other words $l/k$ is
algebraic, and if $l/l_0/k$
is a subextension with $l_0/k$ finite, then $[l_0:k]$ is odd. For $0 \le n \le
\infty$, the restriction map
\[ I^n(k)/I^m(k) \to I^n(l)/I^m(l) \]
is injective. Here we put $I^\infty(k) := 0$.
\end{lemm}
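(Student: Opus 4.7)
The plan is to reduce to the case $l/k$ finite of odd degree $d$ with $m<\infty$, and then run an induction on $m$ that exploits the projection formula together with Arason's Lemma~\ref{lemm:transfer-I} and the classical fact that multiplication by $2$ raises the $I$-adic filtration by one (so $I^m/I^{m+1}$ is $2$-torsion for $m\ge 1$).

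First I would reduce to $l/k$ finite. Since $GW$ and each $I^n$ commute with filtered colimits of fields, and $l$ is the filtered colimit of its finite subextensions (each of odd degree by hypothesis), any element killed in $I^n(l)/I^m(l)$ is already killed over some finite odd-degree $l_0/k$. The case $m=\infty$ likewise reduces to all finite $m$ via the Arason--Pfister Hauptsatz $\bigcap_m I^m(k)=0$. Having reduced to $l/k$ finite of odd degree $d$ and $m$ finite, I would fix $n$ and induct on $m\ge n$, the base case $m=n$ being vacuous.

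For the inductive step, let $x\in I^n(k)$ with $x|_l\in I^{m+1}(l)$. The inductive hypothesis at level $m$ already forces $x\in I^m(k)$, so the task reduces to: $x\in I^m(k)$ with $x|_l\in I^{m+1}(l)$ implies $x\in I^{m+1}(k)$. The projection formula combined with Arason's Lemma gives
\[ x\cdot tr(l) \;=\; tr_{l/k}(x|_l)\;\in\; I^{m+1}(k). \]
Writing $tr(l)=d+\eta$ with $\eta\in I(k)$ and using $x\eta\in I^m\cdot I\subseteq I^{m+1}$ yields $dx\in I^{m+1}(k)$. Since $d$ is odd and $2I^m(k)\subseteq I^{m+1}(k)$ for $m\ge 1$, the element $(d-1)x$ lies in $I^{m+1}(k)$, and hence $x=dx-(d-1)x\in I^{m+1}(k)$. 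The only case not covered is $m=0$, $n=0$, where the conclusion is immediate because the dimension homomorphism is compatible with restriction.

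I do not expect any serious obstacle. The argument is essentially a standard trace--norm manipulation combined with the well-known behaviour of the $I$-adic filtration under multiplication by $2$; the oddness of $[l:k]$ enters solely to make $d-1$ even, so that multiplication by $d$ becomes the identity on the relevant $2$-torsion subquotients, and the approach visibly requires that hypothesis.
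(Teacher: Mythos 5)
Your proof is correct and follows essentially the same route as the paper's: both reduce to a finite odd-degree extension by continuity (and to finite $m$ via $\bigcap_m I^m=0$), and both exploit that $tr_{l/k}(x|_l)=x\cdot tr_{l/k}(1)$ together with Arason's Lemma \ref{lemm:transfer-I} and the fact that $[l:k]$ is odd, hence acts invertibly on the $2$-torsion subquotients $I^m/I^{m+1}$. Your induction on $m$ is just an unwound version of the paper's observation that a map of finitely filtered groups which is injective on the associated graded pieces is injective.
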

\begin{proof}
In the proof we shall use transfers along finite extensions which are not
separable in general; these are also known as Scharlau transfers. The only
difference is that there need not be a unique transfer, but rather there will be
a family of transfers differing by multiplication by a one-dimensional form. The
choice will not matter to us, since we use only the most basic properties. The
most subtle one is Lemma \ref{lemm:transfer-I}, which holds
in this generality (and is in fact stated in this generality by Arason). In this
text we shall only ever apply Lemma \ref{lemm:odd-degree-base-change} to
separable extensions anyway.

By continuity (see Corollary \ref{corr:app-cont} in Appendix
\ref{sec:app-cont}), we may assume that $l/k$ is a finite extension.

Let us first show that $I^n(k)/I^{n+1}(k) \to I^n(l)/I^{n+1}(l)$ is injective.
Since transfer preserves $I^n$ by Lemma \ref{lemm:transfer-I}, we get a well-defined map $tr:
I^n(l)/I^{n+1}(l) \to I^n(k)/I^{n+1}(k)$. It suffices to show that the composite
$\alpha: I^n(k)/I^{n+1}(k) \to I^n(l)/I^{n+1}(l) \to
I^n(k)/I^{n+1}(k)$ is injective. By the projection formula (see
\cite[Theorem VII.1.3]{lam-quadratic-forms} for the inseparable case), $\alpha$ is given by
multiplication by an element $t \in GW(k)$ of dimension $[l:k]$. In other words
$t \equiv [l:k] \pmod I$ and consequently $\alpha$
is given by multiplication by $[l:k]$. Since $2I(k) \subset I^2(k)$, i.e.
$I^n(k)/I^{n+1}(k)$ is an $\FF_2$-vector space, multiplication by the odd
integer $[l:k]$ is injective.

If $0 \le n \le m < \infty$ then $I^n(k)/I^m(k) \to I^n(l)/I^m(l)$ is a morphism
of finitely filtered abelian groups which is injective on the subquotients (by what we
have just shown), hence injective. The case $m=\infty$ follows from the fact
that $\cap_m I^m(k) = 0$ \cite[Corollary 4.5.7]{scharlau2012quadratic}, or directly
from Springer's theorem \cite[Corollary 2.5.4]{scharlau2012quadratic}.
\end{proof}

By a
\emph{degree 2 extension} $A$ of a field $k$ of characteristic different from
2 we shall always mean a degree 2
étale extension; this is either a quadratic field extension
of $k$, or the extension $A = k \times k$. Such an extension has a canonical
automorphism denoted by $x \mapsto \bar{x}$. If $A/k$ is a quadratic extension
then $x \mapsto \bar{x}$ is the non-trivial Galois automorphism. If $A = k
\times k$ then we put $\overline{(x,y)} := (y, x)$. Then for any $A/k$ of
degree 2, the formulas
$tr_{A/k}(x) = x + \bar{x}$ and $N_{A/k}(x) = x\bar{x}$ are correct for all $x
\in A$, and $k \hookrightarrow{A}$ consists precisely of the invariants of $x
\mapsto \bar{x}$. We then have the following slight extension of a result of
Wittkop we shall use extensively.

\begin{lemm}[Wittkop \cite{wittkop2006multiplikative}]
   \label{lemm:wittkop-formula}
Let $char(k) \ne 2$, $A/k$ of degree 2 and $x, y \in GW(A)$. Then
\[ N_{A/k}(x + y) = N_{A/k}(x) + N_{A/k}(y) + tr_{A/k}(x\bar{y}). \]
\end{lemm}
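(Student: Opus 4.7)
The case where both $x$ and $y$ are classes of actual bilinear forms, i.e., elements of $Iso(Bil(A))$, is Wittkop's original theorem, which we may take as given; the task is to bootstrap the identity to all virtual classes in $GW(A)$. The plan is to recognize both sides of the desired formula as symmetric bilinear maps $GW(A) \times GW(A) \to GW(k)$, and then invoke the fact that $Iso(Bil(A))$ generates $GW(A)$ as an abelian group.

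The right-hand side $(x, y) \mapsto tr_{A/k}(x \bar{y})$ is manifestly bilinear: conjugation $\bar{\cdot}$ is a ring automorphism of $GW(A)$, induced by the non-trivial $k$-algebra automorphism of $A$; multiplication in $GW(A)$ is bilinear; and the transfer $tr_{A/k}$ is additive. Symmetry follows from $tr_{A/k}(\bar{z}) = tr_{A/k}(z)$, which is a consequence of $tr_{A/k}(\bar{a}) = tr_{A/k}(a)$ for $a \in A$. The left-hand side, rewritten as the polarization
\[ B(x, y) := N_{A/k}(x+y) - N_{A/k}(x) - N_{A/k}(y), \]
is bilinear precisely when $N_{A/k} \colon GW(A) \to GW(k)$ is a polynomial map of degree $2$ with $N_{A/k}(0) = 0$. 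This fact is part of Rost's construction \cite{rost2003multiplicative} of the norm on $GW$ and is also implicit in the Tambara functor structure established in Corollary \ref{corr:GW-tambara}. Granting it, the two bilinear maps agree on $Iso(Bil(A)) \times Iso(Bil(A))$ by Wittkop's result, and hence on all of $GW(A) \times GW(A)$.

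The main obstacle is thus establishing bilinearity of $B$ on virtual classes. As a sanity check, one can verify by hand, using Wittkop twice, that $B(\cdot, y)$ is additive on $Iso(Bil(A))$ for fixed $y \in Iso(Bil(A))$ (the cross-terms cancel as in the associativity check for $N(x_1 + x_2 + y)$); but extending this additivity to virtual first arguments, and subsequently to virtual second arguments, is precisely the polynomial-extension phenomenon that underlies Rost's construction of the multiplicative transfer on $GW$ in the first place, and it seems difficult to circumvent.
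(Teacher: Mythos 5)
Your polarization strategy is a genuinely different route from the paper's, and for a quadratic \emph{field} extension it can be made to work: Rost's construction does show that $N_{A/k}$ is polynomial of degree $[A:k]$ on all of $GW(A)$ (this is exactly how the norm descends from $Iso(Bil(A))$ to the Grothendieck group), so the cross-effect $B(x,y)=N_{A/k}(x+y)-N_{A/k}(x)-N_{A/k}(y)$ is bi-additive, and two bi-additive maps that agree on the generating monoid $Iso(Bil(A))$ agree on all of $GW(A)$. That said, the bootstrapping is unnecessary in that case: the paper proves the field case by simply citing Wittkop's Satz 2.5(ii), which is already stated for arbitrary $x,y\in GW(A)$ rather than only for classes of actual forms. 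Your closing paragraph, which presents the bilinearity of $B$ as an obstacle that ``seems difficult to circumvent,'' therefore reads as conceding that the key step is unproven; it is legitimate to cite it from Rost, but you should do so as a clean reference rather than leave it open.

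The genuine gap is the split case $A=k\times k$. In this paper a ``degree 2 extension'' is by convention a degree 2 \emph{étale} algebra, so $A=k\times k$ is allowed, and that case is precisely the ``slight extension of a result of Wittkop'' that the lemma asserts: Wittkop's theorem says nothing about it, so the base case you take as given (``Wittkop's original theorem'') does not exist there, and your bilinear maps have nothing to agree on. The paper handles this case by direct computation: by Proposition \ref{prop:norm-fold} the norm along the fold map is $N(x_1,x_2)=x_1x_2$, the additive transfer is $(x_1,x_2)\mapsto x_1+x_2$ (Remark \ref{rmk:tambara-cons}), and $\overline{(y_1,y_2)}=(y_2,y_1)$, so the identity $N(x+y)=N(x)+N(y)+tr(x\bar{y})$ reduces to the ordinary distributive law $(x_1+y_1)(x_2+y_2)=x_1x_2+y_1y_2+(x_1y_2+x_2y_1)$. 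You would need either to add this computation or to rerun your polarization argument with this explicit identification of $N$ serving as the base case.
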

\begin{proof}
If $A/k$ is quadratic, then this is \cite[Satz
2.5(ii)]{wittkop2006multiplikative}. If $A = k \times k$ then $x = (x_1, x_2), y
= (y_1, y_2)$ and by Proposition \ref{prop:norm-fold} (and its additive
analogue, cf. Remark \ref{rmk:tambara-cons}) we have
\[ N(x + y) = (x_1 + y_1)(x_2 + y_2) = x_1x_2 + y_1y_2 + (x_1 y_2 + x_2 y_1)
            = N(x) + N(y) + tr(x\bar{y}). \]
\end{proof}

The following is a very basic result. Its proof illustrates nicely how to use
Lemmas \ref{lemm:odd-degree-base-change} and \ref{lemm:wittkop-formula}.
We give many details because we shall re-use the technique several times. Note
in particular how testing equalities between elements constructed using norms
and transfers is reduced to the case of degree 2 extensions.

\begin{prop} \label{prop:norm-basics}
Let $A/k$ be a finite étale algebra.
\begin{enumerate}[(i)]
\item If $A/k$ is of degree $p$, we have $N_{A/k}(I^n(A)) \subset I^{np}(k)$.
\item We have $N_{A/k}(F_n GW^\times(A)) \subset F_n GW^\times(k)$.
\item Recall the map $\alpha_n$ from equation
    \eqref{eq:alphan}. In the situation of (ii), moreover the following diagram commutes:
\begin{equation*}
\begin{CD}
F_n GW^\times(A) @>\alpha_n>> I^n(A)/I^{2n}(A)   \\
@VN_{A/k}VV                @Vtr_{A/k}VV  \\
F_n GW^\times(k) @>\alpha_n>> I^n(k)/I^{2n}(k).
\end{CD}
\end{equation*}
\end{enumerate}
\end{prop}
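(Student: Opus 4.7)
\emph{Proof plan.} The plan is to prove (i), (ii), (iii) by simultaneous induction on the degree $p = [A:k]$, reducing to a base case of $p = 2$ handled via Wittkop's formula, with the inductive step using the Tambara structure from Corollary \ref{corr:GW-tambara}.

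For $A/k$ of degree $2$, I would apply Lemma \ref{lemm:wittkop-formula} directly. Since $I^n(A)$ is generated additively by $n$-fold products $\prod_{i=1}^n(\langle a_i\rangle - 1)$ with $a_i \in A^\times$, multiplicativity of $N_{A/k}$ reduces (i) to checking that $N_{A/k}(\langle a\rangle - 1) \in I^2(k)$ for $a \in A^\times$, which is a direct Wittkop computation. For arbitrary sums $x + y$ with $x, y \in I^n(A)$, the cross-term $tr_{A/k}(x\bar y)$ in Wittkop's formula lies in $tr_{A/k}(I^{2n}(A)) \subseteq I^{2n}(k)$ by Lemma \ref{lemm:transfer-I}. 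Parts (ii) and (iii) then fall out by expanding
\[ N_{A/k}(1 + y) = 1 + N_{A/k}(y) + tr_{A/k}(y), \]
using $\bar 1 = 1$ and Galois-invariance of the trace: by (i), $N_{A/k}(y) \in I^{2n}(k)$, so $N_{A/k}(x) \in 1 + I^n(k)$ is automatically a unit (as $N_{A/k}$ preserves units), and $\alpha_n(N_{A/k}(x)) \equiv tr_{A/k}(\alpha_n(x)) \pmod{I^{2n}(k)}$.

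For the inductive step with $p \ge 3$, I would reduce to smaller degrees in two ways. If $A = A_1 \times A_2$ decomposes, functoriality combined with Proposition \ref{prop:norm-fold} (applied to the fold map $k \coprod k \to k$) gives $N_{A/k}(x_1, x_2) = N_{A_1/k}(x_1) \cdot N_{A_2/k}(x_2)$; the induction hypothesis places each factor in $I^{np_i}(k)$, and their product lies in $I^{n(p_1+p_2)}(k) = I^{np}(k)$, with parallel arguments for (ii) and (iii). If $A/k$ is a field of degree $p$ having an odd prime divisor, I would base change along the fixed field $L/k$ of a Sylow $2$-subgroup of the Galois closure of $A$: then $L/k$ has odd degree, $A \otimes_k L$ decomposes over $L$ into factors of strictly smaller degree, and base-change compatibility (axiom 4 of Definition \ref{def:tambara}) reduces the statements to claims about $A \otimes_k L/L$ handled by the previous case; Lemma \ref{lemm:odd-degree-base-change} then allows descent back to $k$. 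Finally, if $[A:k]$ is a $2$-power $\ge 4$, the standard fact that every proper subgroup of a $2$-group is contained in a subgroup of index $2$ yields an intermediate field $k \subsetneq B \subsetneq A$, and $N_{A/k} = N_{B/k} \circ N_{A/B}$ lets us apply induction to the strictly smaller extensions $A/B$ and $B/k$.

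The main obstacle will be the degree $2$ case, where one must be careful that $N_{A/k}(-\langle 1\rangle) = \langle d\rangle$ rather than the naive guess $1$ (for $A = k[\sqrt d]$); this correction is forced by requiring $N_{A/k}(0) = 0$ in Wittkop's formula applied to $0 = \langle 1\rangle + (-\langle 1\rangle)$, and without it the key computation of $N_{A/k}(\langle a\rangle - 1)$ fails to land in $I^2(k)$. The reductions for larger $p$ are by comparison formal consequences of the Tambara machinery established in the previous section.
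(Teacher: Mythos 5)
Your overall strategy — reduce to quadratic extensions via odd-degree base change, products, and transitivity of the norm, then handle the quadratic case with Wittkop's formula — is the same as the paper's, but your final inductive case contains a genuine gap. When $[A:k]$ is a $2$-power $\ge 4$ you invoke the fact that every proper subgroup of a $2$-group is contained in an index-$2$ subgroup to produce an intermediate field $k \subsetneq B \subsetneq A$. That fact applies to subgroups of a $2$-\emph{group}, but the Galois group $G$ of the Galois closure of $A/k$ need not be a $2$-group just because $[A:k] = [G:G_A]$ is a $2$-power: for a quartic field with Galois closure of group $A_4$ (or $S_4$), the stabilizer $G_A$ is a \emph{maximal} subgroup, so there is no intermediate field at all and the factorization $N_{A/k} = N_{B/k}\circ N_{A/B}$ is unavailable. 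The fix is exactly what the paper does: perform the base change to the fixed field of a $2$-Sylow subgroup of the Galois closure (an odd-degree extension, so Lemma \ref{lemm:odd-degree-base-change} still permits descent) for \emph{every} $A$, not only when $[A:k]$ has an odd prime divisor. After that base change the relevant Galois group is a $2$-group, so either $A\otimes_k L$ splits into smaller factors or a chain of quadratic subextensions exists, and your induction goes through.

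A secondary point: your claim that $N_{A/k}(-\langle 1\rangle) = \langle d\rangle$ for $A = k[\sqrt d]$ is not what Wittkop's formula forces. Applying it to $0 = 1 + (-1)$ gives $N_{A/k}(-1) = tr_{A/k}(1) - 1 = \langle 2\rangle + \langle 2d\rangle - 1$, and this equals $\langle d\rangle$ only when the trace form $\langle 2, 2d\rangle$ is isomorphic to $\langle 1, d\rangle$, which fails in general (e.g.\ $\langle 1,5\rangle$ does not represent $2$ over $\QQ$). This does not damage the architecture of your proof, but it is a sign that the computation of $N_{A/k}(\langle a\rangle - 1)$, and more generally of norms of generators of $I^n(A)$, is not a one-line verification; the paper correspondingly defers to specific results of Wittkop (Lemma 1.56 and Satz 2.16) rather than redoing the computation.
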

\begin{proof}
Let $l/k$ be a finite Galois extension such that all residue fields of $A$
embed in $l$, $H$ a 2-Sylow subgroup of $G = Gal(l/k)$
and $k' = {l}^H/k$ the associated field extension. Let $A' := A \otimes_k k'$ denote the
scalar extension. By Corollary \ref{corr:GW-tambara} we have a commutative diagram
\begin{equation*}
\begin{CD}
GW(A) @>>> GW(A') \\
@V{N_{A/k}}VV   @V{N_{A'/k'}}VV \\
GW(k) @>>> GW(k').
\end{CD}
\end{equation*}

For (i), we wish to show that the composite $I^n(A) \to GW(A)
\xrightarrow{N_{A/k}} GW(k) \to
GW(k)/I^{np}(k)$ is zero. We stress that it is not a homomorphism; this will
not matter.
Since $[k':k] = |G/H|$ is odd, by Lemma
\ref{lemm:odd-degree-base-change} we know that $GW(k)/I^{np}(k) \to
GW(k')/I^{np}(k')$ is injective. Hence consulting the commutative diagram, we
find that we may assume (replacing $k$ by $k'$) that $G$ is a 2-group.

I claim that in this case, for any subextension $k \subset l' \subset l$ there exist
intermediate extensions $k = k_0 \subset k_1 \subset \dots \subset k_p = l'$ such that
$[k_{i+1}:k_i] = 2$ for all $i$.
To see this, we may assume by induction that
$k \subset l'$ has no intermediate extensions, i.e. $Gal(l/l') \subset Gal(l/k)$
is a proper maximal subgroup. Any such subgroup is of index $2$
\cite[Theorem 4.6]{rotman2012introduction}, so the claim is proved.

Now $A = l_1 \times \dots \times l_r$, where by construction each $l_i$ embeds
into $l$.
Suppose that we can prove (i) for each of the $l_i/k$. Then if $x = (x_1, \dots,
x_r) \in I^n(A)$ we get
\[ N_{A/k}(x) = \prod_i N_{l_i/k}(x_i) \in \prod_i I^{[l_i:k]n}(k) \subset
I^{np}(k), \]
since $p = \sum_i [l_i:k] = [A:k]$.
We thus need only prove the result for $A=l$ a field. In this case we use the claim to factor $k
\to A=l$ as $k \to k_1 \to \dots \to k_q = l$, with $[k_{i+1}:k_i] = 2$. Using transitivity of the
norm we reduce to the case $[l:k] = 2$.

If $x, y \in I^n(l)$ with $N_{l/k}(x),
N_{l/k}(y) \in I^{2n}(k)$ then $N_{l/k}(x + y) = N_{l/k}(x) + N_{l/k}(y) +
tr_{l/k}(x\bar{y}) \in I^{2n}(k)$, by Lemmas \ref{lemm:wittkop-formula} and
\ref{lemm:transfer-I}.
Note that the conjugation $y \mapsto \bar{y}$ preserves $I^n(l)$.
Moreover in this situation we have $N_{l/k}(-x) = N_{l/k}(-1)N_{l/k}(x)
\in I^{2n}(k)$. It is thus enough to show that $N_{l/k}(x) \in I^{2n}(k)$ for an
additive generating set of elements $x$ of $I^{n}(l)$. This follows from
\cite[Lemma 1.56 and Satz 2.16 (ii),(iv)]{wittkop2006multiplikative}.  
We have thus proved (i).

The proof of (ii) proceeds similarly, using the
composite $F_n GW^\times(A) \xrightarrow{N_{A/k}} GW^\times(k) \xrightarrow{x
\mapsto x-1} GW(k)/I^{2n}(k)$. We may assume that there is a Galois extension
$l/k$ with $Gal(l/k)$ a $2$-group such that each residue field of $A$ embeds
into $l$. We factor $k \to A$ as $k \to k^r \to \prod_i l_i$; this allows us to
reduce to $A$ a field, which using the claim we may assume is of degree $2$ over
$k$, or $A = k^r$. But $N_{k^r/k}(x_1, \dots, x_r) = \prod_i
x_i = N_{k^2/k}(x_1, N_{k^2/k}(\dots N_{k^2/k}(x_{r-1}, x_r) \dots))$, so the
case $A = k^r$ reduces to $A = k^2$. In other words we have reduced to the case
of a degree $2$ extension.

Thus we
need to show that if $A/k$ is of degree $2$ and $1 + x \in F_n GW^\times(A)$,
then $N_{A/k}(1 + x) \in F_n GW^\times(k)$. Since the norm is multiplicative it is
clear that $N(1+x)$ is invertible, so it suffices to show that $N_{A/k}(1+x) \in 1 +
I^n(k)$. But $N_{A/k}(1+x) = 1 + tr_{A/k}(x) + N_{A/k}(x)$ by Lemma \ref{lemm:wittkop-formula}
again, and $tr_{A/k}(x), N_{A/k}(x) \in
I^n(k)$ by Lemma \ref{lemm:transfer-I} and part (i).

To prove part (iii), we may again assume that $A/k$ is of degree 2.
Let $1 + x \in F_n GW^\times(A)$.
Then $N_{A/k}(1+x) = 1 + N_{A/k}(x) + tr_{A/k}(x)$ by Lemma \ref{lemm:wittkop-formula}
once more. Since $N_{A/k}(x) \in I^{2n}(k)$
by (i) we have
\[ \alpha_n(N_{A/k}(1+x)) = [N_{A/k}(x) + tr_{A/k}(x)] = [tr_{A/k}(x)] =
tr_{A/k}(\alpha_n(1+x)), \] which concludes the proof.
\end{proof}

\begin{rmk}
The above proof can be made more uniform by using \emph{locally constant
integers}. Namely, if $p$ is a function on $Spec(A) = \coprod_i
Spec(l_i)$ with
values in $\NN$ (i.e. a non-negative locally constant integer on $Spec(A)$),
then we put $I^p(A) = \prod_i I^{p(Spec(l_i))}(l_i) \subset
GW(A)$. Now suppose that $f: A_1 \to A_2$ is a finite étale morphism and $p$ is a
locally constant integer on $Spec(A_2)$. We define a locally constant integer
$tr_{A_2/A_1}(p)$ on $Spec(A_1)$ by
\[ tr_{A_2/A_1}(p)(x) = \sum_{y \in Spec(f)^{-1}(\{x\})} [k(y):k(x)] p(y). \]
Note that if $A/k$ is of degree $p$ and we view $n \in \ZZ$ as a constant
function on $Spec(A)$, then $tr_{A/k}(n) = np$.

Now suppose that $A_1 \to A_2$ is a finite étale morphism of finite étale
$k$-algebras, and $n$ is a locally constant integer on $Spec(A_2)$. One may show
that then
\[ N_{A_2/A_1}(I^n(A_2)) \subset I^{tr_{A_2/A_1}(n)}(A_1). \]
This statement includes statement (i) above as a special case. It is not
difficult to use the same ideas as in the proof of (ii) to reduce this more
general statement to the case of a degree $2$ extension.  
We chose to give an ad hoc argument to avoid the complication of locally
constant integers. Similar remarks apply at various points in the sequel.
\end{rmk}

\begin{rmk} \label{rmk:degree-zero-Ntr}
For $n = 0$ the statement (iii) of the proposition is not
useful. Instead, I claim that the
following diagram commutes:
\begin{equation*}
\begin{CD}
GW^\times(A) @>{dim}>> \{\pm 1\}^{Spec(A)}   \\
@VN_{A/k}VV                @VNVV  \\
GW^\times(k) @>{dim}>> \{\pm 1\},
\end{CD}
\end{equation*}
where $N$ on the right hand side is defined as follows. Suppose $A = l_1 \times
\dots \times l_r$, with each $l_i$ a field, and $f \in \{\pm 1\}^{Spec(A)}$.
Then $N(f) = \prod_i f(Spec(l_i))^{[l_i:k]}$. Note that $Spec(A) \mapsto
\{\pm 1\}^{Spec(A)}$ is a presheaf, $\{\pm 1\}^{Spec(k)} = \{\pm 1\}$ for any
field $k$, and that the operation $N$ defined above satisfies the base change
formula. In particular, for any field extension $l/k$, the following diagram
commutes:
\begin{equation*}
\begin{CD}
\{\pm 1\}^{Spec(A)} @>>> \{\pm 1\}^{Spec(A \otimes_k l)}   \\
@VNVV                @VNVV  \\
\{\pm 1\}^{Spec(k)} @= \{\pm 1\}^{Spec(l)}.
\end{CD}
\end{equation*}
Since $GW$ also satisfies the base change formula, in order to prove the claim
we may thus assume that $k$ is algebraically
closed. In this case $dim: GW(A) \to \ZZ^{Spec(A)}$ is an isomorphism, and the diagram
commutes by Proposition \ref{prop:norm-fold}.
\end{rmk}

The following observation will allow us to turn the norm maps into a $GW$-module
structure.

\begin{lemm} \label{lemm:main-observation}
Let $A/k, B/k$ be finite étale algebras and $x \in GW(k)$. If $tr(A) = tr(B) \in
GW(k)$ then $N_{A/k}(x|_A) = N_{B/k}(x|_B) \in GW(k)$.
\end{lemm}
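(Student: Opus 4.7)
The plan is to reduce to the degree 2 case via the Sylow trick already employed in the proof of Proposition \ref{prop:norm-basics}, and then to apply Wittkop's formula iteratively.

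First I carry out the Sylow reduction. Choose a finite Galois extension $L/k$ containing all residue fields of $A$ and $B$, let $H$ be a 2-Sylow subgroup of $G := \mathrm{Gal}(L/k)$, and set $k' := L^H$. Since $[k':k]$ is odd, Lemma \ref{lemm:odd-degree-base-change} (applied with $n=0$, $m=\infty$) gives that $GW(k) \to GW(k')$ is injective. As $N$, restriction, and $tr$ all commute with base change in the Tambara functor $GW$ (Corollary \ref{corr:GW-tambara}), it suffices to prove the equality over $k'$. Replacing $k$ by $k'$, I may therefore assume that $\mathrm{Gal}(L/k)$ is a 2-group, so that every subfield of $L$ containing $k$ is obtained from $k$ by a tower of quadratic étale extensions.

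Next I expand via Wittkop. Write $A = \prod_i A_i$ and $B = \prod_j B_j$ with $A_i, B_j$ fields; by multiplicativity $N_{A/k}(x|_A) = \prod_i N_{A_i/k}(x|_{A_i})$ and by additivity $tr(A) = \sum_i tr(A_i)$. Fixing an embedding $A_i \hookrightarrow L$ yields a quadratic tower $k = F_0^{(i)} \subset F_1^{(i)} \subset \dots \subset F_{m_i}^{(i)} = A_i$, and transitivity of $N$ reduces $N_{A_i/k}(x|_{A_i})$ to iterated applications of degree 2 norms. For a degree 2 extension $C/F$ and $y,z \in GW(F)$, the restrictions $y|_C, z|_C$ are fixed by the canonical involution, so combining Lemma \ref{lemm:wittkop-formula} with the projection formula yields the key identity
\[
N_{C/F}((y+z)|_C) = N_{C/F}(y|_C) + N_{C/F}(z|_C) + tr(C/F)\cdot yz.
\]
Starting from the base case of a one-dimensional $y = \lra{a}$, this recursion expresses $N_{C/F}(y|_C)$ as a universal polynomial in $y$, $\dim(y)$, and $tr(C/F) \in GW(F)$.

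Iterating along each tower and then combining across the factors via multiplicativity of $N$ and additivity of $tr$, one obtains a universal expression for $N_{A/k}(x|_A)$ in terms of $x$ and the intermediate trace forms. The main obstacle is the final assembly step: verifying that this expression depends on $A$ only through $tr(A) \in GW(k)$, rather than through the finer data of the tower decomposition. I expect this to be proved by induction on $[A:k]$, using transitivity of the trace $tr_{A_i/k}(1) = tr_{F_1^{(i)}/k}(tr_{F_2^{(i)}/F_1^{(i)}}(\dots))$ together with the projection formula to collapse intermediate trace data into the sum $\sum_i tr(A_i) = tr(A)$, possibly after further odd-degree base changes to force matching tower structures between $A$ and $B$. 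Once the universal expression is seen to depend only on $x$ and $tr(A)$, the lemma follows at once from the hypothesis $tr(A) = tr(B)$.
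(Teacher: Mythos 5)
There is a genuine gap, and it sits exactly where you flag ``the main obstacle'': showing that the iterated Wittkop expression depends on $A$ only through $tr(A) \in GW(k)$. That assertion \emph{is} the lemma; everything before it in your proposal is preparatory bookkeeping. Your fallback plan --- ``further odd-degree base changes to force matching tower structures between $A$ and $B$'' --- cannot work. An odd-degree base change never splits a degree $2^m$ field extension ($m \ge 2$) into smaller pieces: if $[A_i:k]=4$ and $[k'':k]$ is odd, then $A_i \otimes_k k''$ is again a degree $4$ field over $k''$, since $A_i \cap k'' = k$ for degree reasons. More fundamentally, two étale algebras with equal trace forms need not admit comparable quadratic towers at all (a biquadratic field versus a product of two quadratic fields, say), and even in the purely multiquadratic case $A = \prod_i k(\sqrt{a_i})$, $B = \prod_j k(\sqrt{b_j})$, the hypothesis only gives $\sum_i \lra{a_i} = \sum_j \lra{b_j}$ in $GW(k)$, while your expression for $N_{A/k}(x|_A)$ is a product over $i$ whose expansion involves the elementary symmetric functions of the individual $tr(k(\sqrt{a_i}))$; identifying those with functions of $tr(A)$ alone is a nontrivial claim you do not prove. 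For longer towers the intermediate traces live in $GW$ of the intermediate fields, and collapsing them via norms and projection formulas produces data that is not visibly a function of $tr_{A/k}(1)$.

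The paper's proof avoids all of this by a different mechanism: after disposing of the rank via Remark \ref{rmk:degree-zero-Ntr}, it observes that $E \mapsto [N_{E/l}(x|_E)] \in W(l)$ is an invariant of étale algebras of fixed degree in the sense of Garibaldi--Merkurjev--Serre, and invokes their classification (Theorem 29.2 of \emph{Cohomological Invariants in Galois Cohomology}, resting on Serre's splitting principle): every such $W$-valued invariant is a $W(k)$-linear combination of $1, [\lambda^1(tr(E))], \dots, [\lambda^n(tr(E))]$ with coefficients independent of $E$. Since the $\lambda^i$ are well-defined operations on $GW(k)$, equality of trace forms immediately forces equality of the norms. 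If you want to complete your approach you would essentially have to reprove that splitting principle; your Sylow reduction and the degree $2$ recursion are not a substitute for it. (Your single quadratic step is correct and recovers the formula $[N_{E/k}(x|_E)] = [x^2 - 2\lambda^2(x)] + [\lambda^2(x)]\cdot[tr(E)]$ noted in the remark following the lemma, but that is the one case where the dependence on $tr(E)$ alone is visible by inspection.)
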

\begin{proof}
Since $tr(A) = tr(B)$ we must have $[A:k] = [B:k]$. It follows from Remark
\ref{rmk:degree-zero-Ntr} that $dim(N_{A/k}(x|_A)) = dim(x)^{[A:k]} =
dim(N_{B/k}(x|_B))$. It is thus enough to show that $[N_{A/k}(x|_A)] =
[N_{B/k}(x|_B)] \in W(k)$. For this we use Serre's splitting principle. For $l/k$ some field
extension and $E \in Et_n(l)$ an étale algebra of degree $n := [A:k]$, let
$\phi(E) = [N_{E/l}(x|_E)] \in W(l)$. This defines an invariant in the sense of
\cite[Definition 1.1]{garibaldi2003cohomological}, by the base change formula.
It follows from \cite[Theorem 29.2]{garibaldi2003cohomological}
that there exist $x_0, \dots, x_n \in W(k)$ (depending only on $x$,
not on $E$) such that
\[ \phi(E) = x_0 + x_1 [\lambda^1(tr(E))] + \dots + x_n[\lambda^n(tr(E))]. \]
The claim follows.
\end{proof}

\begin{rmk}
If $E/k$ is a quadratic extension, then it follows from \cite[Satz
2.10]{wittkop2006multiplikative} that
\[ [N_{E/k}(x|_E)] = [x^2 - 2\lambda^2(x)] + [\lambda^2(x)] \cdot [tr(E)] \in W(k), \]
where $\lambda^2(x)$ refers to the canonical $\lambda$-ring structure on
$GW(k)$.
In other words, in the last step of the proof, if $n=2$, one may take $x_0 =
[x^2 - 2\lambda^2(x)]$ and $x_1 = [\lambda^2(x)]$ (and $x_2 = 0$).
\end{rmk}

If $l = k(\sqrt{a})$ then $tr(l) = \langle 2 \rangle + \langle 2a \rangle$
\cite[Lemma 2.3 (ii)]{wittkop2006multiplikative}. It follows easily that any
element $y \in GW(k)$ may be written as $tr(A) - tr(B)$ for $A/k, B/k$ finite
étale algebras. Then for $x \in GW^\times(k)$ we put $x^y :=
N_{A/k}(x|_A)/N_{B/k}(x|_B)$. This division is well-defined because the norm preserves
units.

\begin{prop} \label{prop:GW-module-existence-basics}
\begin{enumerate}[(i)]
\item The element $x^y \in GW^\times(k)$ is well-defined, independent of the
  choice of representation $y = tr(A) - tr(B)$.
\item The pairing
  \[ GW(k) \times GW^\times(k) \to GW^\times(k), (y, x) \mapsto x^y \] turns
  $GW^\times(k)$ into a $GW(k)$-module.
\item Each of the subgroups $F_n GW^\times(k) \subset GW^\times(k)$ is a
  $GW(k)$-submodule.
\item The $GW(k)$-module $GW^\times(k)$ satisfies the projection formulas: For
  $A/k$ finite étale, we have
  \[ N_{A/k}(x_1^{y_2|_A}) = N_{A/k}(x_1)^{y_2} \quad\text{ for } 
            x_1 \in GW^\times(A), y_2 \in GW(k) \] and
  \[ N_{A/k}((x_2|_A)^{y_1}) = x_2^{tr_{A/k}(y_1)} \quad\text{ for }
             y_1 \in GW(A),x_2 \in GW^\times(k) . \]
\end{enumerate}
\end{prop}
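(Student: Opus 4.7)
}

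The plan is to handle the four parts in turn; the essential algebraic content is concentrated in part (i), which uses Lemma \ref{lemm:main-observation} (and hence Serre's splitting principle), while parts (ii)--(iv) all reduce to formal manipulation within the Tambara functor structure on $GW$ (Corollary \ref{corr:GW-tambara}), together with the identity $tr(A) \cdot tr(B) = tr(A \times_k B)$ which is an instance of the projection formula for $tr$. For (i), suppose $tr(A) - tr(B) = tr(A') - tr(B')$ in $GW(k)$. Then $tr(A \coprod B') = tr(A' \coprod B)$, since $tr$ is additive on disjoint unions. Lemma \ref{lemm:main-observation} yields
\[ N_{A \coprod B'/k}(x|_{A \coprod B'}) = N_{A' \coprod B/k}(x|_{A' \coprod B}), \]
and Proposition \ref{prop:norm-fold} (equivalently Remark \ref{rmk:tambara-cons}) factors each side as the product of the norms over its two components. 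Rearranging gives $N_{A/k}(x|_A)/N_{B/k}(x|_B) = N_{A'/k}(x|_{A'})/N_{B'/k}(x|_{B'})$, which is the well-definedness. This is the main obstacle in the whole proposition.

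For (ii), distributivity $x^{y_1 + y_2} = x^{y_1} x^{y_2}$ and the identity $x^1 = x$ (using $1 = tr(k)$ and $N_{k/k} = \id$) are immediate from the definition, and multiplicativity $(x_1 x_2)^y = x_1^y x_2^y$ follows from multiplicativity of each $N_{A/k}$ together with distributivity of exponents. For associativity $(x^{y_1})^{y_2} = x^{y_1 y_2}$, I would write each $y_i$ as an integer linear combination of traces (possible since $GW(k)$ is additively generated by traces, as recalled before the proposition) and, via distributivity in the exponent, reduce to the case $y_1 = tr(A)$, $y_2 = tr(B)$. Unfolding the left-hand side and invoking base change $N_{A/k}(x|_A)|_B = N_{(A \times_k B)/B}(x|_{A \times_k B})$ together with functoriality $N_{B/k} \circ N_{(A \times_k B)/B} = N_{(A \times_k B)/k}$ yields $(x^{tr(A)})^{tr(B)} = N_{(A \times_k B)/k}(x|_{A \times_k B})$; the right-hand side equals the same expression because $tr(A) tr(B) = tr(A \times_k B)$. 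Part (iii) is then immediate: $F_n GW^\times(k)$ is a subgroup, and its closure under $x \mapsto N_{A/k}(x|_A)$ is exactly Proposition \ref{prop:norm-basics}(ii).

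For (iv), both formulas are bilinear in their respective ``inner'' variable, so by expanding in traces one reduces to the cases $y_2 = tr(B)$ (in the first formula) and $y_1 = tr_{C/A}(1)$ (in the second). In each reduced case, unfolding the definitions and applying base change and functoriality for $N$ and $tr$ (Tambara axioms 3 and 4) shows the two sides agree, both equaling $N_{(A \times_k B)/k}(x_1|_{A \times_k B})$ in the first case and $N_{C/k}(x_2|_C)$ in the second. Thus, once (i) is in hand, the rest of the proposition is essentially bookkeeping within the Tambara framework on $GW$; the only genuinely algebraic step is Lemma \ref{lemm:main-observation}, on which everything else is built.
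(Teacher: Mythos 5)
Your proposal is correct and follows essentially the same route as the paper: part (i) via $tr(A \times B') = tr(A' \times B)$ plus Lemma \ref{lemm:main-observation} and the splitting of norms over components (which, as in the paper, requires factoring $k \to A \times B$ through $k \to k \times k$ and using transitivity together with Proposition \ref{prop:norm-fold}), and parts (ii)--(iv) by reduction to traces and the Tambara axioms, with $tr(A)tr(B) = tr(A \otimes_k B)$ as the key identity for associativity. No gaps.
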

\begin{proof}
Let us note first that if $A, B$ are finite étale $k$-algebras and $(x, y) \in
GW(A \times B) = GW(A) \times GW(B)$, then $N_{A \times B/k}(x, y) = N_{A/k}(x)
N_{B/k}(y)$. This follows from transitivity of the norm and Proposition
\ref{prop:norm-fold} by factoring $k \to A \times B$ as $k \to k \times k \to A
\times B$ where the first map is the diagonal.

(i) If $tr(A) - tr(B) = tr(A') - tr(B')$ then $tr(A \times B') = tr(A) + tr(B')
= tr(A') + tr(B) = tr(A' \times B)$ and consequently
\[ N_{A/k}(x|_A)N_{B'/k}(x|_{B'}) = N_{A \times B'/k}(x|_{A \times B'})
   = N_{A' \times B/k}(x|_{A' \times B}) = N_{A'/k}(x|_{A'})N_{B/k}(x|_{B}), \]
where for the middle equality we have used Lemma \ref{lemm:main-observation},
and for the outer equalities we use the first paragraph of this proof.
The claim follows upon division by $N_{B'/k}(x|_{B'}) N_{B/k}(x|_{B})$.

(ii) Write $y = tr(A) - tr(B)$. We have
\[ (x_1 x_2)^y = N_{A/k}(x_1x_2)/N_{B/k}(x_1x_2) =
N_{A/k}(x_1)/N_{B/k}(x_1) \cdot N_{A/k}(x_2)/N_{B/k}(x_2) = x_1^y x_2^y, \] by
multiplicativity of the norm. In order to prove that
$x^{y_1 + y_2} = x^{y_1} x^{y_2}$ it is enough to show that if $A/k, B/k$ are
finite étale then $N_{A/k}(x|_A)N_{B/k}(x|_B) = N_{A \times B/k}(x|_{A \times
B})$, since $tr(A \times B) = tr(A) + tr(B)$. This follows from the first
paragraph of this proof. Since $tr(k) = 1$ we find $x^1 = N_{k/k}(x) = x$. Also
$x^0 = 1$ for all $x$, since $0 = tr(\emptyset)$ and the norm is multiplicative. It
remains to show that $x^{yz} = (x^y)^z$. Write $y = tr(A) - tr(B)$ and $z =
tr(A') - tr(B')$. Then
\begin{gather*} x^{yz} = x^{(tr(A) - tr(B))(tr(A')-tr(B'))} = x^{tr(A)tr(A') + tr(B)tr(B') -
tr(A)tr(B') - tr(A')tr(B)} \\ = \frac{x^{tr(A)tr(A')}
                           x^{tr(B)tr(B')}}{x^{tr(A)tr(B')} x^{tr(A')tr(B)}}
\end{gather*}
whereas
\begin{gather*}
   (x^y)^z = (x^{tr(A) - tr(B)})^{tr(A') - tr(B')} = (x^{tr(A)}/x^{tr(B)})^{tr(A') - tr(B')}
           \\= \frac{(x^{tr(A)})^{tr(A')} (x^{tr(B)})^{tr(B')}}{(x^{tr(B)})^{tr(A')} (x^{tr(A)})^{tr(B')}}
,\end{gather*}
by what we have already established.
It is thus enough to show that if
$A/k, B/k$ are finite étale then $x^{tr(A) tr(B)} = (x^{tr(A)})^{tr(B)}$. Note
that $tr(A \otimes_k B) = tr(A) tr(B)$; this follows for example from the
base change formula (for additive transfers). Let $t \in GW(A)$. Then by the base
change formula (see Definition \ref{def:tambara} part (4) and Corollary
\ref{corr:GW-tambara}) we get $(N_{A/k}(t))|_B = N_{A \otimes_k B/B}(t|_{A
\otimes_k B}).$ Substituting $t = x|_A$, applying $N_{B/k}$ and using
transitivity of the norm we get $N_{B/k}((N_{A/k}(x|_A))|_B) = N_{A \otimes_k
B/k}(x|_{A \otimes_k B})$. The left hand side is by definition
$(x^{tr(A)})^{tr(B)}$, whereas the right hand side is by definition $x^{tr(A
\otimes B)}$, which is $x^{tr(A)tr(B)}$. Thus we have established the desired
result.

(iii) It suffices to show that for $A/k$ finite étale we have $N_{A/k}(F_n
GW^\times(k)|_A) \subset F_n GW^\times(k)$. This is immediate from Proposition
\ref{prop:norm-basics} and the fact that restriction preserves the filtration
$F_n$.

(iv) We first establish the second claim. By (i) and (ii), both sides are linear
in $y_1$. We may thus assume that $y_1 = tr_{B/A}(1)$, for some $B/A$ finite étale.
Then $N_{A/k}((x_2|_A)^{y_1}) = N_{A/k}(N_{B/A}(x_2|_B))$ which equals
$N_{B/k}(x_2|_B)$ by transitivity, which is the same as $x_2^{tr_k(B)}$ by
definition. The second claim follows since $tr_k(B) = tr_{A/k}(tr_{B/A}(1))$, by
transitivity of transfer. For the first claim, we may assume that $y_2 = tr(C)$,
with $C/k$ finite étale. Then $y_2|_A = tr_{C \otimes_k A/A}(1)$, by the
base change formula (for additive transfers).
Thus $N_{A/k}(x_1^{y_2|_A}) = N_{A/k}(N_{C \otimes_k
A/A}(x_1|_{C \otimes_k A})).$ By transitivity of the norm, this is the same as
$N_{C \otimes_k A/k}(x_1|_{C \otimes_k A}) = N_{C/k}N_{C \otimes_k A/C}(x_1|_{C
\otimes_k A})$. By using the base change formula again, we deduce that $N_{C
\otimes_k A/C}(x_1|_{C \otimes_k A}) = N_{A/k}(x_1)|_C$. Putting everything
together, we find that \[ N_{A/k}(x_1^{y_2|_A}) = N_{C/k}N_{A/k}(x_1)|_C =
(N_{A/k}(x_1))^{y_2}.\] This was to be shown.
\end{proof}

\begin{prop} \label{prop:norm-not-so-basics}
\begin{enumerate}[(i)]
\item Suppose that $1 + x \in F_n GW^\times(k)$, and $y \in GW(k)$. Then $(1 + x)^y
  \equiv 1 + xy \pmod{I^{2n}(k)}$.
\item If $A/k$ is of degree 2, then $(-1)^{tr(A)} = tr(A) - 1$.
\item If $y \in I(k)$ then $(-1)^y \equiv 1 + y \pmod{I^2}.$
\item For any $n, m \ge 0$ we have
   $(F_n GW^\times(k))^{I^m(k)} \subset F_{n+m} GW^\times(k)$.
\end{enumerate}
\end{prop}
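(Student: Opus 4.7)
The plan is to establish parts (ii), (i), (iii), (iv) in that order, since each will rely on the previous ones. For part (ii), I would apply Lemma \ref{lemm:wittkop-formula} with $x = 1$ and $y = -1$ inside $GW(A)$ for $A/k$ of degree $2$; noting that $N_{A/k}(0) = 0$ (which follows from $N_{A/k}(0) = N_{A/k}(0+0) = 2 N_{A/k}(0)$ via the same lemma), the formula collapses to $0 = 1 + N_{A/k}(-1) - tr(A)$, giving $(-1)^{tr(A)} = N_{A/k}(-1) = tr(A) - 1$.

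For part (i), I would write $y = tr(C) - tr(D)$ with $C/k, D/k$ finite étale, so that $(1+x)^y = N_{C/k}((1+x)|_C) / N_{D/k}((1+x)|_D)$. Proposition \ref{prop:norm-basics}(ii) places both factors in $F_n GW^\times(k)$, and its part (iii) combined with the projection formula for the additive transfer gives $\alpha_n(N_{C/k}((1+x)|_C)) = tr(C) \cdot x$ in $I^n/I^{2n}$, and likewise for $D$. Inverting $1 + tr(D) x$ modulo $I^{2n}$ as $1 - tr(D) x$ and multiplying out the ratio—with the $x^2$ cross-term absorbed since $x^2 \in I^{2n}$—yields $(1+x)^y \equiv 1 + yx \pmod{I^{2n}}$.

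For (iii), my key observation is that $I(k)$ is generated as an abelian group by the elements $tr(A) - 2$ for $A/k$ of degree $2$; this follows from the generation of $GW(k)$ by traces of extensions of degree at most $2$, as noted in the introduction. For such a generator $y = tr(A) - tr(k \times k)$, part (ii) applied to both $A$ and to $k \times k$ (where $tr(k \times k) = 2$) yields the exact equality $(-1)^y = (tr(A) - 1)/(2 - 1) = 1 + y$. To extend to arbitrary $y \in I$, I would first use Remark \ref{rmk:degree-zero-Ntr} to check that $(-1)^{y'} \in F_1$ for every $y' \in I$, and then observe that the congruence $(-1)^{y'} \equiv 1 + y' \pmod{I^2}$ is additive in $y'$, since multiplying two such congruences introduces only a cross-term lying in $I \cdot I = I^2$.

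Part (iv) I would prove by induction on $m$. The base case $m = 1, n \geq 1$ is immediate from (i), since $xy \in I^{n+1}$ and $2n \geq n + 1$; the remaining low case $n = 0, m = 1$ (that is, $(GW^\times)^I \subset F_1$) instead comes from the dimension count of Remark \ref{rmk:degree-zero-Ntr}. For the inductive step at $m \geq 2$, I would reduce by additivity of the exponent to $y = y_1 \cdots y_m$ with each $y_i \in I$, so that the induction hypothesis applied to $y_1 \cdots y_{m-1}$ puts $v := (1+x)^{y_1 \cdots y_{m-1}}$ in $F_{n+m-1}$, and one further application of (i) lands $v^{y_m}$ in $1 + I^{n+m} + I^{2(n+m-1)} \subset 1 + I^{n+m}$. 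The main subtlety to flag is that (i) on its own is too weak when $m > n$—its error term $I^{2n}$ is coarser than what is required—so the induction is genuinely needed, and the exceptional case $(n, m) = (0, 1)$ really does demand a separate dimension argument.
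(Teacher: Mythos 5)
Your proof is correct and follows essentially the same route as the paper's: (ii) via Wittkop's formula applied to $0 = N_{A/k}(1+(-1))$, (i) by reducing to exponents of the form $tr(A)$ using Proposition \ref{prop:norm-basics}(iii) and the projection formula, (iii) by checking the generators $tr(A)-2$ of $I(k)$ and using additivity, and (iv) by induction from the $m=1$ case with the separate dimension argument for $n=0$. The only (trivial) omission is the case $m=0$ of (iv), where $I^0(k)=GW(k)$ by the paper's convention and the claim is exactly Proposition \ref{prop:GW-module-existence-basics}(iii).
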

\begin{proof}
(i) Recall the map $\alpha_n: F_n GW^\times \to I^n/I^{2n}, x \mapsto x-1$ from equation
\eqref{eq:alphan}. What we are trying to show is equivalent to
$\alpha_n((1+x)^y) = \alpha_n(1+x)y$, i.e. that $\alpha_n$ is a $GW(k)$-module
homomorphism. This may be checked on an additive set of generators of $GW(k)$,
since we already know that $\alpha_n$ is a homomorphism of abelian groups.
Thus we may assume that $y = tr(A)$ for $A/k$ finite étale. In this case the
claim is immediate from
Proposition \ref{prop:norm-basics} part (iii) and the projection formula for
additive transfers.

(ii) Using Lemma \ref{lemm:wittkop-formula}, we compute \[ 0 = N_{A/k}(0) = N_{A/k}(1 +
(-1)) = 1 + N_{A/k}(-1) - tr_{A/k}(1).\] The result follows by rearranging.

(iii) Let $y \in GW(k)$. It follows from Remark \ref{rmk:degree-zero-Ntr} that we have
$dim((-1)^y) = (-1)^{dim(y)}$. Hence if $y \in I(k)$ then $dim((-1)^y) = 1$
and so $(-1)^y \in F_1 GW^\times(k)$. We now have the two maps $f, g: I(k) \to I(k)/I(k)^2$ given by
$f(y) = [y]$ and $g(y) = \alpha_1((-1)^y)$, and we wish to show that they are
equal. Both are group homomorphisms (the second on by Proposition
\ref{prop:GW-module-existence-basics}(ii)), so we need only check this on generators.
Generators of $I(k)$ are given by $tr(A) - 2$ for $A/k$ degree 2. Indeed we
know that any element of $GW(k)$ can be written as
\[ x = \sum_{i=0}^{n_1} tr(A_i) - \sum_{j=0}^{n_2} tr(B_j) + c \]
for degree 2 extensions $A_i, B_j/k$ and $c \in \{0, 1\}$. Then $dim(x) = 2(n_1-n_2) +
c = 0$ if and only if $c=0$ and $n_1 = n_2 =: n$. In this case we have
\[ x = \sum_{i=0}^n\left[(tr(A_i) - 2) - (tr(B_i) - 2)\right]. \]
Now if $A/k$ is degree $2$, then by (ii) we have $(-1)^{tr(A)-2} = (-1)^{tr(A)} = tr(A) - 1 = 1 + (tr(A)-2)$.
The claim follows.

(iv) The case $m = 0$ is Proposition \ref{prop:GW-module-existence-basics} part
(iii). The case $m > 1$ follows from $m=1$ and induction. So suppose $m=1$; i.e.
we need to show that $(F_nGW(k)^\times)^{I(k)} \subset F_{n+1} GW^\times(k)$.
For $n \ge 1$ this is immediate from (i). We now deal with $n=0$.
Thus let $x \in GW^\times(k)$. Then $dim(x) \in \{\pm 1\}$. Note that $x \in
F_1GW^\times(k)$ if and only if $dim(x) = 1$. Let $y \in I(k)$. Then $dim(y)=0$,
whence $dim(x^y) = dim(x)^y = 1$ as a consequence of Remark \ref{rmk:degree-zero-Ntr}. It follows
that $x^y \in F_1GW^\times(k)$.  This concludes the proof.
\end{proof}

\begin{rmk} \label{rmk:sqrt-2}
It follows from part (iv) of Proposition \ref{prop:norm-not-so-basics} that
$(-1)^{I^2(k)} \subset F_2 GW^\times(k)$.
If $\sqrt{2} \in k$ then one may show that actually $(-1)^{I^2(k)} = 1$. This
follows from Lemma \ref{lemm:identify-1-exp} and Theorem
\ref{thm:log-module-transfers}. In fact the proof of Lemma
\ref{lemm:identify-1-exp} shows that $(-1)^{(\lra{a} - 1)(\lra{b} - 1)} = 1 +
(\lra{2} - 1)(\lra{a} - 1)(\lra{b} - 1)$, from which we also see that
$(-1)^{I^2(k)} \ne 1$ in general.
\end{rmk}

We can use the results of this section to give a presentation of
$GW^\times(k)$.

\begin{prop} \label{prop:GW-times-presentation}
The $GW(k)$-module $GW^\times(k)$ is generated by $F_2 GW^\times(k)$ and $-1$.
Moreover the following sequence is exact:
\[ I^2(k) \xrightarrow{p \oplus q} GW(k)/2 \oplus F_2 GW^\times(k) \xrightarrow{r/s}
      GW^\times(k) \to 1 \]
Here $p: I^2(k) \hookrightarrow GW(k) \to  GW(k)/2$ is the canonical map,
$q(y) = (-1)^y$, $r(x) = (-1)^x$, $s: F_2 GW^\times(k) \hookrightarrow
GW^\times(k)$ is the canonical inclusion, and $(r/s)(x) := r(x)/s(x)$
\end{prop}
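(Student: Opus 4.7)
The plan is to establish surjectivity of $r/s$ (which immediately yields the generation statement, since $(r/s)(a,b) = (-1)^a b^{-1}$ belongs to the $GW(k)$-submodule generated by $-1$ and $F_2 GW^\times(k)$), and then to check that the kernel of $r/s$ coincides with the image of $p \oplus q$.

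For surjectivity, fix $y \in GW^\times(k)$ and set $\epsilon := \dim(y) \in \{\pm 1\}$. By Remark \ref{rmk:degree-zero-Ntr} one has $\dim((-1)^{n \cdot 1}) = (-1)^n$, so I choose $n \in \{0,1\}$ with $(-1)^n = \epsilon$; then $y' := \epsilon \cdot y \in F_1 GW^\times(k)$. By Proposition \ref{prop:norm-not-so-basics}(iii), the assignment $a_1 \mapsto (-1)^{a_1}$ maps $I(k)$ into $F_1 GW^\times(k)$ with $\alpha_1((-1)^{a_1}) = [a_1] \in I(k)/I^2(k)$, so I pick $a_1 \in I(k)$ with $[a_1] = \alpha_1(y')$. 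Put $a := n \cdot 1 + a_1 \in GW(k)$ and $b := (-1)^a \cdot y^{-1}$. Using $(-1)^n = \epsilon$ and $\epsilon^2 = 1$, a direct computation simplifies $b$ to $(-1)^{a_1} (y')^{-1} \in F_1 GW^\times(k)$, and then $\alpha_1(b) = [a_1] - \alpha_1(y') = 0$, so $b \in F_2 GW^\times(k)$. Hence $(r/s)(a,b) = (-1)^a / b = y$.

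For exactness in the middle, first note that $(r/s) \circ (p \oplus q)$ is trivial: if $y \in I^2(k)$, then $(-1)^a$ depends only on $a$ modulo $2GW(k)$ (since $(-1)^2 = 1$ in $GW^\times(k)$), hence $(-1)^{[y]} = (-1)^y$ and the composite sends $y$ to $(-1)^y/(-1)^y = 1$. Conversely, suppose $(-1)^a = b$ with $b \in F_2 GW^\times(k)$. Since $\dim b = 1$, Remark \ref{rmk:degree-zero-Ntr} forces $\dim a$ to be even, say $\dim a = 2m$; replacing $a$ by $a - 2m \cdot 1$ alters neither the class in $GW(k)/2$ nor the value $(-1)^a$ (as $2m \cdot 1 \in 2 GW(k)$), so I may assume $a \in I(k)$. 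Then Proposition \ref{prop:norm-not-so-basics}(iii) yields $\alpha_1((-1)^a) = [a]$, and the hypothesis $b \in F_2$ forces $[a] = 0$ in $I(k)/I^2(k)$, i.e. $a \in I^2(k)$. This $a$ supplies the required preimage: $(p \oplus q)(a) = ([a], (-1)^a) = (a, b)$.

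Beyond careful dimension bookkeeping and two appeals to Proposition \ref{prop:norm-not-so-basics}(iii), no step presents a serious obstacle. The one conceptual point worth flagging is that $(-1)^{\bullet}$ factors through $GW(k) \twoheadrightarrow GW(k)/2$, which is exactly what makes $GW(k)/2$ the correct receptacle in the first summand of the middle term of the sequence and which underlies the ``trivial composition'' check above.
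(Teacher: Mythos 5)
Your proof is correct and follows essentially the same route as the paper's: surjectivity by first matching dimensions with a power of $-1$ and then correcting the $I/I^2$-class via Proposition \ref{prop:norm-not-so-basics}(iii), and exactness in the middle by the same dimension-parity reduction to $a \in I(k)$ followed by another appeal to part (iii). The only (harmless) differences are notational, e.g.\ your explicit verification that $q(a) = b$ in the second coordinate, which the paper leaves implicit since $(-1)^{\bullet}$ factors through $GW(k)/2$.
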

\begin{proof}
Note that $(-1)^2 = 1$ so $r$ is well-defined. Moreover if $y \in I^2(k)$ then
$(-1)^y \in F_2 GW^\times(k)$ by Proposition \ref{prop:norm-not-so-basics} part
(iv) (or (iii)), so $q$ is well-defined.

To show the claim about generation, or equivalently surjectivity of $r/s$, it
suffices to show that any $x \in GW^\times(k)$ can be written as $(-1)^y z$,
with $y \in GW(k)$ and $z \in F_2 GW^\times(k)$. Certainly $dim(x)= \pm 1$, so
$x = (-1)^n z_1$ for some $n \in \ZZ$ and $z_1 \in F_1 GW^\times(k)$. Now
$\alpha_1(z_1) \in I(k)/I^2(k)$. Pick $t \in I(k)$ with $[t] = -\alpha_1(z_1)$.
Then $\alpha_1[(-1)^{t} z_1] = 0$ by Proposition
\ref{prop:norm-not-so-basics} part (iii), and so $(-1)^{t} z_1\in F_2 GW^\times(k)$.
Consequently $x = (-1)^{n - t}((-1)^t z_1)$
is of the required form.

It remains to verify exactness in the middle. It is clear that the composite of
the two maps is 0. Now let $x \in GW(k)/2$. It suffices
to show that $(-1)^x \in F_2 GW^\times(k)$ only if $x$ is in the image of
$p$. Hence suppose that $(-1)^x \in F_2 GW^\times(k)$. Then $1 = dim((-1)^x) =
(-1)^{dim(x)}$ and so $dim(x)$ is even, whence we may assume that $x \in I$. Now
$0 = \alpha_1((-1)^x) \equiv x \pmod{I^2}$ by Proposition
\ref{prop:norm-not-so-basics} part (iii) again, and so $x \in I^2$. This
concludes the proof.
\end{proof}

\begin{rmk}
For a more optimal form of this proposition, see Theorem \ref{thm:final}.
\end{rmk}

\section{The sheaf $\ul{GW}^\times$ and the homotopy module $F_*$}
\label{sec:sheaf-GW-times}
Throughout this section, unless stated otherwise, the field $k$ is assumed perfect. As always, we assume
that $char(k) \ne 2$. We will use various results about strictly homotopy
invariant sheaves and homotopy modules. Confer Appendix  \ref{sec:app-htpy-mod}
for some recollections on this material.

Recall that the construction $X \mapsto GW(X)$ defines a presheaf on the
category of schemes. In the previous section, we have studied its restriction to
the subcategory $\Fet/k$ of finite étale $k$-schemes. From now on, we will study
it on all of $Sm(k)$, the category of \emph{smooth} (separated) $k$-schemes.
The associated sheaf (in the Nisnevich or Zariski topology) is denoted
$\ul{GW}$,
is called the sheaf of unramified Grothendieck--Witt groups, and is strictly
homotopy invariant. Recall that a sheaf $F$ on $Sm(k)$ is called strictly
homotopy invariant if the canonical map $H^p_{Nis}(X, F) \to H^p_{Nis}(X \times \Aone, F)$
is an isomorphism for all $X \in Sm(k)$ and all $p \ge 0$.
The sheaf $\ul{GW}$ coincides with the sheaf constructed by Morel \cite[Section
3.2]{A1-alg-top} \cite[Theorem A]{panin-witt-purity}.  Note also
that $\ul{GW}|_{\Fet/k} = GW|_{\Fet/k}$, since all finite étale $k$-schemes are
(finite) disjoint unions of Nisnevich local schemes. Since $\ul{GW}$
is a sheaf of rings, it has a subsheaf of units which we denote
$\ul{GW}^\times$.
This
is the sheaf associated with $Sm(k) \ni X \mapsto GW(X)^\times$. Our first task
is to prove that $\ul{GW}^\times$ is also strictly homotopy invariant.

In order to do this, we recall that there are the sheaves of ideals $\ul{I}^n
\subset \ul{GW}$. We define a filtration of $\ul{GW}^\times$ via $F_n
\ul{GW}^\times (X) = (1 + \ul{I}^n(X)) \cap \ul{GW}^\times(X)$. As before we get
homomorphisms $\alpha_n: F_n \ul{GW}^\times \to \ul{I}^n/\ul{I}^{2n}$, where on
the right hand side we mean the quotient taken in the category of Nisnevich
sheaves.

If $F$ is any
(pre)sheaf on $Sm(k)$ we write $F_{tor}$ for the (pre)sheaf $F_{tor}(X) =
F(X)_{tor}$, where for an abelian group $A$ we write $A_{tor}$ for the
subgroup of torsion elements. It is strictly homotopy invariant if $F$ is. This follows from the
fact that the category of strictly homotopy invariant sheaves is abelian and
closed under filtered colimits.

\begin{lemm} \label{lemm:Fn-tors}
If $n \ge 2$ then $F_n \ul{GW}^\times = 1 + \ul{I}^n_{tor}$, where the
identification holds as sub-presheaves of $\ul{GW}$.
\end{lemm}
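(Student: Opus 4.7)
The plan is to prove both inclusions by reducing to the case where the base is a field, and on a field to use signatures together with Pfister's theorems on torsion in $W(K)$.

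\textbf{Field case.} For a field $K$ of characteristic not $2$ and $n \ge 2$, I would first establish $(1 + I^n(K)) \cap GW^\times(K) = 1 + I^n_{tor}(K)$. For $(\supseteq)$, any $\alpha \in I^n_{tor}(K)$ is nilpotent, by the fact that $W(K)_{tor}$ is a nil ideal (Pfister), so $1 + \alpha$ has as inverse a truncated geometric series. For $(\subseteq)$, if $1 + \alpha$ is a unit with $\alpha \in I^n(K)$, then for every ordering $\sigma$ of $K$ one has $\operatorname{sgn}_\sigma(1 + \alpha) \in \{\pm 1\}$, forcing $\operatorname{sgn}_\sigma(\alpha) \in \{0, -2\}$; but $\operatorname{sgn}_\sigma(I^n) \subseteq 2^n\ZZ \subseteq 4\ZZ$, so $\operatorname{sgn}_\sigma(\alpha) = 0$ for every $\sigma$, and Pfister's local--global principle then gives $\alpha \in W(K)_{tor}$, hence $\alpha \in I^n_{tor}(K)$. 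The hypothesis $n \ge 2$ is essential here, precisely to rule out the signature value $-2$.

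\textbf{Globalisation.} For a general $X \in Sm(k)$, write $X = \coprod_i X_i$ with each $X_i$ irreducible (smooth schemes have disjoint irreducible components), with function field $K_i$, and note there are finitely many such components. Unramifiedness of $\ul{GW}$ gives an injection $\ul{GW}(X) \hookrightarrow \prod_i GW(K_i)$, and both $\ul{I}^n(X)$ and $\ul{GW}_{tor}(X)$ are characterised by their images in this product. For $(\supseteq)$: given $\alpha \in \ul{I}^n_{tor}(X)$, each $\alpha|_{K_i}$ is nilpotent by the field case, and since there are only finitely many $i$ a common exponent $m$ works; injectivity then yields $\alpha^m = 0$ globally, so $1 + \alpha \in \ul{GW}^\times(X)$ and lands in $F_n\ul{GW}^\times(X)$. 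For $(\subseteq)$: if $1 + \alpha \in F_n\ul{GW}^\times(X)$ then each $1 + \alpha|_{K_i}$ is a unit in $GW(K_i)$, so $\alpha|_{K_i} \in I^n_{tor}(K_i)$ by the field case; a common annihilator $N$ kills all the $\alpha|_{K_i}$, and injectivity propagates this to $N \alpha = 0$ in $\ul{GW}(X)$, placing $\alpha$ in $\ul{I}^n_{tor}(X)$.

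The main obstacle I anticipate is the interplay between being a unit globally versus at generic points; this is mitigated because the definition of $F_n\ul{GW}^\times(X)$ already requires $1 + \alpha$ to be a global unit, and conversely one only needs the much weaker consequence ``$\alpha|_{K_i}$ is torsion'' to transport back via injectivity. The essential non-trivial inputs are Pfister's nilpotence of torsion in $W(K)$ and his local--global principle, together with the unramifiedness of $\ul{GW}$ supplying the embedding into a product of Grothendieck--Witt rings of function fields.
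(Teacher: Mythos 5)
Your argument is correct and follows essentially the same route as the paper: reduce to the generic points via unramifiedness of $\ul{GW}$, then at fields combine the signature argument ($\sigma(1+\alpha)=\pm 1$ together with $\sigma(I^n)\subseteq 2^n\ZZ$ and $n\ge 2$ forces $\sigma(\alpha)=0$) with the Pfister/Milnor identification of torsion and nilpotent elements. One small imprecision: $W(K)_{tor}$ is \emph{not} a nil ideal when $K$ is not formally real (e.g. $W(\FF_3)\cong\ZZ/4$ is all torsion, yet $1$ is not nilpotent); what you actually need, and what is true, is that torsion elements of the fundamental ideal $I(K)$ --- equivalently torsion elements of $GW(K)$ --- are nilpotent, which is the statement the paper extracts from Milnor--Husemoller III.3.6 and III.3.8, and which applies here since your $\alpha$ lies in $I^n$ with $n\ge 2$.
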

\begin{proof}
Let $1 + x \in 1 + \ul{I}^n(X) \subset \ul{GW}(X)$. We need to show that
$1+x \in \ul{GW}^\times(X)$ if and only if $x$ is torsion. I claim that $x$ is
torsion if and only if it is nilpotent. Indeed since $\ul{GW}$ is strictly
homotopy invariant it is unramified \cite[Lemma 6.4.4]{morel2005stable}, and thus it suffices to prove
the claim for $GW(L)$ with $L$ a field, where it follows from
\cite[Theorems III.3.6 and III.3.8]{milnor1973symmetric}. We thus need to show
that $1+x \in \ul{GW}(X)$ is invertible if and only if $x$ is nilpotent.
Certainly if $x$ is nilpotent then $1+x$ is invertible. Conversely, if $1+x$ is
invertible then so is its image in $GW(L)$ for any field $L$, and then by
unramifiedness of $\ul{GW}$ again it suffices to prove: if $1+x \in GW(L)$ is
invertible with $x \in I^n(L)$ and $n \ge 2$, then $x$ is nilpotent (or
equivalently, torsion).  
Let $\sigma: GW(L) \to \ZZ$ be a signature map. By \cite[Theorems III.3.6 and
III.3.8]{milnor1973symmetric} again it suffices to show that $\sigma(x) = 0$.
But $\sigma(I) \subset 2\ZZ$ and hence $\sigma(x) \in 2^n \ZZ$, whereas also
$\sigma(1 + x) = 1 + \sigma(x) \in \ZZ^\times = \{\pm 1\}$. As $n \ge 2$ this
implies that $\sigma(x) = 0$, as was to be shown.
\end{proof}

\begin{lemm} \label{lemm:GW-times-subquotients}
We have
\[ \ul{GW}^\times / F_1 \ul{GW}^\times \iso \ZZ/2 \iso \ul{k}_0^M, \]
induced by $\dim: \ul{GW}^\times \to \ZZ$ (here by $\ZZ$ and $\ZZ/2$ we also
denote the associated constant sheaves),
\[ F_1 \ul{GW}^\times / F_2 \ul{GW}^\times \iso \Gm/2 \iso \ul{k}_1^M \iso
\ul{I}/\ul{I}^2, \]
induced by $\alpha_1$, and for $n
\ge 2$ we have
\[ F_n \ul{GW}^\times / F_{n+1} \ul{GW}^\times \iso
\ul{I}^n_{tor}/\ul{I}^{n+1}_{tor} \hookrightarrow \ul{I}^n/\ul{I}^{n+1}, \]
induced by $\alpha_n$. In
particular all of the subquotients of the filtration are strictly homotopy
invariant.
\end{lemm}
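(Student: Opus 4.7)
The plan is to establish each of the three subquotient identifications in turn, in each case identifying the natural map, determining its kernel as the next filtration stage, and verifying surjectivity by exhibiting enough sections of the target.

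For the top piece, $\dim : \ul{GW}^\times \to \ZZ^\times = \{\pm 1\}$ has kernel $F_1 \ul{GW}^\times$ by definition of the fundamental ideal, and is surjective since $-1 \in \ul{GW}^\times$. For the middle piece, $\alpha_1 : F_1 \ul{GW}^\times \to \ul{I}/\ul{I}^2$ has kernel $F_2 \ul{GW}^\times$ directly from the definitions, and is surjective because the generators $\lra{a} - 1$ of $\ul{I}$ are hit by $\lra{a} \in F_1 \ul{GW}^\times$. The identifications $\{\pm 1\} \iso \ul{k}_0^M$ and $\ul{I}/\ul{I}^2 \iso \ul{k}_1^M \iso \Gm/2$ are the unramified version of Pfister's classical results, which may be extracted from \cite[Chapter 3]{A1-alg-top}.

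For $n \ge 2$, the key tool is Lemma \ref{lemm:Fn-tors}, which rewrites both $F_n \ul{GW}^\times = 1 + \ul{I}^n_{tor}$ and $F_{n+1} \ul{GW}^\times = 1 + \ul{I}^{n+1}_{tor}$. I will show that $\alpha_n$ factors through the refined morphism $\tilde{\alpha}_n : F_n \ul{GW}^\times \to \ul{I}^n_{tor}/\ul{I}^{n+1}_{tor}$ given by $1 + x \mapsto [x]$, and prove that $\tilde{\alpha}_n$ is a surjection with kernel $F_{n+1} \ul{GW}^\times$. Surjectivity is immediate, and the kernel computation reduces to the tautology $\ul{I}^n_{tor} \cap \ul{I}^{n+1} = \ul{I}^{n+1}_{tor}$ followed by a second application of Lemma \ref{lemm:Fn-tors}. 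Postcomposing with the resulting inclusion $\ul{I}^n_{tor}/\ul{I}^{n+1}_{tor} \hookrightarrow \ul{I}^n/\ul{I}^{n+1}$ gives the stated presentation. The main subtlety I anticipate is keeping straight the distinction between the three candidate targets $\ul{I}^{2n}, \ul{I}^{n+1}, \ul{I}^{n+1}_{tor}$ appearing in or around the formula for $\alpha_n$; the refinement above isolates the correct one. Finally, strict homotopy invariance is automatic: $\ZZ/2$ and $\Gm/2$ are strictly homotopy invariant as quotients of $\ZZ$ and $\Gm$ in the abelian category of strictly homotopy invariant sheaves, and $\ul{I}^n_{tor}/\ul{I}^{n+1}_{tor}$ is built from $\ul{GW}$ by kernels, torsion subsheaves, and quotients, all of which preserve strict homotopy invariance as recalled in the remarks preceding the statement.
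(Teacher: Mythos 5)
Your proposal is correct and follows essentially the same route as the paper: use $\dim$ for the degree-$0$ piece, $\alpha_1$ with the splitting $a\mapsto\lra{a}$ for the degree-$1$ piece, Lemma~\ref{lemm:Fn-tors} to identify $F_n\ul{GW}^\times=1+\ul{I}^n_{tor}$ and read off the image of $\alpha_n$ for $n\ge 2$, and closure of strictly homotopy invariant sheaves under kernels, torsion, and quotients for the last claim. The only minor stylistic difference is that you phrase the $n\ge2$ step via a ``refined'' target $\ul{I}^n_{tor}/\ul{I}^{n+1}_{tor}$ directly, while the paper first shows $\beta_n$ is injective into $\ul{I}^n/\ul{I}^{n+1}$ and then identifies its image; both rest on the same tautology $\ul{I}^n_{tor}\cap\ul{I}^{n+1}=\ul{I}^{n+1}_{tor}$.
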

\begin{proof}
For $n = 0$ the map $\ul{GW}^\times/F_1 \ul{GW}^\times \to
(\ul{GW}/\ul{I})^\times = \ZZ/2$ is an isomorphism: it is surjective since it
has a section and it is injective because $F_1 \ul{GW}^\times = (1 + \ul{I})
\cap \ul{GW}^\times$ by definition.

For $n \ge 1$ the map $\alpha_n$ satisfies
$\alpha_n^{-1}(\ul{I}^{n+1}/\ul{I}^{2n}) = F_{n+1} \ul{GW}^\times$ and hence
induces an injection $\beta_n: F_n \ul{GW}^\times/F_{n+1} \ul{GW}^\times \hookrightarrow
\ul{I}^n/\ul{I}^{n+1}$.

There is a homomorphism $\Gm/2 \to \ul{GW}^\times, a \mapsto \langle a \rangle$
splitting $\beta_1$, so $\beta_1$ is an isomorphism. For $n \ge 2$ by Lemma
\ref{lemm:Fn-tors} we have $F_n \ul{GW}^\times = 1 + \ul{I}^n_{tor}$ and so the
image of $\beta_n$ is $\ul{I}^n_{tor}/\ul{I}^{n+1}_{tor} \subset
\ul{I}^n/\ul{I}^{n+1}$, as claimed.

For the last claim, since each $\ul{I}^n$ is strictly homotopy invariant
\cite[Example 3.34]{A1-alg-top} so is $\ul{I}^n_{tor}$, and hence so is the
quotient $\ul{I}^n_{tor}/\ul{I}^{n+1}_{tor}$. Here we have used again that the
category of strictly homotopy invariant sheaves is abelian and closed under
filtered colimits.
\end{proof}

We will repeatedly use the following result, essentially due to Elman and Lum.

\begin{lemm}[Elman and Lum \cite{elman-lum-2cohom}]
   \label{lemm:Itors-vanishing}
Let $k$ be a field such that $char(k) \ne 2$
and $vcd_2(k) < n$.
Then $I^n_{tor}(k) = 0$ and in particular $2^n I^r_{tor}(k) = 0$ for all $r
> 0$ (and also $2^n W_{tor}(k) = 0$).
\end{lemm}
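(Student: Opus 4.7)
The proof plan is short, because the main vanishing statement is essentially the Elman--Lum theorem \cite{elman-lum-2cohom}. The strategy is to cite that theorem for the first clause and then deduce the ``in particular'' claims from it using only that $2 \in I(k)$.

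To deduce the ``in particular'' claims from $I^n_{tor}(k) = 0$, I first observe that $2 = \langle 1,1\rangle = \langle\langle -1\rangle\rangle \in I(k)$, so multiplication by $2^n$ carries $I^r(k)$ into $I^{n+r}(k)$ and $W(k)$ into $I^n(k)$. Since multiplication by $2$ manifestly preserves the torsion subgroup, for any $x \in I^r_{tor}(k)$ with $r > 0$ we have $2^n x \in I^{n+r}_{tor}(k) \subseteq I^n_{tor}(k) = 0$. Similarly, $2^n W_{tor}(k) \subseteq I^n_{tor}(k) = 0$. Both assertions follow.

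For the main statement $I^n_{tor}(k) = 0$, the plan is to cite Elman--Lum. For completeness I would also sketch the modern argument. By Pfister's local--global theorem, $W_{tor}(k)$ is $2$-primary and equals the intersection of kernels of all signatures. Passing to $K = k(\sqrt{-1})$, which has no orderings and satisfies $cd_2(K) = vcd_2(k) < n$, the Milnor conjecture (Voevodsky's theorem) identifies $I^m(K)/I^{m+1}(K) \cong H^m(K,\ZZ/2)$, which vanishes for $m \ge n$. Hence $I^n(K) = I^m(K)$ for all $m \ge n$, so $I^n(K) \subseteq \bigcap_m I^m(K) = 0$ by the Arason--Pfister Hauptsatz.

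The main obstacle is the descent from $K$ back to $k$, which must be argued on the \emph{torsion} subgroup only, since $I^n(k)$ itself is typically nonzero over formally real fields. I would use Arason's exact sequence relating $W(k)$ and $W(K)$ via restriction, transfer, and multiplication by $\langle\langle -1\rangle\rangle$, together with the fact that torsion elements are killed by a fixed power of $2$, to reduce the vanishing of $I^n_{tor}(k)$ to the already established vanishing of $I^n(K)$. This descent is the technical heart of Elman--Lum and is the reason the hypothesis involves $vcd_2$ rather than $cd_2$.
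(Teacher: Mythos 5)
Your proposal matches the paper's proof: the main vanishing statement is obtained by citing Elman--Lum, and the ``in particular'' clauses follow from $2 = \lra{1,1} \in I(k) \subset W(k)$, exactly as in the paper. The additional sketch via the Milnor conjecture and Arason's exact sequence is a correct (if heavier) alternative route to the cited theorem, but it is not needed.
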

\begin{proof}
Applying (vi) of the last theorem of \cite{elman-lum-2cohom} to $K = k(T), F =
k$ gives the first statement. The
remainder follows from $2 \in I(k) \subset W(k)$.
\end{proof}

\begin{thm} \label{thm:GW-times-strictly-htpy-inv}
Let $k$ be any field with $char(k) \ne 2$.
Then the sheaf $\ul{GW}^\times$ (on $Sm(k)$) is strictly homotopy
invariant. The same is true for $F_r \ul{GW}^\times$ for any $r$.
\end{thm}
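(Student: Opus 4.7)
The plan is to propagate strict homotopy invariance through the filtration $\ul{GW}^\times = F_0 \supset F_1 \supset F_2 \supset \cdots$ of Lemma \ref{lemm:GW-times-subquotients}, using the standard fact that in the abelian category of Nisnevich sheaves of abelian groups on $Sm(k)$, strict homotopy invariance is preserved by extensions: given a short exact sequence $0 \to A \to B \to C \to 0$, if two of the three are strictly homotopy invariant, then so is the third, by the five-lemma applied to the long exact sequence of Nisnevich cohomology. Hence, given strict homotopy invariance of the three types of subquotients listed in Lemma \ref{lemm:GW-times-subquotients}, I can propagate strict homotopy invariance from any $F_N$ down to $F_0 = \ul{GW}^\times$; the real content is establishing a base case for some $F_N$.

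For the base case, I would consider the tower of truncations $\{F_r / F_{r+N}\}_N$. Each $F_r/F_{r+N}$ is strictly homotopy invariant by iterated extension. Lemma \ref{lemm:Fn-tors} identifies $F_n \ul{GW}^\times$ with $1 + \ul{I}^n_{tor}$ for $n \ge 2$, and Arason--Pfister ($\bigcap_n I^n(L) = 0$ for any field $L$, cited already in the proof of Lemma \ref{lemm:odd-degree-base-change}) combined with the unramifiedness of $\ul{GW}$ shows that $\bigcap_n F_n \ul{GW}^\times$ is trivial at every Nisnevich stalk. Together with surjectivity of the transition maps, this identifies $F_r \ul{GW}^\times$ with the inverse limit $\varprojlim_N F_r/F_{r+N}$ as a Nisnevich sheaf of abelian groups.

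The main obstacle is that strict homotopy invariance is not preserved by arbitrary inverse limits of sheaves, due to potential $R^1\varprojlim$ obstructions on higher cohomology. To circumvent this, I would bring in Lemma \ref{lemm:Itors-vanishing}: for each field extension $L/k$, $\ul{I}^n_{tor}(L)$ vanishes once $n$ exceeds $vcd_2(L)$, so the filtration $F_\bullet \ul{GW}^\times(L)$ is trivial for all sufficiently large $n$ depending on $L$. Hence, over any given field, the tower of truncations is eventually constant, so passage to the inverse limit is effectively finite on sections over fields. Combined with Morel's principle that strictly homotopy invariant sheaves on $Sm(k)$ are determined by their values on finitely generated field extensions, this eventual stalkwise triviality lets me conclude strict homotopy invariance of the inverse limit, and then descending the filtration using the extension principle yields strict homotopy invariance for every $F_r \ul{GW}^\times$ and for $\ul{GW}^\times$ itself.
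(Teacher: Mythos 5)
Your core idea for the case of a perfect base field of finite virtual $2$-cohomological dimension is essentially the paper's: the subquotients of the filtration are strictly homotopy invariant by Lemma \ref{lemm:GW-times-subquotients}, Lemma \ref{lemm:Itors-vanishing} truncates the filtration, and the two-out-of-three property for extensions does the rest. However, you can avoid the inverse-limit machinery (and the $R^1\varprojlim$ worry you raise) entirely: strict homotopy invariance is a condition checked one $X \in Sm(k)$ at a time, and for $X$ of dimension $\le m$ every field occurring in $X_{Nis}$ has transcendence degree $\le m$ over $k$, hence $vcd_2 < n+m$ when $vcd_2(k) < n$. So $F_{n+m}\ul{GW}^\times|_{X_{Nis}} = 1$ by Lemma \ref{lemm:Fn-tors}, unramifiedness and Lemma \ref{lemm:Itors-vanishing}, and on $X$ (and on $X \times \Aone$) the sheaf is a \emph{finite} iterated extension of strictly homotopy invariant sheaves. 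The uniformity of the truncation level over all of $X_{Nis}$ is the point; mere ``eventual stalkwise triviality'' with no uniform bound would not let you commute the inverse limit past cohomology.

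The genuine gap is that the theorem is asserted for \emph{any} field of characteristic $\ne 2$, and your argument collapses when $vcd_2(k) = \infty$ (e.g.\ $k = \QQ(x_1, x_2, \dots)$): in that case there is no $N$ with $F_N\ul{GW}^\times(k) = 1$, the filtration is not eventually trivial over $k$ itself, and Lemma \ref{lemm:Itors-vanishing} gives you nothing. You also silently use the machinery of Section \ref{sec:sheaf-GW-times}, which is developed under the blanket assumption that $k$ is perfect. The paper closes both gaps with a continuity argument: writing $p: Spec(k) \to Spec(k_0)$ for the (essentially smooth) morphism to the prime field, one has $F_r\ul{GW}^\times|_{Sm(k)} = p^*(F_r\ul{GW}^\times|_{Sm(k_0)})$ by Corollary \ref{corr:app-cont}, $k_0$ is perfect with $vcd_2(k_0) < \infty$, and $p^*$ preserves strictly homotopy invariant sheaves. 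Without some such reduction your proof only establishes the theorem for perfect fields of finite virtual $2$-cohomological dimension.
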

\begin{proof}
Suppose first that $vcd_2(k) < n$ and $k$ is perfect.
Then for any field $L/k$ of
transcendence degree at most $m$ over $k$
we have $vcd_2(k) < n+m$ \cite[Theorem 28 of Chapter 4]{shatz1972profinite}
and so $I^{n+m}_{tor}(L) = 0$ by Lemma \ref{lemm:Itors-vanishing}.

Let $X \in Sm(k)$ be of dimension at most $m$. It follows from the first
paragraph and unramifiedness
that $F_{n+m} \ul{GW}^\times |_{X_{Nis}} = 1$. Hence on $X$ (and on $X \times
\Aone$) the sheaf $\ul{GW}^\times$ is a \emph{finite} extension of strictly
homotopy invariant sheaves, by Lemma \ref{lemm:GW-times-subquotients}, and
consequently is strictly homotopy invariant.

The same argument works for $F_r \ul{GW}^\times$ for $r \ne 0$.

For the general case
in which $vcd_2(k)$ might be infinite and $k$ need not be perfect we use a continuity
argument. Let $k_0 \subset k$ be the prime subfield and write $p: Spec(k) \to
Spec(k_0)$ for the canonical morphism. Then $k_0$ is perfect and $vcd_2(k) <
\infty$. The morphism $p$ is essentially smooth by \cite[Lemma
A.2]{hoyois-algebraic-cobordism}. Hence by Lemma \ref{corr:app-cont} we find
that $\ul{GW}|_{Sm(k)} = p^* (\ul{GW}|_{Sm(k_0)})$ and thus also $F_r
\ul{GW}^\times |_{Sm(k)} = p^* (F_r \ul{GW}^\times |_{Sm(k_0)})$. Since $p^*$
preserves strictly homotopy invariant sheaves \cite[Lemma
A.4]{hoyois-algebraic-cobordism}, this concludes the proof.
\end{proof}



\begin{prop} \label{prop:GW-mod-str-existence-extension}
  There exists a unique structure of a
  $\ul{GW}$-module on $\ul{GW}^\times$
  such that for a field $L/k$ of finite transcendence degree, the induced
  $GW(L)$-module structure on $GW^\times(L)$ is the one from Section
  \ref{sec:GW-module-structure}.
\end{prop}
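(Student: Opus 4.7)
The plan is to exploit the unramifiedness of $\ul{GW}^\times$ (established in Theorem~\ref{thm:GW-times-strictly-htpy-inv}) to obtain uniqueness essentially for free, and to reduce existence to an integrality statement over smooth local rings of dimension one.

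For uniqueness, since $\ul{GW}^\times$ and $\ul{GW}$ are strictly homotopy invariant, hence unramified, for any connected $X \in Sm(k)$ the restriction maps $\ul{GW}^\times(X) \hookrightarrow GW^\times(k(X))$ and $\ul{GW}(X) \hookrightarrow GW(k(X))$ are injective. Any two $\ul{GW}$-module structures on $\ul{GW}^\times$ whose induced pairings on function fields agree with the Section~\ref{sec:GW-module-structure} one are therefore forced to agree on $\ul{GW}^\times(X)$.

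For existence, I would define the pairing $\ul{GW}(X) \times \ul{GW}^\times(X) \to GW^\times(k(X))$ by applying the Section~\ref{sec:GW-module-structure} module structure to $k(X)$, and then prove that its image actually lies in $\ul{GW}^\times(X)$. By unramifiedness, $\ul{GW}^\times(X) = \bigcap_{x \in X^{(1)}} \ul{GW}^\times(\mathcal{O}_{X,x})$ inside $GW^\times(k(X))$, so the problem reduces to the following integrality statement: for $R = \mathcal{O}_{X,x}$ with $x \in X^{(1)}$, any $a \in GW(R)$, and any $b \in GW^\times(R)$, the element $b^a \in GW^\times(k(X))$ lies in $GW^\times(R)$. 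To prove this, I would use that bilinear forms over $R$ diagonalize (since $2 \in R^\times$), so $GW(R)$ is generated as an abelian group by $1$ together with the classes $\lra{u}$ for $u \in R^\times$. For each $u \in R^\times$ the algebra $R[\sqrt{u}]$ is finite étale over $R$ with trace form $\lra{2} + \lra{2u}$; comparing with the case $u = 1/2$ exhibits every $\lra{w} - 1$ as a difference of two traces $tr(A) - tr(B)$ of equal-degree finite étale extensions. Hence every element of $I(R)$ can be written as a $\ZZ$-linear combination $\sum_i(tr(A_i) - tr(B_i))$, and the corresponding expression $b^{a'} = \prod_i N_{A_i/R}(b|_{A_i})/N_{B_i/R}(b|_{B_i})$ for $a' \in I(R)$ (together with the trivial $b^{\dim(a)}$ part) lies in $GW^\times(R)$ because Rost's norm preserves units (Corollary~\ref{corr:GW-tambara}); independence from the chosen decomposition is automatic since the result agrees with the well-defined $b^a \in GW^\times(k(X))$ and $GW^\times(R) \hookrightarrow GW^\times(k(X))$ is injective.

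The hard part is this integrality step, most specifically the identification of $I(R)$ with the subgroup generated by differences of traces over a smooth local $R$. Once that is in hand, the module axioms at the sheaf level transfer for free via the injections $\ul{GW}^\times(X) \hookrightarrow GW^\times(k(X))$ from Proposition~\ref{prop:GW-module-existence-basics}, and naturality in $X$ follows from the base-change compatibility of the additive and multiplicative transfers encoded in the Tambara structure of Corollary~\ref{corr:GW-tambara}.
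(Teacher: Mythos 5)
Your proposal is correct and follows essentially the same route as the paper: uniqueness from unramifiedness, existence by reducing via $\ul{GW}^\times(X) = \bigcap_{x \in X^{(1)}} \ul{GW}^\times(X_x)$ to the case of a local ring, where $GW(R)$ is generated by diagonal forms $\lra{u}$ (Milnor, Corollary I.3.4) and hence by traces of finite étale $R$-algebras, so that $b^a$ is a quotient of Rost norms, which preserve units. The paper's proof is exactly this, including the identification of $\lra{u}$-classes with differences of traces of quadratic étale extensions carried over from the discussion preceding Proposition \ref{prop:GW-module-existence-basics}.
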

\begin{proof}

Uniqueness follows from unramifiedness of $\ul{GW}$.
For existence, let $x \in \ul{GW}(X)$ and $y
\in \ul{GW}^\times(X)$. Write $a: X^{(0)} \to X$ for the inclusion of the
generic points. We need to show that $(a^*y)^{a^*x} \in \ul{GW}^\times(X)
\subset \ul{GW}^\times(X^{(0)})$ is an unramified element. Indeed, recall
(possibly from Appendix \ref{sec:app-htpy-mod}) that if $F$ is an unramified
sheaf and $X$ is connected (hence irreducible), we have
\[ F(X) = \bigcap_{x \in X^{(1)}} F(X_x) \subset F(k(X)). \]
In other words we need to prove that $(a^*y)^{a^*x} \in \ul{GW}^\times(X)$
whenever $X$ is the spectrum of a dvr (or more generally local ring).
In this case $\ul{GW}(X) = GW(X)$ is generated by
the one-dimensional diagonal forms $\langle a \rangle$ with $a \in
\mathcal{O}^\times(X)$ \cite[Corollary I.3.4]{milnor1973symmetric} and
consequently the traces of étale $X$-schemes generate $GW(X)$, by the same
argument as before Proposition \ref{prop:GW-module-existence-basics}. Let $x = tr(Y_1) -
tr(Y_2)$. Then $(a^*y)^{a^*x} = a^*(N_{Y_1/X}(y|_{Y_1})/N_{Y_2/X}(y|_{Y_2}))$. Since
$N_{Y_1/X}(y|_{Y_1}), N_{Y_2/X}(y|_{Y_2}) \in GW(X)^\times$, this concludes the proof.
\end{proof}

By unramifiedness, the results from Section \ref{sec:GW-module-structure} over
fields immediately extend to all of $\ul{GW}$:

\begin{corr} \label{corr:not-so-basics}
\begin{enumerate}[(i)]
\item Each of the subsheaves $F_n \ul{GW}^\times \subset \ul{GW}^\times$ is a
  sub-$\ul{GW}$-module.
\item For any $n, m \ge 0$ we have
   $(F_n \ul{GW}^\times)^{\ul{I}^m} \subset F_{n+m} \ul{GW}^\times$.
\end{enumerate}
\end{corr}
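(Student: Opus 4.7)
The plan is to bootstrap both statements from their field-level analogues using unramifiedness, much in the spirit of the proof of Proposition \ref{prop:GW-mod-str-existence-extension}.

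First I would observe that all the sheaves in sight are unramified: $\ul{GW}$ is strictly homotopy invariant, $\ul{GW}^\times$ and each $F_n \ul{GW}^\times$ are strictly homotopy invariant by Theorem \ref{thm:GW-times-strictly-htpy-inv}, and hence unramified in the sense of \cite[Lemma 6.4.4]{morel2005stable}. Consequently, for any connected $X \in Sm(k)$ with generic point $\eta$, the inclusion $F_n \ul{GW}^\times \hookrightarrow \ul{GW}^\times$ yields
\[ F_n \ul{GW}^\times(X) = \ul{GW}^\times(X) \cap F_n \ul{GW}^\times(k(\eta)), \]
the intersection being taken inside $\ul{GW}^\times(k(\eta))$. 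Thus, to check that a given section of $\ul{GW}^\times$ on $X$ lies in $F_n \ul{GW}^\times$, it suffices to check this after restriction to each generic point. (Working one connected component at a time, there is no loss of generality in assuming $X$ connected.)

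Second, the module structure furnished by Proposition \ref{prop:GW-mod-str-existence-extension} is characterized by compatibility with restriction to fields, so in particular $(y^x)|_\eta = (y|_\eta)^{x|_\eta}$ for $x \in \ul{GW}(X)$, $y \in \ul{GW}^\times(X)$.

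Combining these two observations, for (i) I take $y \in F_n \ul{GW}^\times(X)$ and $x \in \ul{GW}(X)$; by Proposition \ref{prop:GW-module-existence-basics}(iii), we have $(y|_\eta)^{x|_\eta} \in F_n GW^\times(k(\eta))$, and together with $y^x \in \ul{GW}^\times(X)$ we conclude $y^x \in F_n \ul{GW}^\times(X)$. For (ii), the identical argument applies with input Proposition \ref{prop:norm-not-so-basics}(iv) in place of (iii), yielding $(y|_\eta)^{x|_\eta} \in F_{n+m} GW^\times(k(\eta))$ when $x \in \ul{I}^m(X)$ and $y \in F_n \ul{GW}^\times(X)$, and hence $y^x \in F_{n+m} \ul{GW}^\times(X)$.

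There is no real obstacle here: the entire content is the formal unramifiedness reduction, which the machinery of Section \ref{sec:sheaf-GW-times} has already made available. The only point that requires a moment's thought is the compatibility of the exponentiation with the generic-point restriction, but this is precisely how the module structure was produced in Proposition \ref{prop:GW-mod-str-existence-extension}.
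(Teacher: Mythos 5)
Your proposal is essentially the paper's proof: reduce membership in $F_n$ to the generic point via unramifiedness, note that the exponentiation of Proposition \ref{prop:GW-mod-str-existence-extension} commutes with restriction to $k(X)$, and then quote Proposition \ref{prop:GW-module-existence-basics}(iii) for (i) and Proposition \ref{prop:norm-not-so-basics}(iv) for (ii). The one point you pass over too quickly is the identity $F_n \ul{GW}^\times(X) = \ul{GW}^\times(X) \cap F_n GW^\times(k(X))$: this is not a purely formal consequence of both sheaves being unramified, since for an inclusion of unramified sheaves $F \subset G$ one still has to check $F(\mathcal{O}) = G(\mathcal{O}) \cap F(K)$ over each discrete valuation ring $\mathcal{O} \subset K = k(X)$ appearing in the intersection formula. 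The paper obtains this by applying Lemma \ref{lemm:app-submodule} (whose proof uses the boundary maps $\partial^\pi$ of homotopy modules) to the inclusion $\ul{I}^n \subset \ul{GW}$, giving $\ul{I}^n(X) = \ul{I}^n(X^{(0)}) \cap \ul{GW}(X)$, and then intersecting with $\ul{GW}^\times$; your argument should cite that lemma (or reproduce its dvr-level content) rather than attributing the step to unramifiedness alone. With that repair the proof is complete and coincides with the paper's.
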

\begin{proof}
For (i), if $x \in \ul{GW}(X)$ and $y \in F_n\ul{GW}^\times(X)$ then we
wish to show that $y^x \in F_n \ul{GW}^\times(X)$, where $y^x \in GW^\times(X)$ is defined using
the module structure established in Proposition
\ref{prop:GW-mod-str-existence-extension}. But by definition
$F_n \ul{GW}^\times(X) = 1 + \ul{I}^n(X) \cap \ul{GW}^\times(X)$,
and $\ul{I}^n(X) = \ul{I}^n(X^{(0)}) \cap \ul{GW}(X)$ as a consequence of Lemma
\ref{lemm:app-submodule}.
It follows that $F_n \ul{GW}^\times(X) =
\ul{GW}^\times(X) \cap F_n GW^\times(X^{(0)})$. We are thus
reduced to showing that $y^x \in F_n GW^\times(X^{(0)})$. This is Proposition
\ref{prop:GW-module-existence-basics} part (iii).

The argument for (ii) is the same, using Proposition
\ref{prop:norm-not-so-basics} part (iv).
\end{proof}

We can use the $\ul{GW}$-module structure to define a pairing
\[ \beta: \ZZ[\Gm] \otimes \ul{GW}^\times \to \ul{GW}^\times,
    (u, x) \mapsto x^{\lra{u} - 1}. \]
Since $\lra{u} - 1 \in \ul{I}$, by Corollary \ref{corr:not-so-basics}
we know that $\beta(\ZZ[\Gm] \otimes F_n \ul{GW}^\times) \subset F_{n+1}
\ul{GW}^\times$. We write $\beta_n: \ZZ[\Gm] \otimes F_n \ul{GW}^\times \to F_{n+1}
\ul{GW}^\times$ for this restricted pairing.
By adjunction, $\beta_n$ induces a homomorphism $\beta_n^\dagger: F_n
\ul{GW}^\times \to (F_{n+1} \ul{GW}^\times)_{-1}$. Here for a presheaf $F$ we
denote by $F_{-1} = \iHom(\ZZ[\Gm], F)$ its \emph{contraction}; see Appendix
\ref{sec:app-htpy-mod} for more on this construction.

\begin{prop} \label{prop:betan-iso}
Let $k$ be a field of characteristic different from $2$.
For $n \ge 2$ the homomorphism $\beta_n^\dagger: F_n
\ul{GW}^\times \to (F_{n+1} \ul{GW}^\times)_{-1}$ is an isomorphism.
\end{prop}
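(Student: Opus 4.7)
My plan is to prove $\beta_n^\dagger$ is an isomorphism by comparing associated gradeds for the filtration $F_\bullet \ul{GW}^\times$. By Corollary~\ref{corr:not-so-basics}(ii), the pairing $\beta$ restricts to $\ZZ[\Gm] \otimes F_m \ul{GW}^\times \to F_{m+1} \ul{GW}^\times$ for every $m \ge 2$, so $\beta_n^\dagger$ carries $F_m \ul{GW}^\times$ into $(F_{m+1} \ul{GW}^\times)_{-1}$ for every $m \ge n$. Since contraction is exact on strictly $\Aone$-invariant sheaves, the filtration $\{(F_m\ul{GW}^\times)_{-1}\}_{m \ge n+1}$ on the target has graded pieces $(F_m/F_{m+1})_{-1}$.

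To identify the induced maps on graded pieces I apply Proposition~\ref{prop:norm-not-so-basics}(i): for $\omega \in \ul{I}^m_{tor}$ with $m \ge 2$,
\[ (1+\omega)^{\lra{u}-1} \equiv 1 + \omega(\lra{u}-1) \pmod{\ul{I}^{2m}}, \]
and since $2m \ge m+2$ this is a congruence modulo $F_{m+2}\ul{GW}^\times$. Hence, using the identification $F_m/F_{m+1} \iso \ul{I}^m_{tor}/\ul{I}^{m+1}_{tor}$ from Lemma~\ref{lemm:GW-times-subquotients}, the induced map on the $m$-th graded piece becomes
\[ \bar\beta_m \colon \ul{I}^m_{tor}/\ul{I}^{m+1}_{tor} \to \bigl(\ul{I}^{m+1}_{tor}/\ul{I}^{m+2}_{tor}\bigr)_{-1}, \quad \omega \mapsto \bigl(u \mapsto \omega(\lra{u}-1)\bigr). \]
This is the map induced by Morel's classical contraction isomorphism $\ul{I}^m \iso (\ul{I}^{m+1})_{-1}$, which is given by multiplication by $\lra{u}-1$ and comes from the homotopy module structure on Milnor--Witt $K$-theory. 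This iso restricts to the torsion subsheaves (since $\omega \cdot (\lra{u}-1)$ is torsion iff $\omega$ is, as a consequence of the iso itself) and descends to the graded quotients by exactness of contraction, yielding that $\bar\beta_m$ is an isomorphism.

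Finally I assemble via a filtered five-lemma. Stalkwise on a field $L$ with $vcd_2(L) < \infty$, Lemma~\ref{lemm:Itors-vanishing} ensures that $F_m \ul{GW}^\times(L') = 1$ for every finitely generated $L'/L$ once $m$ is large, so the filtration terminates and iso on graded pieces implies iso overall. For general $k$ with $vcd_2(k)$ possibly infinite, I reduce to the prime subfield $k_0$ via the continuity argument of Lemma~\ref{corr:app-cont}, exactly as in the proof of Theorem~\ref{thm:GW-times-strictly-htpy-inv}; since the formation of $\beta_n^\dagger$ is compatible with essentially smooth pullback, this concludes the proof. The main obstacle is the middle step, namely making sure that Morel's contraction isomorphism for $\ul{I}^\bullet$ restricts cleanly to the torsion subsheaves and descends to the quotients $\ul{I}^m_{tor}/\ul{I}^{m+1}_{tor}$; this is routine but needs careful bookkeeping in Morel's framework.
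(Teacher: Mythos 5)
Your proposal follows essentially the same route as the paper: filter by $F_\bullet$, use exactness of contraction and Proposition~\ref{prop:norm-not-so-basics}(i) to identify the graded maps with Morel's contraction isomorphism for $\ul{I}^*_{tor}$, conclude by descending induction for $vcd_2(k)<\infty$, and then remove that hypothesis by continuity. The ``careful bookkeeping'' you flag at the end is exactly what the paper's proof spells out, namely the explicit comparison of $\gamma_n^\dagger$ and $\epsilon_n^\dagger$ on sections over fields together with the observation $(F_{tor})_{-1}=(F_{-1})_{tor}$.
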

\begin{proof}
Throughout we will assume $n \ge 2$. We first assume that $k$ is perfect and
$vcd_2(k) < \infty$; these assumptions are removed at the end by a continuity
argument.
The commutative square
\begin{equation*}
\begin{CD}
\ZZ[\Gm] \otimes F_{n+1} \ul{GW}^\times  @>\beta_{n+1}>> F_{n+2} \ul{GW}^\times \\
@VVV                                  @VVV                 \\
\ZZ[\Gm] \otimes F_{n} \ul{GW}^\times  @>\beta_n>> F_{n+1} \ul{GW}^\times \\
\end{CD}
\end{equation*}
in which the vertical maps are the canonical inclusions,
induces by adjunction a commutative square
\begin{equation*}
\begin{CD}
F_{n+1} \ul{GW}^\times  @>\beta_{n+1}^\dagger>> (F_{n+2} \ul{GW}^\times)_{-1} \\
@VVV                                  @VVV                 \\
F_{n} \ul{GW}^\times  @>\beta_n^\dagger>> (F_{n+1} \ul{GW}^\times)_{-1} \\
\end{CD}
\end{equation*}
Since contraction is an exact operation \cite[Lemma 7.33]{A1-alg-top}, by Lemma
\ref{lemm:GW-times-subquotients} we get a diagram of short exact sequences
\begin{equation}
\label{eq:first-exact-sequences}
\begin{CD}
1 @>>> F_{n+1} \ul{GW}^\times @>>> F_{n} \ul{GW}^\times @>>> \ul{I}^n_{tor}/\ul{I}^{n+1}_{tor} @>>> 0 \\
@.       @V{\beta_{n+1}^\dagger}VV      @V{\beta_n^\dagger}VV @V{\gamma_n^\dagger}VV \\
1 @>>> (F_{n+2} \ul{GW}^\times)_{-1} @>>> (F_{n+1} \ul{GW}^\times)_{-1} @>>> (\ul{I}^{n+2}_{tor}/\ul{I}^{n+1}_{tor})_{-1} @>>> 0. \\
\end{CD}
\end{equation}
Here $\gamma_n^\dagger$ is defined so as to make the diagram commute.

I claim that $\gamma_n^\dagger$ is an isomorphism. To see this, let $\delta_n:
\ZZ[\Gm] \otimes \ul{I}^n \to \ul{I}^{n+1}$ be the homomorphism $(u, x) \mapsto
(\lra{u} - 1)x$. Then $\delta_n^\dagger: \ul{I}^n \to (\ul{I}^{n+1})_{-1}$ is an
isomorphism. This is just because $\ul{I}^*$ is a homotopy module where the
element $[u] \in \ul{K}_1^{MW}$ acts via $\lra{u} - 1 \in \ul{I}^1$.
It follows that the restriction $\delta_n^\dagger:
\ul{I}^n_{tor} \to (\ul{I}^{n+1}_{tor})_{-1}$ is also an isomorphism. Note
that $(F_{tor})_{-1} = (F_{-1})_{tor}$. Now
consider the commutative diagram of exact sequences
\begin{equation*}
\begin{CD}
1 @>>> \ul{I}_{tor}^{n+1} @>>> \ul{I}_{tor}^n @>>> \ul{I}^n_{tor}/\ul{I}^{n+1}_{tor} @>>> 0 \\
@.       @V{\delta_{n+1}^\dagger}VV      @V{\delta_n^\dagger}VV @V{\epsilon_n^\dagger}VV \\
1 @>>> (\ul{I}_{tor}^{n+2})_{-1} @>>> (\ul{I}_{tor}^{n+1})_{-1} @>>> (\ul{I}^{n+1}_{tor}/\ul{I}^{n+2}_{tor})_{-1} @>>> 0. \\
\end{CD}
\end{equation*}
We find that $\epsilon_n^\dagger$ is an isomorphism, by the 5-lemma. But also $\epsilon_n^\dagger =
\gamma_n^\dagger$. For this it suffices to show that $\epsilon_n = \gamma_n$.
Since the target is strictly homotopy invariant, hence
unramified, it suffices to show that $\epsilon_n$ and $\gamma_n$ induce the same
map on sections over fields. Thus let $L$ be a field. By definition, the
following diagram commutes:
\begin{equation*}
\begin{CD}
\ZZ[L^\times] \otimes F_n GW^\times(L) @>>> \ZZ[L^\times] \otimes I^n_{tor}(L)/I^{n+1}_{tor}(L) \\
@V{\beta_n}VV                                @V{\gamma_n}VV \\
F_{n+1}GW^\times(L)                    @>>> I^{n+1}_{tor}(L)/I^{n+2}_{tor}(L).
\end{CD}
\end{equation*}
Here the horizontal maps are induced by $x \mapsto x-1$, and $\beta_n(a \otimes
x) = x^{\lra{a} - 1}$, for $a \in L^\times$ and $x \in F_n GW^\times(L)$. It
now follows from Proposition
\ref{prop:norm-not-so-basics} part (i) that for $\bar{x} \in
I^n_{tor}(L)/I^{n+1}_{tor}(L)$ we have $\gamma_n(a \otimes \bar{x}) = (\lra{a} -
1) x$. Here we use that $n \ge 2$. By definition, this is the same as
$\epsilon_n(a \otimes \bar{x})$.
Hence $\epsilon_n = \gamma_n$ and thus $\gamma_n^\dagger$ is an isomorphism as claimed.

Now in order to show that $\beta_n^\dagger$ is an isomorphism, it suffices to
show that for every field $L$ (of finite transcendence degree over $k$) the
section $\beta_n^\dagger(L)$ is an isomorphism (since the kernel and cokernel of
$\beta_n^\dagger$ are strictly homotopy invariant and hence unramified). Recall
that we assume for now that $vcd_2(k) < \infty$. Then also $vcd_2(L) < \infty$ and
for $n$ sufficiently large we have $F_n
GW^\times(L) = 1$, by the Lemmas \ref{lemm:Fn-tors} and
\ref{lemm:Itors-vanishing}.
In particular for $n$ sufficiently large
$\beta_n^\dagger(L)$ is an isomorphism. We may thus prove that
$\beta_n^\dagger(L)$ is an isomorphism for all $n \ge 2$ by descending induction
on $n$. The induction step follows by considering the diagram of exact sequences
\eqref{eq:first-exact-sequences} and using that $\gamma_n^\dagger$ is
an isomorphism, as we established above.

For the general case in which $vcd_2(k)$ may be infinite and $k$ may be
imperfect, let $p: Spec(k) \to
Spec(k_0)$ be an essentially smooth morphism to a perfect field with
$vcd_2(k_0) < \infty$ (e.g. $k_0$ the prime field). It follows from Lemma
\ref{lemm:Nis-cont} that $p^*$ commutes with contractions, and it follows from
Corollary \ref{corr:app-cont} that $p^* F_n \ul{GW}^\times = F_n
\ul{GW}^\times$. Then $\beta_n^\dagger = p^* \beta_n^\dagger$ is an isomorphism.
\end{proof}

We have thus managed to deloop the sheaves $F_n \ul{GW}^\times$ for $n \ge 2$.
Recall (possibly from Appendix \ref{sec:app-htpy-mod})
that a \emph{homotopy module} consists of a sequence of sheaves $F_n \in
Shv_{Nis}(Sm(k))$ together with isomorphisms $F_n \iso (F_{n+1})_{-1}$, such
that each $F_n$ is strictly homotopy invariant.

\begin{corr} \label{corr:F*-existence}
There is a homotopy module $F_*$, determined up to unique isomorphism of
homotopy modules,  such that for $n
\ge 2$ the following hold:
\begin{itemize}
\item $F_n \iso F_n \ul{GW}^\times$, and
\item the bonding map $F_n \xrightarrow{\iso} (F_{n+1})_{-1}$ equals $\beta_n^\dagger$.
\end{itemize}
\end{corr}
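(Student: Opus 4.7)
The plan is essentially a bookkeeping construction: the hard work has already been done in Theorem \ref{thm:GW-times-strictly-htpy-inv} and Proposition \ref{prop:betan-iso}, and it only remains to package the data into a homotopy module and handle the extension to indices $n < 2$.

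First I would set $F_n := F_n \ul{GW}^\times$ for all $n \ge 2$. By Theorem \ref{thm:GW-times-strictly-htpy-inv} each such $F_n$ is strictly homotopy invariant, and by Proposition \ref{prop:betan-iso} the maps $\beta_n^\dagger: F_n \to (F_{n+1})_{-1}$ are isomorphisms, so this already constitutes the ``upper half'' of a homotopy module. Next I would extend downward by descending induction: define $F_n := (F_{n+1})_{-1}$ for $n < 2$, with bonding map the identity of $(F_{n+1})_{-1}$. Because contraction preserves strict homotopy invariance (see \cite[Lemma 7.33]{A1-alg-top}, used similarly in the proof of Proposition \ref{prop:betan-iso}), each newly introduced $F_n$ is strictly homotopy invariant. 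This produces a homotopy module $F_*$ in the sense recalled in Appendix \ref{sec:app-htpy-mod} with the required identifications.

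For the uniqueness statement, suppose $F'_*$ is another homotopy module equipped with isomorphisms $\phi_n: F'_n \xrightarrow{\iso} F_n \ul{GW}^\times$ for $n \ge 2$ intertwining the bonding maps with $\beta_n^\dagger$. For $n \ge 2$ the $\phi_n$ are given; for $n < 2$, the composite
\[ F'_n \xrightarrow{\iso} (F'_{n+1})_{-1} \xrightarrow{(\phi_{n+1})_{-1}} (F_{n+1})_{-1} = F_n \]
defines $\phi_n$ by descending induction, and these are forced to be the unique isomorphisms commuting with all bonding maps. This yields the unique isomorphism of homotopy modules $F'_* \xrightarrow{\iso} F_*$.

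There is no real obstacle in this argument; the substantive inputs (strict homotopy invariance of $F_n \ul{GW}^\times$ and the fact that $\beta_n^\dagger$ is an isomorphism for $n \ge 2$) have already been established. The only subtlety worth verifying explicitly is that the ``identity'' bonding maps chosen below degree $2$ interact correctly with $\beta_2^\dagger$ on the seam, but this is immediate from the construction since we have used $\beta_2^\dagger$ itself as the bonding map $F_2 \to (F_3)_{-1}$ and defined $F_1 = (F_2)_{-1}$ tautologically.
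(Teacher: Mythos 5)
Your proposal is correct and matches the paper's (implicit) reasoning: the corollary is stated without proof as an immediate consequence of Theorem \ref{thm:GW-times-strictly-htpy-inv} and Proposition \ref{prop:betan-iso}, and your writeup just spells out the standard bookkeeping — extending downward by contractions and noting that the bonding isomorphisms force uniqueness of the comparison map in negative range. The only point worth keeping in mind is that the "unique isomorphism" must be understood as compatible with the fixed identifications $F_n \iso F_n\ul{GW}^\times$ for $n \ge 2$ (as you do), since e.g.\ inversion on $1+\ul{I}^n_{tor}$ gives a nontrivial automorphism of $F_*$ commuting with the $\beta_n^\dagger$.
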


\section{The logarithm isomorphism}
\label{sec:logarithm}
Throughout this section, the base field $k$ is assumed to be perfect. As always,
we assume that it is of characteristic different from 2.

In this section we shall study in more detail the homotopy module $F_*$. Recall
from Appendix \ref{sec:app-htpy-mod}
that if $G_*$ is any homotopy module, then each $G_n$ has the structure of a
$\ul{GW}$-module,
and also has transfers along finite étale
morphisms known as \emph{cohomological transfers}. Note that the definition of a
homotopy module $G_*$ only asks for isomorphisms $G_n \xrightarrow{\iso}
(G_{n+1})_{-1}$ and strict homotopy invariance of the $G_i$. The transfers and
$\ul{GW}$-module structure are implicit in this data.
In particular, each of the sheaves $F_n \ul{GW}^\times$ (for $n \ge 2$) acquires
an a priori \emph{new} $\ul{GW}$-module structure and \emph{new} transfers. In
this section, among other things, we shall show
that the ``new'' $\ul{GW}$-module structure on $F_n = F_n \ul{GW}^\times$ coincides with that
of Section \ref{sec:sheaf-GW-times}, and that the ``new'' cohomological transfers
coincide with Rost's multiplicative transfers. We begin by comparing the module
structures. For $X \in Sm(k), a \in \ul{GW}(X), x \in F_*(X)$ we denote by $ax$ the
action of $\ul{GW}(X)$ coming from the fact that $F_*$ is a homotopy module, and
we denote by $x^a$ the action coming from the module structure we constructed in
Section \ref{sec:sheaf-GW-times}. What we are trying to prove, then, is that
$ax = x^a$.

\begin{lemm}
For $n \ge 2$ the $\ul{GW}$-module structure on $F_n = F_n \ul{GW}^\times$
coincides with the module structure defined in Section \ref{sec:sheaf-GW-times}.
\end{lemm}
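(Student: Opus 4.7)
The plan is to verify agreement of the two $\ul{GW}$-module structures on $F_n$ by checking them on a convenient additive generating set of $GW$. Both actions are morphisms of sheaves $\ul{GW} \otimes F_n \to F_n$, and since all sheaves in sight are strictly homotopy invariant, hence unramified, it suffices to check agreement after evaluating on fields $L/k$ of finite transcendence degree.

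At the level of $GW(L)$, both structures are additive in the ring variable and both send $1$ to the identity (as they are unital modules). Now $GW(L)$ is additively generated by the one-dimensional forms $\lra{u}$, $u \in L^\times$, and hence by $1$ together with the elements $\lra{u} - 1 \in I(L)$. Thus the claim reduces to showing that the two actions of $\lra{u} - 1$ agree on $F_n(L)$.

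For the homotopy-module action, recall that on any homotopy module $F_*$ the $\ul{GW}$-action is recovered from the bonding data via the identity $\lra{u} - 1 = \eta [u] \in \ul{K}_0^{MW}$. Concretely, $[u]$ acts on $F_n$ by the map $\ZZ[\Gm] \otimes F_n \to F_{n+1}$ adjoint to the bonding, while $\eta$ acts as a canonical morphism $F_{n+1} \to F_n$. By the very construction of the bonding isomorphism $\beta_n^\dagger$ (Proposition \ref{prop:betan-iso}), the new action of $[u]$ on $x \in F_n$ equals $\beta_n(u, x) = x^{\lra{u} - 1}$, i.e.\ the \emph{old} action of $\lra{u} - 1$. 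Hence the new action of $\lra{u} - 1$ on $x$ is the image of $x^{\lra{u}-1} \in F_{n+1}$ under $\eta\colon F_{n+1} \to F_n$.

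The main obstacle is therefore to identify the $\eta$-action on $F_*$ with the natural inclusion $F_{n+1}\ul{GW}^\times \hookrightarrow F_n\ul{GW}^\times$; granted this, the new action of $\lra{u}-1$ on $x$ equals $x^{\lra{u}-1}$ as an element of $F_n\ul{GW}^\times$, matching the old action by definition. One way to verify this identification is by comparison with the homotopy module $\ul{I}^*_{tor}$, where $\eta$ is known to act as the canonical inclusion $\ul{I}^{n+1}_{tor} \hookrightarrow \ul{I}^n_{tor}$, combined with the compatibility of the subquotient isomorphisms $F_n/F_{n+1} \iso \ul{I}^n_{tor}/\ul{I}^{n+1}_{tor}$ from Lemma \ref{lemm:GW-times-subquotients} with the bonding maps of Proposition \ref{prop:betan-iso} (this compatibility was in essence already verified during the proof that $\gamma_n^\dagger = \epsilon_n^\dagger$).
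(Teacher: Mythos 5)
Your reduction is sound up to the point you yourself flag as ``the main obstacle'': the first three paragraphs correctly reduce the statement to identifying the $\eta$-action $\eta\colon F_{n+1}\to F_n$ of the homotopy module $F_*$ with the natural inclusion $F_{n+1}\ul{GW}^\times\hookrightarrow F_n\ul{GW}^\times$. But the justification you offer for that identification does not work, and it is precisely where the content of the lemma lies. The compatibility $\gamma_n^\dagger=\epsilon_n^\dagger$ from the proof of Proposition \ref{prop:betan-iso} compares only the \emph{subquotients} $F_n/F_{n+1}\iso \ul{I}^n_{tor}/\ul{I}^{n+1}_{tor}$, and only with respect to the degree-raising maps $[u]$; at this stage of the paper there is no map of homotopy modules $F_*\to\ul{I}^*_{tor}$ to compare along (constructing one is the content of Section \ref{sec:logarithm}, whose proofs rely on the present lemma). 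Moreover $\eta$ is invisible on the associated graded: the map $\ul{I}^{n+1}_{tor}/\ul{I}^{n+2}_{tor}\to\ul{I}^{n}_{tor}/\ul{I}^{n+1}_{tor}$ induced by the inclusion is zero, so no amount of subquotient compatibility can pin down $\eta$ on $F_*$. The $\eta$-action is extracted from the bonding isomorphisms by a double contraction and is simply not determined by the associated graded data; your step identifying it with the inclusion is an unproved claim essentially equivalent to the lemma itself.

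For contrast, the paper never computes $\eta$ directly. It characterizes the $\ul{GW}$-action on any homotopy module by the boundary-map formula $\partial^\pi([u\pi]x)=\lra{s^*(u)}s^*(x)$ of Lemma \ref{lemm:app-bdry-formula}, applied over a Henselian dvr $\mathcal{O}$ with residue field $\kappa$, where surjectivity of $s^*$ makes this formula determine the action of every unit $\lra{\bar u}$ and hence of all of $GW(\kappa)$. One then computes, using only the known $[v]$-action $[v]y=y^{\lra{v}-1}$, that $[u\pi]x=([\pi]x^{\lra{u}})([u]x)$ and applies $\partial^\pi$ to conclude. If you wish to keep the decomposition $\lra{u}-1=\eta[u]$, you would still need a contraction or residue computation of this kind to evaluate $\eta$.
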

\begin{proof}
We first describe the $\ul{GW}$-module structure on a homotopy module $G_*$.
Let $L$ be a field of finite transcendence degree over $k$.
Let $\mathcal{O}
\subset L$ be a geometric dvr with uniformiser $\pi$ and residue field $\kappa$.
As explained in Appendix \ref{sec:app-htpy-mod},
there is the boundary map $\partial^\pi: G_{n+1}(L) \to G_n(\kappa)$, with
kernel $G_{n+1}(\mathcal{O})$.
The isomorphism $G_n \to
(G_{n+1})_{-1}$ induces a map $\ZZ[\Gm] \otimes G_n \to G_{n+1}$ which we denote
$(u, x) \mapsto [u]x$. Then by Lemma \ref{lemm:app-bdry-formula} in the appendix one has
\begin{equation} \label{eq:htpy-module-module-str-partial}
  \partial^\pi([u\pi]x) = \lra{s^*(u)}s^*(x),
\end{equation}
where $s: Spec(\kappa) \to Spec(\mathcal{O})$ is the inclusion of the closed
point, and where on the right hand side multiplication by an element of
$GW(\kappa)$ refers to the $GW$-module structure we are describing.
If in addition $\mathcal{O}$ is Henselian then $s$ has a section and so
$s^*$ is surjective, so in this case equation
\eqref{eq:htpy-module-module-str-partial} determines the $GW$-module structure
on $G_*(\kappa)$ uniquely.

Now let $\kappa$ be a field of finite transcendence degree over $k$. It
suffices to show that the $GW(\kappa)$-module structure on $F_n$ is the
one from Section \ref{sec:sheaf-GW-times}. Choose an essentially smooth local curve over $k$ with residue
field $\kappa$ (e.g. the localisation of $\mathbb{A}^1_\kappa$ in the origin). Passing
to the Henselization, we obtain a Henselian dvr $\mathcal{O}$ with residue field
$\kappa$ and some fraction field $L$, also of finite transcendence degree. Pick a uniformiser $\pi$.
We wish to show that for all $\bar{u} \in
\kappa^\times$ and all $\bar{x} \in F_n(\kappa)$ we have $\lra{\bar{u}}\bar{x} =
\bar{x}^{\lra{\bar{u}}},$
where on the left hand side we mean the module structure coming from $F_*$
being a homotopy module and on the right hand side we mean the module
structure constructed in Section \ref{sec:sheaf-GW-times}.
By the first paragraph, for this it suffices to show that
$\partial^\pi([u\pi]x) =
s^*(x)^{\lra{\bar{u}}}$ for all $x \in F_n(\mathcal{O})$ and all $u \in
\mathcal{O}^\times$. We compute
\[ [u\pi]x = x^{\lra{u\pi}-1} = x^{\lra{u} \lra{\pi} - \lra{u} + \lra{u} - 1} =
   \left(x^{\lra{u}}\right)^{\lra{\pi} - 1} x^{\lra{u} - 1} = ([\pi] x^{\lra{u}})([u]x). \]
Here the first and last equalities are by definition of the homotopy module
structure on $F_*$.
Note that $\partial^\pi([\pi]z) = s^*(z)$ by equation
\eqref{eq:htpy-module-module-str-partial}, and $\partial^\pi$ is a homomorphism
with kernel $F_{n+1}(\mathcal{O})$. Since $[u]x \in F_{n+1}(\mathcal{O})$ we thus obtain
\[ \partial^\pi([u\pi]x) = \partial^\pi([\pi] x^{\lra{u}}) = s^*(x^{\lra{u}}) =
s^*(x)^{\lra{\bar{u}}}. \]
This concludes the proof.
\end{proof}

What we have done so far has the following interesting consequence.

\begin{lemm} \label{lemm:defn-log-approx}
Let $n \ge 2, m \ge 0$, $X \in Sm(k)$, and $x \in F_n \ul{GW}^\times(X)$. Then
the element
\[ x^{(\lra{t_1} - 1)(\lra{t_2} - 1) \dots (\lra{t_m} - 1)} \in
         F_{n+m}\ul{GW}^\times(X \times (\mathbb{A}^1 \setminus 0)^m)
         \subset \ul{GW}(X \times (\mathbb{A}^1 \setminus 0)^m) \]
may be written as
\[ 1 + (\lra{t_1} - 1)(\lra{t_2} - 1) \dots (\lra{t_m} - 1) y \]
for a unique $y \in \ul{I}^n(X)$.

This induces a bijection (of sets!) $\log_{(m)}: F_n \ul{GW}^\times(X) \to
  \ul{I}^n_{tor}(X)$.
\end{lemm}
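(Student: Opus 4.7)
My plan is to combine two homotopy-module structures: that of $F_*$ coming from Corollary \ref{corr:F*-existence}, and that of $\ul{I}^*$ (together with its torsion subsheaf). By construction of $F_*$, the bonding map $\beta_n^\dagger : F_n \to (F_{n+1})_{-1}$ is the adjoint of $(u,x)\mapsto x^{\lra{u}-1}$. Separately, $\ul{I}^*$ is a homotopy module whose bonding map $\delta_n^\dagger : \ul{I}^n \to (\ul{I}^{n+1})_{-1}$ is $y\mapsto (u \mapsto (\lra{u}-1)y)$, and as noted in the proof of Proposition \ref{prop:betan-iso} this restricts to an iso $\ul{I}^n_{tor} \iso (\ul{I}^{n+1}_{tor})_{-1}$ since contraction is exact and commutes with passage to torsion. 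Finally, Lemma \ref{lemm:Fn-tors} identifies $F_{n+m}\ul{GW}^\times = 1 + \ul{I}^{n+m}_{tor}$ as subpresheaves of $\ul{GW}$, giving a set-theoretic bijection $1+w \leftrightarrow w$ that matches the unit $1$ with $0$, hence commutes with the restrictions to $t_i=1$ that define contractions.

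For existence and uniqueness of $y$: by Corollary \ref{corr:not-so-basics}(ii), the element $z := x^{(\lra{t_1}-1)\cdots(\lra{t_m}-1)}$ lies in $F_{n+m}\ul{GW}^\times(X\times\Gm^m) = 1+\ul{I}^{n+m}_{tor}(X\times\Gm^m)$. Write $z = 1+w$. Setting any $t_i=1$ kills the exponent, and $x^0 = 1$, so $w|_{t_i=1}=0$ for each $i$; thus $w \in (\ul{I}^{n+m}_{tor})_{-m}(X)$. Iterating $\delta^\dagger$ on $\ul{I}^*_{tor}$ yields an isomorphism
\[ \ul{I}^n_{tor}(X) \xrightarrow{\iso} (\ul{I}^{n+m}_{tor})_{-m}(X), \qquad y \mapsto (\lra{t_1}-1)\cdots(\lra{t_m}-1)\,y, \]
so there is a unique $y \in \ul{I}^n_{tor}(X)$ with $w = (\lra{t_1}-1)\cdots(\lra{t_m}-1)\,y$. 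The same argument applied to the full homotopy module $\ul{I}^*$ shows uniqueness even among elements of $\ul{I}^n(X)$.

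For the bijection: defining $\log_{(m)}(x):=y$ factors this map as
\[ F_n\ul{GW}^\times(X) \xrightarrow{\iso} (F_{n+m}\ul{GW}^\times)_{-m}(X) \xrightarrow{\iso} (\ul{I}^{n+m}_{tor})_{-m}(X) \xrightarrow{\iso} \ul{I}^n_{tor}(X), \]
where the first arrow $x \mapsto x^{(\lra{t_1}-1)\cdots(\lra{t_m}-1)}$ is the iterated bonding map of $F_*$, an iso by Proposition \ref{prop:betan-iso} iterated (i.e.\ by Corollary \ref{corr:F*-existence}); the second is the bijection induced by Lemma \ref{lemm:Fn-tors}; and the third is the inverse of the iterated bonding map of $\ul{I}^*_{tor}$. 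As a composite of bijections, $\log_{(m)}$ is a bijection of sets. I do not anticipate a real obstacle — the main subtlety is simply to check that the identification $F_{n+m} = 1 + \ul{I}^{n+m}_{tor}$ is compatible with the two contractions involved, which reduces to the tautology $1 + 0 = 1$.
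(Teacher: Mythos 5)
Your proposal is correct and follows essentially the same route as the paper: both arguments combine the iterated contraction description of sections over $X \times (\Aone\setminus 0)^m$ (Lemma \ref{lemm:app-contractions}), the identification $F_{n+m}\ul{GW}^\times = 1 + \ul{I}^{n+m}_{tor}$ from Lemma \ref{lemm:Fn-tors}, and the bonding isomorphisms of the homotopy modules $F_*$ and $\ul{I}^*_{tor}$ (with $\ul{I}^*$ itself used for uniqueness in $\ul{I}^n(X)$). The factorization of $\log_{(m)}$ as a composite of three bijections is exactly the paper's argument, phrased slightly more explicitly.
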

\begin{proof}
Recall that if $G_*$ is any homotopy module, then
\begin{equation*}
  G_{n+1}(X \times (\Aone \setminus 0)) = G_{n+1}(X) \oplus [t_1]G_n(X),
\end{equation*}
where $G_n(X) \to [t_1]G_n(X)$ is an
isomorphism, namely multiplication by $t_1$.
Moreover in this decomposition, the factor
$[t_1]G_n(X)$ consists precisely of those $x \in G_{n+1}(X \times (\Aone
\setminus 0))$ such that $i_1^*(x) = 0$, where $i_1: X \to X \times (\Aone
\setminus 0)$ is the inclusion of the point $1 \in \Aone$. This is the
content of Lemma
\ref{lemm:app-contractions} in the appendix.
By induction, the
map $[t_1] \dots [t_m]: G_n(X) \to G_n(X \times (\Aone \setminus 0)^m)$ is
injective, and its image consists of precisely those $x \in G_n(X \times (\Aone
\setminus 0)^m)$ such that for each $r \in \{1, \dots, m\}$ we have $j_r^*(x) =
0$, where $j_r: X \times (\Aone \setminus 0)^{m-1} \to X \times (\Aone \setminus
0)^{m}$ is the inclusion at the point $1$ in the $r$-th factor $\Aone
\setminus 0$.

Applying this to the homotopy module $F_*$ we find that
\[ \alpha: F_n(X) \to F_{n+m}(X \times (\Aone \setminus 0)^m),
    x \mapsto [t_1]\dots[t_m] x = x^{(\lra{t_1} - 1)\dots(\lra{t_m} - 1)} \]
is an injection with image consisting of those
$y \in F_{n+m}(X \times (\Aone \setminus 0)^m)
  \subset \ul{GW}(X \times (\Aone \setminus 0)^m)$ such that
$j_r^*(y) = 1$ for all $r \in \{1, 2, \dots, m\}$. Since $j_r^*$ is a ring homomorphism, we conclude
by Lemma \ref{lemm:Fn-tors} that $\alpha - 1$ is a bijection onto the subset of
$\ul{I}^{n+m}_{tor}(X \times (\Aone \setminus 0)^m)$ consisting of those elements such
that $j_r^* = 0$ for all $r$. Applying the remark from the first paragraph to
the homotopy module $\ul{I}^*_{tor}$ (for which $[u]x = (\lra{u} - 1)x$) concludes the
proof.
\end{proof}

Thus for any $x \in F_n \ul{GW}^\times(X)$ we obtain a sequence $x = \log_{(0)}(x),
\log_{(1)}(x), \log_{(2)}(x), \dots \in \ul{I}^n_{tor}(X)$. We would like to take the
``limit'' of this sequence.

For the remainder of this section, we will use the abbreviation $P_m :=
\prod_{i=1}^m (\lra{t_i} - 1) \in \ul{GW}((\Aone \setminus 0)^m)$.
\begin{rmk} \label{rmk:classical-analysis-log}
By Lemma \ref{lemm:defn-log-approx} we have $x^{P_m} = 1 + P_m \log_{(m)}(x)$. Since
multiplication by $P_m$ is injective in an appropriate sense (see the previous proof), we
may write this as
\[ \log_{(m)}(x) = (x^{P_m} - 1)/P_m. \]
We think of $P_m \in I^m$ as small, and taking the limit we propose corresponds
to the formula
\[ \lim_{\epsilon \to 0+} (x^{\epsilon} - 1)/\epsilon = \log(x) \]
from classical analysis.
\end{rmk}

\begin{lemm} \label{lemm:technical-1}
Let $L'/L$ be a degree 2 extension ($L$ of characteristic not 2).
\begin{enumerate}[(i)]
\item We have $N_{L'(t)/L(t)}(\lra{t} - 1) = -tr(L')(\lra{t} - 1)$.
\item We have $tr(L')^2 = 2tr(L')$.
\item There exists $y \in GW(L)$ such that $yN_{L'/L}(2) = 8$.
\end{enumerate}
\end{lemm}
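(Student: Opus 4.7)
The plan is that all three parts are direct computations applying Wittkop's formula (Lemma \ref{lemm:wittkop-formula}) together with the elementary identities established earlier (notably $\lra{a}^x = \lra{a}^{\dim x}$ and Proposition \ref{prop:norm-not-so-basics}(ii)). None of the three steps looks hard; the main thing to be careful about is reusing (ii) inside (iii).

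For (i), apply Wittkop's formula to $L'(t)/L(t)$ with the decomposition $\lra{t} + (-1)$. Since $\lra{t}$ and $-1$ are invariant under the nontrivial automorphism of $L'(t)/L(t)$, conjugation is trivial and I obtain
\[ N_{L'(t)/L(t)}(\lra{t} - 1) = N(\lra{t}) + N(-1) - tr_{L'(t)/L(t)}(\lra{t}). \]
Now $N(\lra{t}) = \lra{t}^{tr(L'(t)/L(t))} = \lra{t}^{\dim tr(L')} = \lra{t}^2 = 1$ by the formula $\lra{a}^x = \lra{a}^{\dim x}$ recalled in the introduction, while $N(-1) = tr(L') - 1$ by Proposition \ref{prop:norm-not-so-basics}(ii), and the projection formula gives $tr_{L'(t)/L(t)}(\lra{t}) = \lra{t}\, tr(L')$. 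Summing yields $tr(L') - \lra{t}\, tr(L') = -tr(L')(\lra{t} - 1)$, as required.

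For (ii), I split into the two cases for a degree-$2$ étale extension. If $L' = L \times L$, then $tr(L') = 2$ and the identity $4 = 2 \cdot 2$ is trivial. If $L' = L(\sqrt{a})$, then $tr(L') = \lra{2}(1+\lra{a})$, and I compute directly: $\lra{2}^2 = 1$ and $\lra{a}^2 = 1$, so
\[ tr(L')^2 = (1 + \lra{a})^2 = 2 + 2\lra{a}, \]
while $2\, tr(L') = (2\lra{2})(1+\lra{a}) = 2(1+\lra{a}) = 2 + 2\lra{a}$, using the well-known identity $2\lra{2} = 2$ in $GW(L)$ (which follows from $\lra{2} + \lra{2} = \lra{4} + \lra{16} = 1 + 1$).

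For (iii), first compute $N_{L'/L}(2)$ via Wittkop with $2 = 1 + 1$:
\[ N_{L'/L}(2) = N(1) + N(1) + tr_{L'/L}(1) = 2 + tr(L'). \]
Now I claim $y := 4 - tr(L')$ works. Using (ii),
\[ (4 - tr(L'))(2 + tr(L')) = 8 + 4\, tr(L') - 2\, tr(L') - tr(L')^2 = 8 + 2\, tr(L') - 2\, tr(L') = 8, \]
which finishes the proof. The only mildly subtle point is that (iii) is not purely formal: it genuinely uses the quadratic relation $tr(L')^2 = 2\, tr(L')$ from (ii), which is why the lemma bundles these three statements together.
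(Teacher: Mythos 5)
Your argument is essentially correct, and parts (i) and (iii) follow the paper's own route almost verbatim (same Wittkop decompositions, same $y = 4 - tr(L')$, which the paper obtains as $6 - N(2)$ from the quadratic relation $\xi(6-\xi)=8$). Part (ii) is where you genuinely diverge: the paper observes that $tr(L') - 1 = N_{L'/L}(-1)$ squares to $N(1) = 1$, so $tr(L')^2 = 2\,tr(L')$ falls out in one line and uniformly in both cases; your case-by-case computation via $tr(L') = \lra{2}(1+\lra{a})$ works too but is longer and needs the extra input $2\lra{2}=2$.

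Two justification slips to fix. First, your parenthetical proof of $2\lra{2}=2$ is wrong as written: $\lra{2} \ne \lra{4}$ in general (only $\lra{4} = \lra{2^2} = 1$), so the chain $\lra{2}+\lra{2} = \lra{4}+\lra{16}$ is not a valid step. The correct argument is the isometry $\lra{1,1} \iso \lra{2,2}$ coming from $(x+y)^2 + (x-y)^2 = 2(x^2+y^2)$; this is exactly Lemma \ref{lemm:2-vanishing}. Second, in (i) you justify $N(\lra{t}) = 1$ by the formula $\lra{a}^x = \lra{a}^{\dim(x)}$ "recalled in the introduction" — but that formula is only asserted there ("one may check"), and the instance you need, $N_{L'(t)/L(t)}(\lra{t}) = \lra{t^2} = 1$ for a quadratic field extension, is precisely the nontrivial content; it should be proved or cited (the paper points to Wittkop, Lemma 2.6(ii); the split case $L' = L\times L$ is immediate from Proposition \ref{prop:norm-fold}). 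With those two citations repaired the proof is complete.
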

\begin{proof}
(i) By Lemma \ref{lemm:wittkop-formula} we get $N(\lra{t} - 1) =
N(\lra{t}) - tr(1)\lra{t} + N(-1)$, where we have used that
$tr(-\overline{\lra{t}}) = -\lra{t}tr(1)$. Note that $N(\lra{t}) = \lra{t^2} =
1$:
this is clear if $A = k \times k$, and for the quadratic case see \cite[Lemma
2.6(ii)]{wittkop2006multiplikative}. Since $N(-1) = (-1)^{tr(1)} = tr(1) - 1$ by
Proposition \ref{prop:norm-not-so-basics} part (ii), the result follows from the
observation that $tr_{L'(t)/L(t)}(1) = tr_{L'/L}(1)|_{L(t)}$, i.e. the base
change formula.

(ii) Since $tr(L') - 1 = N(-1)$ we get $(tr(L')- 1)^2 = 1$. The result follows.

(iii) We have $N(2) = N(1 + 1) = 2 + tr(1)$ by Lemma \ref{lemm:wittkop-formula}
again. Thus if we put $\xi = N(2)$ then by
(ii) we find $(\xi - 2)^2 = 2(\xi - 2)$ which implies that $\xi(6-\xi) = 8$.
\end{proof}

\begin{corr} \label{corr:technical-2}
Let $L'/L$ be a degree 2 extension with $char(L) \ne 2$, $x \in GW(L)$ and $2^rx = 0$. Then for $m > 3r$ we have
\[ N_{L'(t_1, \dots, t_m)/L(t_1, \dots, t_m)}(1 + P_mx) = 1 + P_m tr_{L'/L}(x). \]
\end{corr}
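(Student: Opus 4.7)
The plan is to expand the norm directly using Wittkop's formula (Lemma \ref{lemm:wittkop-formula}) and verify that all terms other than the expected one vanish under the hypothesis $m > 3r$. Write $E = L'(t_1,\dots,t_m)$ and $F = L(t_1,\dots,t_m)$. Since $1$ and $P_m x$ come from the base $F$, conjugation of the degree-$2$ extension $E/F$ acts trivially on their images in $GW(E)$. Applying Wittkop with $a=1$, $b = P_m x|_E$ gives
\[
N_{E/F}(1+P_m x) = 1 + N_{E/F}(P_m x|_E) + tr_{E/F}(P_m x|_E).
\]
The trace term equals $P_m \cdot tr_{L'/L}(x)$ by the projection formula together with base change of $tr(L')$, so it suffices to show that $N_{E/F}(P_m x|_E)=0$.

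Since $N_{E/F}$ is multiplicative, $N_{E/F}(P_m x|_E) = N_{E/F}(P_m|_E) \cdot N_{E/F}(x|_E)$. Part (i) of Lemma \ref{lemm:technical-1}, combined with base change from each $L'(t_i)/L(t_i)$ up to $E/F$, yields $N_{E/F}((\lra{t_i}-1)|_E) = -tr(L')(\lra{t_i}-1)$; multiplying over $i$ and using part (ii) of the same lemma inductively gives $N_{E/F}(P_m|_E) = (-tr(L'))^m P_m = (-1)^m 2^{m-1} tr(L') P_m$. Base change also identifies $N_{E/F}(x|_E) = (N_{L'/L}(x|_{L'}))|_F$; abbreviate this as $Nx$. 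Thus
\[
N_{E/F}(P_m x|_E) = (-1)^m 2^{m-1} \, tr(L') \, P_m \cdot Nx.
\]

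The main step is then to prove the torsion estimate $2^{3r} Nx = 0$; this is what forces the bound $m > 3r$. From $0 = 0 + 0$ Wittkop gives $N(0) = 2N(0)$, hence $N(0)=0$, and multiplicativity of $N_{L'/L}$ yields $Nx \cdot N_{L'/L}(2^r) = N_{L'/L}(2^r x) = 0$. Again Wittkop computes $N_{L'/L}(2) = N(1+1) = 2 + tr(L')$, so $N_{L'/L}(2^r) = (2+tr(L'))^r$. Using Lemma \ref{lemm:technical-1}(iii), pick $y \in GW(L)$ with $y \cdot N_{L'/L}(2) = 8$; then $y^r \cdot N_{L'/L}(2^r) = (yN_{L'/L}(2))^r = 8^r = 2^{3r}$. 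Multiplying the identity $Nx \cdot N_{L'/L}(2^r) = 0$ by $y^r$ produces $2^{3r} Nx = 0$.

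Finally, for $m > 3r$ we have $m-1 \geq 3r$, so $2^{m-1} Nx = 0$, and consequently $2^{m-1} tr(L') P_m \cdot Nx = 0$. Hence $N_{E/F}(P_m x|_E) = 0$ and plugging back into the Wittkop expansion delivers the claimed identity $N_{E/F}(1+P_m x) = 1 + P_m \cdot tr_{L'/L}(x)$. The only substantive difficulty is the torsion bound on $Nx$; all other steps are formal manipulations using multiplicativity of the norm, base change, and the three computations packaged in Lemma \ref{lemm:technical-1}.
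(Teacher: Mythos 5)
Your proof is correct and follows essentially the same route as the paper: expand via Wittkop's formula, identify the trace term by base change and projection, compute $N(P_m) = (-1)^m 2^{m-1}tr(L')P_m$ from Lemma \ref{lemm:technical-1}(i)--(ii), and kill the norm term using the bound $2^{3r}N_{L'/L}(x)=0$ obtained from Lemma \ref{lemm:technical-1}(iii). No gaps; the extra care you take with base-change identifications is a welcome elaboration of the paper's terser argument.
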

\begin{proof}
Using Lemma \ref{lemm:wittkop-formula} we compute that $N(1 + P_m
x) = 1 + tr(P_m x) + N(P_m)N(x)$. Noting that $tr_{L'(t_1, \dots, t_m)/L(t_1,
\dots, t_m)}(P_m x) = P_m tr_{L'/L}(x)$ by the base change and projection
formulas, we thus need to show that $N(P_m)N(x) = 0$.
Since $2^r x = 0$ we get $0 = N(2^r x) = N(2^r) N(x)$, and hence by Lemma
\ref{lemm:technical-1} part (iii) we find that $8^r N(x) = 0$. I claim that
$N(P_m) = (-1)^m 2^{m-1} tr(L') P_m$. Since $m > 3r$ the claim implies that $N(P_m)$
is divisible by $8^r$ and hence $N(P_m)N(x) = 0$ as needed.

To prove the claim, note that $N(P_m) = \prod_{i=1}^m N(\lra{t_i} - 1) =
(-tr(L'))^m P_m$, by Lemma \ref{lemm:technical-1} part (i), and $(-tr(L'))^m =
(-1)^m 2^{m-1} tr(L')$ by Lemma \ref{lemm:technical-1} part (ii).
\end{proof}

\begin{thm}\label{thm:log-module-transfers}
For $n \ge 2$ and $x \in F_n \ul{GW}^\times(X)$, the sequence $\log_{(m)}(x) \in
\ul{I}^n_{tor}(x)$ is eventually constant. Write $\log(x)$ for this eventual
value.
This defines an isomorphisms of homotopy modules $\log: F_* \to
\ul{I}_{tor}^*$ such that for any $n \ge 2$ and any finite separable extension
$L'/L$ with $L$ of
finite transcendence degree over $k$, the following diagram commutes
\begin{equation} \label{eq:compat-tr}
\begin{CD}
F_n GW^\times(L') = F_n(L') @>\log>> I^n_{tor}(L') \\
@VNVV               @V{tr}VV \\
F_n GW^\times(L) = F_n(L) @>\log>> I^n_{tor}(L).
\end{CD}
\end{equation}
Here on the left hand side $N$ denotes Rost's multiplicative transfer,
whereas on the right hand side $tr$ denotes the cohomological transfer present
on any homotopy module, see Appendix \ref{sec:app-htpy-mod}. In particular the cohomological
transfer on $F_n$ coincides with Rost's multiplicative transfer.
\end{thm}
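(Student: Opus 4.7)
The plan is to establish each claim in turn, reducing via unramifiedness of $F_n \ul{GW}^\times$ and $\ul{I}^n_{tor}$ to sections over a field $L$ of finite transcendence degree over $k$, and then using a continuity argument (as in the proofs of Theorem \ref{thm:GW-times-strictly-htpy-inv} and Proposition \ref{prop:betan-iso}) to further assume $k$ is perfect with $vcd_2(k) < \infty$. This last assumption is what makes Elman--Lum (Lemma \ref{lemm:Itors-vanishing}) available: $\ul{I}^M_{tor}$ will vanish on every field that appears as soon as $M$ is large enough.

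For eventual constancy, I will exploit the identity
\[ x^{P_{m+1}} = (x^{P_m})^{\lra{t_{m+1}}-1} = (1 + P_m \log_{(m)}(x))^{\lra{t_{m+1}}-1}. \]
Proposition \ref{prop:norm-not-so-basics}(i) rewrites the right-hand side as $1 + P_{m+1}\log_{(m)}(x)$ modulo $\ul{I}^{2(n+m)}$. Comparing with $1 + P_{m+1}\log_{(m+1)}(x)$ yields
\[ P_{m+1}\bigl(\log_{(m+1)}(x) - \log_{(m)}(x)\bigr) \in \ul{I}^{2n+2m}_{tor}, \]
which vanishes for $m$ large by Elman--Lum. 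Injectivity of multiplication by $P_{m+1}$ on $\ul{I}^n_{tor}$ (established in the proof of Lemma \ref{lemm:defn-log-approx}) then forces $\log_{(m+1)}(x) = \log_{(m)}(x)$. Writing $\log(x)$ for the stable value, the same kind of trick applied to $(xy)^{P_m} = x^{P_m}y^{P_m}$ produces
\[ P_m\bigl(\log_{(m)}(xy) - \log_{(m)}(x) - \log_{(m)}(y)\bigr) = P_m^2\log_{(m)}(x)\log_{(m)}(y) \in \ul{I}^{2n+2m}_{tor}, \]
again vanishing for $m$ large, so $\log$ is additive (and bijective by Lemma \ref{lemm:defn-log-approx}). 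Finally, $\log$ commutes with the bonding maps: expanding $(x^{\lra{t}-1})^{P_m} = x^{(\lra{t}-1)P_m}$ shows $\log_{(m)}(x^{\lra{t}-1}) = (\lra{t}-1)\log_{(m+1)}(x)$, so $\log(x^{\lra{t}-1}) = (\lra{t}-1)\log(x)$ in the limit. Together these facts make $\log$ an isomorphism of homotopy modules.

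For the compatibility \eqref{eq:compat-tr} with Rost's norm, I will first treat a degree $2$ separable extension $L'/L$. Setting $v := \log_{(m)}(w) \in \ul{I}^n_{tor}(L')$ for $w \in F_n GW^\times(L')$, I will compute $N_{L'(t_i)/L(t_i)}(w^{P_m})$ in two ways. On the one hand, the projection formula of Proposition \ref{prop:GW-module-existence-basics}(iv), together with $P_m$ being defined over $L$, gives
\[ N(w^{P_m}) = N(w)^{P_m} = 1 + P_m \log_{(m)}(N(w)). \]
On the other hand, Wittkop's formula (Lemma \ref{lemm:wittkop-formula}) applied exactly as in the proof of Corollary \ref{corr:technical-2} yields $N(1 + P_m v) = 1 + P_m tr_{L'/L}(v)$ for $m$ large; the only modification needed is that $v$ now lies in $GW(L')$ rather than $GW(L)$, but $N_{L'/L}(v)$ is still $8^r$-torsion whenever $v$ is $2^r$-torsion (since $N(2^r v) = 0$ forces $N(2)^r N(v) = 0$ and then Lemma \ref{lemm:technical-1}(iii) applies), so the argument goes through unchanged. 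Equating the two expressions and cancelling $P_m$ by injectivity gives $\log_{(m)}(N(w)) = tr_{L'/L}(\log_{(m)}(w))$, and taking $m \to \infty$ yields \eqref{eq:compat-tr} in the degree $2$ case. A general finite separable $L'/L$ reduces to a tower of degree $2$ extensions via an odd-degree base change and a Sylow argument, exactly as in the proof of Proposition \ref{prop:norm-basics}, using transitivity of $N$, $tr$, and $\log$. The final assertion is then formal: since $\log$ is an isomorphism of homotopy modules it intertwines the cohomological transfers on $F_*$ and $\ul{I}^*_{tor}$; we have just shown it also intertwines Rost's norm with the Scharlau transfer on $\ul{I}^n_{tor}$ (which is the cohomological transfer there); whence Rost's norm coincides with the cohomological transfer on $F_*$.

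The main obstacle I expect is the mild generalization of Corollary \ref{corr:technical-2} to $v \in GW(L')$, together with the bookkeeping needed to ensure that $m$ can be chosen uniformly large enough for both Elman--Lum vanishing and the hypothesis $m > 3r$; once those are in hand the rest is essentially formal manipulation.
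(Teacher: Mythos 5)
Your proposal is correct, and its overall architecture matches the paper's: reduce to perfect $k$ with $vcd_2(k)<\infty$ by continuity, use Lemma \ref{lemm:defn-log-approx} for well-definedness and bijectivity of $\log_{(m)}$, prove transfer compatibility by reducing to degree $2$ via the Sylow/odd-degree argument of Proposition \ref{prop:norm-basics} and then invoking Corollary \ref{corr:technical-2} (whose proof, as you correctly note, works verbatim for $v\in GW(L')$ --- the paper in fact already uses it in that generality in its Step 4). The one genuine difference is how you get eventual constancy and additivity: the paper first proves the exact identity $(1+P_mx')^y = 1+yP_mx'$ for $m>3r$ (its Step 2, via Corollary \ref{corr:technical-2}) and deduces stabilization from it, whereas you use only the approximate congruence of Proposition \ref{prop:norm-not-so-basics}(i), observe that the discrepancy is a torsion class in $\ul{I}^{2n+2m}$, and kill it by Elman--Lum since $2n+2m$ eventually exceeds $vcd_2(L)+m+1$. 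Your route is slightly more economical for those two steps (it postpones Corollary \ref{corr:technical-2} to the transfer statement only), at the cost of having to track that the relevant error terms are torsion and that the degree $2n+2m$ outpaces the virtual cohomological dimension of $L(t_1,\dots,t_{m+1})$; the paper's route gives the exact module-homomorphism property of $\log_{(m)}$ as a byproduct, which it then reuses for the bonding-map compatibility (your direct computation with $(x^{\lra{t}-1})^{P_m}=x^{(\lra{t}-1)P_m}$ is an acceptable substitute). Both versions silently identify the cohomological transfer on $\ul{I}^n_{tor}$ with the Scharlau/Arason transfer, so no gap there relative to the paper.
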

\begin{proof}
It suffices to show the following: (a) for each $n \ge 2$ there is a well-defined morphism of
sheaves of abelian groups $\log: F_n \to \ul{I}^n_{tor}$, (b) the maps $\log$
are compatible with the isomorphisms $F_n \xrightarrow{\iso}
(F_{n+1})_{-1}$ and $\ul{I}^n \xrightarrow{\iso} (\ul{I}^{n+1})_{-1}$. This
implies that there is indeed a morphism of homotopy modules as stated. We then
need to check for each $n \ge 2$ that (c) the map $\log: F_n \to \ul{I}^n_{tor}$ is an
isomorphism, and that it is (d) compatible with the  transfers, in the sense of
equation \eqref{eq:compat-tr}.

For now we shall assume that $vcd_2(k) < \infty$. We will remove this assumption
at the end by a continuity argument. Let us also fix $n \ge 2$, $X \in Sm(k)$ and $r > 0$
such that $2^r \ul{I}^n_{tor}(X) = 0$. Such an $r$ exists by unramifiedness of $\ul{I}^n_{tor}$
and Lemma \ref{lemm:Itors-vanishing}.

\paragraph{Step 1.} \emph{If $m>r$, then $\log_{(m)}: F_*(X) \to
\ul{I}^*_{tor}(X)$ is a homomorphism of abelian groups.}
To see this, suppose that $x, y \in F_n(X) = F_n
\ul{GW}^\times(X)$. Then $x^{P_m} = 1 + P_m x', y^{P_m} = 1 + P_m y'$ for some (unique) $x',
y' \in \ul{I}^n_{tor}(X)$, and by definition $\log_{(m)}(xy) = z'$, for the
unique element $z' \in \ul{I}^n_{tor}(X)$ such that $(xy)^{P_m} = 1 + P_m z'$. We thus
need to show that $(1 + P_m x')(1 + P_m y') = 1 + P_m(x' + y')$.
Note that for any $x', y' \in \ul{I}^n_{tor}(X)$
we have $(1 + P_m x')(1 + P_m y') = 1 + P_m(x' + y') + P_m^2x'y'$. Since
$(\lra{t_i} - 1)^2 = -2(\lra{t_i}-1)$ we find that $P_m^2$ is divisible by $2^m$, and
hence $P_m^2 x' = 0$. This proves the claim.

\paragraph{Step 2.} \emph{For $m > 3r$, the map $\log_{(m)}: F_n(X) \to
\ul{I}^n_{tor}(X)$ is a homomorphism of $\ul{GW}(X)$-modules.}
In other words we need to show that for
$x \in F_n \ul{GW}^\times(X)$
and $y \in \ul{GW}(X)$ we have $y\log_{(m)}(x) =
\log_{(m)}(x^y)$.
By definition we have $x^{P_m} = 1 + P_m \log_{(m)}(x)$. Now
\[ 1 + P_m \log_{(m)}(x^y) = (x^y)^{P_m} = (x^{P_m})^y = (1 + P_m
\log_{(m)}(x))^y \] and so it is enough to show that $(1 + P_m x')^y = 1
+ y P_m x'$ for every $x' \in \ul{I}^n_{tor}(X)$. By unramifiedness, we may
assume that $X$ is the spectrum of a field $L$. Since $GW(L)$ is generated as an
abelian group by $1$ and the traces of quadratic extensions, and $\log_{(m)}$ is
a homomorphism of abelian groups by step 1, it suffices to
prove that for $L'/L$ quadratic we have $(1 + P_m x')^{tr(L')} = 1
+ tr(L') P_m x'$. By the base change and projection formulas, the left hand side equals
$N_{L'(t_1, \dots, t_m)/L(t_1, \dots, t_m)}(1 + P_m x')$.
Since $2^r x' = 0$ by assumption, the claim now follows from Corollary
\ref{corr:technical-2} and the projection formula.

\paragraph{Step 3.} \emph{The sequence $\log_{(m)}(x)$ for $x \in F_n(X)$ is
eventually constant.} Let $m > 3r$. We compute
\[ x^{P_{m+1}} = (x^{P_m})^{\lra{t_{m+1}} - 1}
      = (1 + P_m \log_{(m)}(x))^{\lra{t_{m+1}} - 1}
      = 1 + (\lra{t_{m+1}} - 1) P_m \log_{(m)}(x), \]
where in the last equality we have used step 2.  
In other words we have found that $1 + P_{m+1}\log_{(m+1)}(x) = 1 +
P_{m+1}\log_{(m)} x$. By the uniqueness part
of Lemma \ref{lemm:defn-log-approx}, this means that $\log_{(m+1)}(x) =
\log_{(m)}x$, which proves the claim.

We conclude that the limiting map $\log: F_n \to \ul{I}^n_{tor}$ exists. It is
easy to check that it is a homomorphism of (pre)sheaves of sets. By step 1 it is a
homomorphism of sheaves of abelian groups.

\paragraph{Step 4.} \emph{The limiting map $\log: F_n(X) \to \ul{I}^n_{tor}(X)$
is compatible with transfers in the sense of equation \eqref{eq:compat-tr}.}
By
unramifiedness we may assume that $X$ is the spectrum of a field.
We need to prove that two elements in the lower right hand corner of
equation \eqref{eq:compat-tr} are equal. I claim that we may assume that $L' =
\prod_i L_i$, where each $L_i$ is an iterated quadratic extension over $L$.
For this we use that (a) $\log$ is a map of presheaves, (b) transfers commute
with base change, (c) $I^n(L) \to I^n(L')$ is injective if $[L':L]$ is odd (by
Lemma \ref{lemm:odd-degree-base-change}), and then apply the argument from the
proof of Proposition \ref{prop:norm-basics}.
Now that we have reduced to $L' = \prod_i L_i$,
the result follows from Corollary \ref{corr:technical-2}.

\paragraph{Step 5.} \emph{$\log$ is a morphism of homotopy modules.}
In other words $\log$ is
compatible with the isomorphisms $\beta_n^\dagger: F_n \to (F_{n+1})_{-1}$. To
see this it is enough to show that $\log([u] x) = [u] \log(x)$. But
\[ \log([u]x) = \log(x^{\lra{u}-1}) = (\lra{u}-1)\log(x) = [u]\log(x). \]
Here the first and last equality are by definition, and the middle one is
because $\log$ is a homomorphism of $GW$-modules, by step 2.

\paragraph{Conclusion of proof for $vcd_2(k) < \infty$.}
We have already established (a), and (b) is step 5.
It is clear that for each $X$, $\log(X)$ is a bijection of sets,
because each of the maps $\log_{(m)}(X)$ is a bijection. This proves (c).
Property (d) was established in step 4.

\paragraph{Step 6.} \emph{If $L$ is any field of characteristic $\ne 2$, and $x
\in F_n GW^\times(L)$, then the sequence $\log_{(m)}(x)$ is eventually
constant.}
By Corollary \ref{corr:app-cont}
there exist a subfield $l \subset L$ which is finitely generated over the prime
field and $y \in F_n GW^\times(l)$, such that $x = y|_L$. By step 3,
$\log_{(m)}(y)$ is eventually constant, and hence so is $\log_{(m)}(x) =
\log_{(m)}(y|_L) = \log_{(m)}(y)|_L$.

\paragraph{Conclusion for general $k$.}
We use the standard continuity argument. It follows from step 6 that $\log: F_* \to
\ul{I}^*_{tor}$ is a well-defined map (of sheaves of sets, say). Then by
Corollary \ref{corr:app-cont} from Appendix \ref{sec:app-cont} we have
$\log(L) = \colim_l \log(l)$, where the colimit is over
$l \subset L$ which are finitely generated over the prime field. Thus $vcd_2(l)
< \infty$ for all such $l$, and hence $\log(l)$ is an isomorphism with all
the desired properties (respects module structures and transfers), since we have
already established the theorem in this case. It follows that the colimit
$\log(L)$ has all the desired properties, too.

This concludes the proof.
\end{proof}

\begin{rmk}
Let $k$ be a not necessarily perfect field and $p: Spec(k) \to Spec(k_0)$ be a
morphism to a perfect field. Then $F_* \iso p^*(F_*|_{k_0})$ and $\ul{I}^*_{tor}
\iso p^*(\ul{I}^*_{tor}|_{k_0})$, by the standard continuity arguments. It
follows that the map $\log: F_* \to \ul{I}^*_{tor}$ is a well-defined
isomorphism of homotopy modules over $k$. If $G_*$ is a homotopy module over $k$
and $k$ is not perfect, then it is not clear (to the author) how to define
transfers on $G_*$. If however $G_*$ is a homotopy module over
$k_0$ then $p^* G_*$ is a homotopy module over $k$ which has canonical
transfers. This applies to $F_*$ and $\ul{I}^*_{tor}$, and we thus see that all
the claims of Theorem \ref{thm:log-module-transfers} make sense and hold over
$k$.
\end{rmk}

\section{Delooping $\ul{GW}^\times$}
\label{sec:delooping}

In this section, we finally put everything together: we shall construct a
homotopy module $T_*$ such that $T_0 = \ul{GW}^\times$. We do this by taking the
presentation
\[ I^2(k) \xrightarrow{a} GW(k)/2 \oplus F_2 GW^\times(k) \to GW^\times(k) \to 1 \]
from Proposition \ref{prop:GW-times-presentation} and exhibiting a map
$\tilde{a}: \ul{I}^{*+2} \to \ul{K}_*^{MW}/2 \oplus F_{*+2}$ of homotopy modules
such that $a = (\tilde{a})_0$.

\begin{lemm} \label{lemm:2-vanishing}
For any field $L$ of characteristic different from 2
we have the equality $2(\lra{2} - 1) = 0 \in GW(L)$.
\end{lemm}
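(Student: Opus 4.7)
The claim $2(\langle 2 \rangle - 1) = 0 \in GW(L)$ is equivalent to the equality of bilinear forms $\langle 1, 1 \rangle = \langle 2, 2 \rangle$ in $GW(L)$, so the plan is to exhibit an explicit isometry between these two rank-$2$ diagonal forms. Since $\operatorname{char}(L) \neq 2$, the element $2 \in L^\times$ is invertible, which is what makes the construction possible.

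The key step is the classical change of variables: the $L$-linear map $L^2 \to L^2$ given by the matrix
\[ M = \begin{pmatrix} 1 & 1 \\ 1 & -1 \end{pmatrix} \]
carries the bilinear form $\langle 1, 1 \rangle$, whose Gram matrix is the identity, to the form with Gram matrix $M^{t} I M = \begin{pmatrix} 2 & 0 \\ 0 & 2 \end{pmatrix}$, i.e.\ to $\langle 2, 2 \rangle$. Since $\det(M) = -2 \in L^\times$ (here using $\operatorname{char}(L) \neq 2$), $M$ is an isomorphism of vector bundles, hence an isometry of bilinear forms. This gives $\langle 1 \rangle + \langle 1 \rangle = \langle 2 \rangle + \langle 2 \rangle$ in $Iso(Bil(L))$, hence $2 = 2\langle 2 \rangle$ in $GW(L)$, which rearranges to the desired identity.

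There is no real obstacle: the proof is a one-line calculation with the matrix $M$. An equivalent presentation would be to invoke Witt's identity $\langle a, b \rangle = \langle a+b,\, ab(a+b) \rangle$ (valid whenever $a+b \neq 0$) with $a = b = 1$, which is allowed precisely because $\operatorname{char}(L) \neq 2$; this yields $\langle 1, 1 \rangle = \langle 2, 2 \rangle$ directly.
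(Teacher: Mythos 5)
Your proof is correct and is essentially the paper's argument: the paper cites the identity $(x+y)^2 + (x-y)^2 = 2(x^2+y^2)$, which encodes exactly the change of variables by your matrix $M$, yielding $\lra{1,1} \iso \lra{2,2}$ and hence $2\lra{2} = 2$ in $GW(L)$.
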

\begin{proof}
Since $(x+y)^2 + (x-y)^2 = 2(x^2 + y^2)$ we have $\lra{1,1} = \lra{2,2}$. The
result follows.
\end{proof}

\begin{lemm} \label{lemm:identify-1-exp}
The maps $\ul{I}^n \to F_n \ul{GW}^\times, x \mapsto (-1)^x$ assemble into a
morphism of homotopy modules $\ul{I}^* \to F_*$. The composite
\[ \ul{I}^* \to F_* \xrightarrow{\log} \ul{I}^*_{tor} \]
is multiplication by $\lra{2} - 1$.
\end{lemm}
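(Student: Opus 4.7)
The plan is as follows. For the first claim, Corollary~\ref{corr:not-so-basics}(ii) applied to $-1 \in F_0 \ul{GW}^\times$ shows that $(-1)^x \in F_n \ul{GW}^\times$ whenever $x \in \ul{I}^n$, and the module axioms give that $x \mapsto (-1)^x$ is an abelian group homomorphism. Compatibility with the bonding maps---multiplication by $\lra{u}-1$ on $\ul{I}^*$, exponentiation by $\lra{u}-1$ on $F_*$---reduces to $(-1)^{(\lra{u}-1)x} = ((-1)^x)^{\lra{u}-1}$, which is the module axiom $z^{ab} = (z^a)^b$. This assembles into a morphism of homotopy modules $\phi : \ul{I}^* \to F_*$.

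For the second claim, write $\zeta = \lra{2}-1$. From Lemma~\ref{lemm:2-vanishing} we have $2\zeta = 0$, whence $\zeta^2 = \lra{4} - 2\lra{2} + 1 = 2 - 2\lra{2} = 0$ and $2\lra{2} = 2$. In particular, multiplication by $\zeta$ defines a morphism of homotopy modules $\ul{I}^* \to \ul{I}^*_{tor}$, since $\zeta x \in \ul{I}^{n+1}_{tor} \subset \ul{I}^n_{tor}$ for $x \in \ul{I}^n$. As both $\log\circ\phi$ and $\zeta\cdot(-)$ are morphisms of homotopy modules, it will suffice to check equality at a single degree. At degree $2$, by unramifiedness it suffices to check on sections over each field $L$, and as $I^2(L)$ is additively generated by $2$-fold Pfisters $\alpha\beta$ with $\alpha = \lra{a}-1, \beta = \lra{b}-1$, the problem reduces to proving $\log((-1)^{\alpha\beta}) = \zeta\alpha\beta$ in $I^2_{tor}(L)$.

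The core will be the $GW$-ring identity $(-1)^{\alpha\beta} = 1 + \zeta\alpha\beta$. With $A = L(\sqrt{a}), B = L(\sqrt{b})$, the formulas $tr(A) = \lra{2}(1 + \lra{a})$ and $2\lra{2} = 2$ yield $tr(A) = 2 + \lra{2}\alpha$, and hence $tr(A)tr(B) = 4 + 2(\alpha+\beta) + \alpha\beta$. A direct application of Lemma~\ref{lemm:wittkop-formula}, using $\lra{c}^y = \lra{c}^{\dim y}$ and Proposition~\ref{prop:norm-not-so-basics}(ii), gives
\[ (-1)^{tr(A)tr(B)} = (tr(A)-1)^{tr(B)} = 1 + \zeta \cdot tr(A)tr(B), \]
which $2\zeta = \zeta^2 = 0$ collapses to $1 + \zeta\alpha\beta$. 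Meanwhile $(-1)^{tr(A)tr(B) - \alpha\beta} = (-1)^{4 + 2(\alpha+\beta)} = 1$, since $(-1)^{2x} = ((-1)^2)^x = 1$. Dividing yields the target identity. By linearity it then extends to all $y \in I^2(L)$: $(-1)^y = 1 + \zeta y$ (the right side is a homomorphism thanks to $\zeta^2 = 0$). Applying this with $y = P_m \alpha\beta \in I^{m+2}$ for $P_m = \prod_{i=1}^m(\lra{t_i}-1)$ gives $((-1)^{\alpha\beta})^{P_m} = 1 + P_m(\zeta\alpha\beta)$, so Lemma~\ref{lemm:defn-log-approx} forces $\log_{(m)}((-1)^{\alpha\beta}) = \zeta\alpha\beta$ for every $m \ge 1$, and hence $\log((-1)^{\alpha\beta}) = \zeta\alpha\beta$. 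The principal obstacle will be the ring identity $(-1)^{\alpha\beta} = 1 + \zeta\alpha\beta$ itself, whose derivation demands careful bookkeeping using the relations $2\lra{2} = 2$, $2\zeta = 0$, and $\zeta^2 = 0$ to kill unwanted terms at each stage.
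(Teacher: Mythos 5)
Your proof is correct and follows the paper's overall strategy (reduce to a single degree via the homotopy-module formalism, then to Pfister generators $(\lra{a}-1)(\lra{b}-1)$ of $I^2$, then compute with Wittkop's formula), but your endgame is genuinely cleaner than the paper's. Where the paper, after establishing $(-1)^{\alpha\beta} = 1 + \xi$ with $\xi = (\lra{2}-1)\alpha\beta$, runs a second computation showing $(1+\xi)^y = 1 + y\xi$ for \emph{all} $y \in GW(L')$ (reducing to traces of quadratic extensions and proving $\xi^{tr(T)} = 0$), you simply observe that the identity $(-1)^y = 1 + (\lra{2}-1)y$, once proved for all $y \in I^2$ of an arbitrary field, applies directly to $y = P_m\alpha\beta \in I^2(L(t_1,\dots,t_m))$; combined with $(z^a)^b = z^{ab}$ and the uniqueness in Lemma \ref{lemm:defn-log-approx}, this pins down $\log_{(m)}$ in one stroke. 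That is a real simplification. One point where you should be more careful: the assertion that two morphisms of homotopy modules agreeing in a single degree must agree everywhere is false in general (agreement propagates downward via contraction, but not upward). The paper supplies the missing ingredient: a morphism of homotopy modules out of $\ul{I}^* \iso \ul{K}^W_*$ is determined by the image of $1 \in \ul{I}^0 = \ul{W}$, because $\ul{K}^{MW}_* \to \ul{I}^*$ is surjective and $f(y\cdot 1) = y\cdot f(1)$; agreement in degree $2$ then forces agreement in degree $0$ by contracting twice, and hence everywhere. With that sentence added, your argument is complete; your intermediate identity $(tr(A)-1)^{tr(B)} = 1 + (\lra{2}-1)\,tr(A)tr(B)$ checks out and is equivalent to the chain of manipulations in the paper's displayed computation.
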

Let us note that multiplication by $\lra{2} - 1$ is the zero map if $\sqrt{2}
\in k$. Note also that in any case $\lra{2} - 1 \in GW(k)_{tor}$ by Lemma
\ref{lemm:2-vanishing}.
\begin{proof}
We have $(-1)^{\ul{I}^n} \subset F_n \ul{GW}^\times$ by Proposition
\ref{prop:norm-not-so-basics} part (iv), so the map exists as claimed. In order
to see that we have a morphism of homotopy modules, we need to show that for $x
\in \ul{I}^n(L)$ and $u \in L^\times$ we have $(-1)^{[u]x} = [u]((-1)^x)$. Now
$[u]x = (\lra{u} - 1)x$ by the definition of $\ul{I}^*$ and so $(-1)^{[u]x} =
((-1)^x)^{\lra{u} - 1}$, which equals
$[u]((-1)^x)$ by the definition of $F_*$ (see Corollary \ref{corr:F*-existence}), as needed.

Recall that $\ul{I}^* \iso \ul{K}_*^W = \ul{K}_*^{MW}/h$, cf.
\cite{morel2004puissances}. Consider the composite map of homotopy modules
$\ul{K}^{W}_* \iso \ul{I}^* \to F_* \xrightarrow{\log} \ul{I}^*_{tor}$. Let
$x \in I^0_{tor}(k) = W(k)_{tor} \subset W(k)$ be the image of $1 \in
K^{W}_0(k)$. I claim that the composite $f: \ul{I}^* \to F_* \xrightarrow{\log}
\ul{I}^*_{tor}$ is given by multiplication by $x$. Indeed if $y \in K^{MW}_*(L)$
then $f(y \cdot 1) = y \cdot f(1) = y \cdot x$, this being a map of homotopy
modules. Since $\ul{K}^{MW}_* \to \ul{I}^*$ is surjective, this proves the
claim. It remains to show that $x = \lra{2} - 1$. For this it is enough to prove that the composite $\ul{I}^2 \to F_2
\ul{GW}^\times \to \ul{I}^2_{tor}$ is given by multiplication by $\lra{2} - 1$.
Indeed then the two-fold contraction $\ul{W} \to \ul{W}_{tor}$ will also be
given by multiplication by $\lra{2} - 1$, so in particular $x = f(1) = \lra{2} -
1$.

Fix some field $L$. The ideal $I^2(L)$ is generated by elements of the form
$(\lra{a} - 1)(\lra{b} - 1)$. It is thus enough to prove that
\begin{equation} \label{eqn:to-show-2}
   (-1)^{(\lra{a} - 1)(\lra{b} - 1)(\lra{t_1} - 1) \cdots (\lra{t_m} - 1)} = 1 +
  (\lra{2} - 1)(\lra{a} - 1)(\lra{b} - 1)(\lra{t_1} - 1) \cdots (\lra{t_m} - 1)
\end{equation}
for all $m$. We prove this in two steps. First we deal with $m=0$.
Let $A = k(\sqrt{a}), B = k(\sqrt{b})$. Then we compute
\begin{align*}
(-1)^{(\lra{a} - 1)(\lra{b}-1)} &= (-1)^{(\lra{a} + 1)(\lra{b} + 1)}
                  & (-1)^x = (-1)^{-x} \\
        &= (-1)^{tr(A)tr(B)}     & \text{($T_A$), ($T_B$), $1 = \lra{2}^2$} \\
        &= (tr(A) - 1)^{tr(B)}   & \text{Proposition \ref{prop:norm-not-so-basics} part (ii)}\\
        &= (\lra{2}(\lra{a} + 1) - 1)^{tr(B)}  & (T_A) \\
        &= (\lra{a} + 1)^{tr(B)} - tr(A)tr(B) + (-1)^{tr(B)}
                                 & \text{(*), (**), (***)} \\
        &= 2 + \lra{a}tr(B) - tr(A)tr(B) + tr(B) - 1 &
                   \text{(**), (***), Proposition \ref{prop:norm-not-so-basics} part (ii)} \\
        &= 1 + tr(B)(1 + \lra{a})(1 - \lra{2}) & (T_A) \\
        &= 1 + (\lra{2} - 1)(\lra{a} - 1)(\lra{b} - 1)
                    & (T_B), \text{(****)} \\
        &=: 1 + \xi.
\end{align*}
Here the right hand column justifies each manipulation, with the following
abbreviations: ($T_A$) means we use that $tr(A) = \lra{2}(\lra{a} + 1)$, ($T_B$)
means we use that $tr(B) = \lra{2}(\lra{b} + 1)$, (*) means we use that $(xy)^z = x^z y^z$,
(**) means we use that \text{$\lra{x}^{tr(B)} = 1$ as follows from \cite[Lemma
2.6(ii)]{wittkop2006multiplikative}}, (***) means we apply Lemma
\ref{lemm:wittkop-formula}, in the form that $(x + y)^{tr(B)} = x^{tr(B)} +
y^{tr(B)} + tr(B)xy$, and (****) means we use that $x(\lra{2} - 1) = -x(\lra{2} -
1)$, which follows from Lemma \ref{lemm:2-vanishing}. This proves the claim when
$m=0$.

In order to prove the claim for $m>0$, it is enough to show that
for any field extension $L'/L$, and any $y \in GW(L')$
we have $(1 + \xi)^y = 1 +
y\xi$. Suppose that $(1 + \xi)^{y_i} = 1 + y_i\xi$, for $y_1,
y_2 \in GW(L')$. Then $(1+\xi)^{y_1 + y_2} = 1 + (y_1 + y_2)\xi + y_1y_2 \xi^2 =
1 + (y_1 + y_2)\xi$, provided that in addition $\xi^2 = 0$. Note that this
condition is equivalent to $(1 + \xi)^2 = 1 + 2\xi$.
Since $GW(L')$ is generated as an abelian group by $1$ and the traces
of quadratic extensions,
it thus suffices to show that $(1 +
\xi)^{tr(T)} = 1 + tr(T)\xi$ for $T/L'$ degree 2, or equivalently that
$\xi^{tr(T)} = 0$.
We have $(\lra{a} - 1)^{tr(T)} = 1 - tr(T)\lra{a} + tr(T) - 1 =
-tr(T)(\lra{a} - 1)$, by (***) with $T$ in place of $B$, and similarly for $b$
or $2$ in place of $a$.
Hence \[ \xi^{tr(T)} = (\lra{2} - 1)^{tr(T)} (\lra{a} -
1)^{tr(T)} (\lra{b} - 1)^{tr(T)} = -tr(T)^3\xi = -4tr(T)\xi = 0, \] by Lemmas
\ref{lemm:2-vanishing} and \ref{lemm:technical-1}(ii).

This concludes the proof.
\end{proof}

\begin{thm} \label{thm:final}
Let $k$ be a perfect field of characteristic different from $2$. There exists a
short exact sequence of homotopy modules
\[ 0 \to \ul{K}^W_{*+2}/[-1]\ul{K}^W_{*+1} \xrightarrow{\eta^2 \oplus (\lra{2} - 1)}
   \ul{K}_*^{MW}/2 \oplus \ul{K}^W_{tor, *+2} \to T_* \to 0, \]
defining $T_*$. There is a canonical isomorphism $T_0 \iso \ul{GW}^\times$. Via
this isomorphism, the
$\ul{GW}$-module structure as well as the cohomological transfers on $T_0$
coincide with the module structure from Section \ref{sec:sheaf-GW-times} and
Rost's multiplicative transfers, respectively.
\end{thm}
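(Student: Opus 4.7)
The plan is to lift the exact sequence from Proposition \ref{prop:GW-times-presentation} from a statement about abelian groups in degree zero to a sequence of homotopy modules, using the identifications already established in the previous sections.

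First I would unpack the terms of the proposed sequence. Using Morel's identifications $\ul{K}^W_n \iso \ul{I}^n$ for $n \ge 1$ and their analogues in non-positive degrees, the sequence in degree zero reduces to
\[ 0 \to \ul{I}^2/2\ul{I} \to \ul{GW}/2 \oplus \ul{I}^2_{tor} \to T_0 \to 0, \]
where the identification $[-1]\ul{K}^W_1 = 2\ul{I}$ uses that $\lra{-1} = -1$ in the Witt ring. The component $\eta^2$ of the differential, viewed in degree zero, becomes the inclusion $\ul{I}^2 \hookrightarrow \ul{GW}$ followed by reduction mod $2$, while the second component is literal multiplication by $\lra{2}-1$. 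This lands in the torsion subsheaf by Lemma \ref{lemm:2-vanishing}. I would verify that both components define morphisms of homotopy modules: for the first this is formal, and for the second this follows because $\lra{2}-1 \in \ul{GW} = \ul{K}^{MW}_0$ and every homotopy module is canonically a $\ul{GW}$-module.

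Next I would prove exactness. All terms are strictly homotopy invariant, hence unramified, so it suffices to check exactness on sections over fields $L/k$ of finite transcendence degree; and the assertion in arbitrary degree $*$ reduces via the bonding isomorphisms to the case $* = 0$ after contracting appropriately. In degree zero, the logarithm isomorphism $\log: F_2 GW^\times(L) \xrightarrow{\iso} I^2_{tor}(L)$ of Theorem \ref{thm:log-module-transfers}, combined with Lemma \ref{lemm:identify-1-exp} (which says $\log((-1)^y) = (\lra{2}-1)y$), converts the proposed sequence in degree zero into exactly the sequence of Proposition \ref{prop:GW-times-presentation}. Hence exactness in the middle and surjectivity on the right are already known, and injectivity on the left is immediate since the first component $\ul{I}^2/2\ul{I} \to \ul{GW}/2$ is just induced by the inclusion. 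I would then define $T_*$ as the cokernel, which gives both the exact sequence and the identification $T_0 \iso \ul{GW}^\times$ by construction.

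Finally I would verify that the $\ul{GW}$-module structure and cohomological transfers on $T_0$ coming from its homotopy module structure agree with those from Section \ref{sec:GW-module-structure} and Rost's multiplicative transfers on $\ul{GW}^\times$. The quotient map $\ul{GW}/2 \oplus \ul{I}^2_{tor} \to T_0$ is a morphism of homotopy modules, so of $\ul{GW}$-modules, and commutes with cohomological transfers. On the other hand, the analogous quotient map $\ul{GW}/2 \oplus F_2 \ul{GW}^\times \to \ul{GW}^\times$ is a morphism of $\ul{GW}$-modules for the Section \ref{sec:GW-module-structure} structure and commutes with Rost's norms: for the $(-1)^{(-)}$ component this uses the projection formula from Proposition \ref{prop:GW-module-existence-basics}(iv), while for the $F_2 \ul{GW}^\times$ component this is precisely the content of Theorem \ref{thm:log-module-transfers}, which in addition tells us that the log isomorphism intertwines both sets of structures. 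Since $\ul{GW}^\times$ is generated as a $\ul{GW}$-module by $-1$ and $F_2 \ul{GW}^\times$ (Proposition \ref{prop:GW-times-presentation}), agreement on these generators forces agreement everywhere. The main obstacle I expect is purely one of bookkeeping, making sure that each claimed coincidence of structures is traced back to the appropriate combination of Theorem \ref{thm:log-module-transfers}, Lemma \ref{lemm:identify-1-exp}, and the projection formula; no new computation beyond those results should be needed.
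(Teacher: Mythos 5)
Your overall architecture matches the paper: define $T_*$ as the cokernel, relate the degree-$0$ picture to Proposition \ref{prop:GW-times-presentation} via $\log$ and Lemma \ref{lemm:identify-1-exp}, and transport the module structure and transfers through this identification. The last step (agreement of module structures and transfers) is handled essentially the way the paper does it, by splitting into the $(-1)^{(-)}$ component and the $F_2\ul{GW}^\times$ component. However, there is a genuine gap in your exactness argument.

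The problematic step is the claim that ``the assertion in arbitrary degree $*$ reduces via the bonding isomorphisms to the case $* = 0$ after contracting appropriately.'' Contraction of homotopy modules is exact, so exactness in degree $n+1$ gives exactness in degree $n$; this propagates \emph{downward}, not upward. Knowing exactness (in particular injectivity of the left-hand map) in degree $0$ does not imply it in degree $1$ and above. Concretely, a morphism of homotopy modules can be injective in degree $0$ but not in higher degrees (e.g.\ multiplication by $2$ on $\ul{K}^M_*$, which is injective on $\ul{K}^M_0 = \ZZ$ but not on $\ul{K}^M_1 = \Gm$ since $-1$ is $2$-torsion). The paper therefore proves injectivity of $\ul{K}^W_{n+2}/[-1]\ul{K}^W_{n+1} \to \ul{K}^{MW}_n/2$ for \emph{all} $n \ge 0$, which does give exactness everywhere by the downward-propagation remark.

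A related issue: you assert that injectivity on the left is ``immediate since the first component $\ul{I}^2/2\ul{I} \to \ul{GW}/2$ is just induced by the inclusion.'' The fact that a map is induced by an inclusion does not make the map on quotients injective; what is needed is $I^{n+2} \cap 2K^{MW}_n \subseteq 2I^{n+1}$. In degree $n=0$ this does happen to be easy (a dimension count shows $I^2 \cap 2GW = 2I$), which may be why it looked immediate, but for $n \ge 1$ it is a non-trivial theorem of Arason--Elman (the paper cites \cite[Theorem 2.2]{ARASON2001150}, using that $-2 = \langle -1, -1 \rangle \in W(L)$ is a Pfister form). Since your reduction to degree $0$ does not work, you cannot avoid handling $n \ge 1$, and so this input is genuinely needed. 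To fix the proof: drop the reduction to degree $0$, prove injectivity of $\eta^2: \ul{K}^W_{n+2}/[-1]\ul{K}^W_{n+1} \to \ul{K}^{MW}_n/2$ for all $n \ge 0$ using Arason's theorem, and keep the rest of your degree-$0$ identification for the statement $T_0 \iso \ul{GW}^\times$.
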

Here by $[-1] \in K_1^W(k) \iso I(k)$ we denote the element $\lra{-1} - 1$, and
$\eta^2$ is just the natural inclusion $\ul{K}_{*+2}^W = \ul{I}^{*+2}
\hookrightarrow \ul{K}_*^{MW} = \ul{I}^* \times_{\ul{k}_*^M} \ul{K}_*^M$. One
may note that $\lra{-1} - 1 \in I(k) \subset W(k)$ is the same as $-2 \in W(k)$.
\begin{proof}
Note that $\eta^2([-1]) = \eta(-2)
= -2\eta = 0 \in \ul{K}_*^{MW}/2$, so the first map is well-defined, and that
$(\lra{2} - 1)[-1] = (\lra{2}-1)(\lra{-1} - 1) = (\lra{2}-1)(-2) = 0 \in
I(K) \subset W(K)$ by Lemma \ref{lemm:2-vanishing}, so the second map is
well-defined.

Being a sequence of homotopy modules, exactness in all degrees is equivalent to exactness in
all sufficiently high degrees.
Let us first show that $\ul{K}^W_{n+2}/[-1] \xrightarrow{\eta^2 \oplus (\lra{2} - 1)}
\ul{K}^{MW}_n/2 \oplus \ul{K}^W_{tor, n+2}$ is injective, for $n \ge 0$. For this let $x \in
K^W_{n+2}(L)$
and suppose that $\eta^2(x) = 0$. Equivalently, we have $x \in
I^{n+2}(L)$ with $\eta^2(x) \in 2K_n^{MW}(L)$. Then in particular $x \in 2I^n(L) \subset
2W(L)$. By \cite[Theorem 2.2]{ARASON2001150} we conclude that $x \in
2I^{n+1}(L)$. Note that this reference indeed applies, since $-2 = \langle -1, -1 \rangle \in W(L)$ is a Pfister
form. Consequently $x \equiv 0 \in K^W_{n+2}(L)/[-1]K^W_{n+1}(L)$, i.e. $\eta^2$
alone is already injective.

I claim that we have, for every field $L/k$, a commutative diagram as follows
\begin{equation} \label{eq:helper-diagram}
\begin{CD}
K^W_{2}(L)/[-1]K^W_{1}(L) @>{\eta^2 \oplus (\lra{2} - 1)}>>
       K_0^{MW}(L)/2 \oplus K^W_{tor, 2}(L)  @>>> T_0(L)    \\
@AaAA                @Ab\oplus \log AA                                \\
I^2(L)                    @>{p \oplus q}>> GW(L)/2 \oplus F_2 GW^\times(L) @>{r/s}>> GW^\times(L).
\end{CD}
\end{equation}
Here we have used the notation of Proposition \ref{prop:GW-times-presentation}.
The map $a$ is induced from the isomorphism $I^2(L) \iso K^W_{2}(L)$, and the map
$b$ is $GW(L)/2 \iso K_0^{MW}/2$. The claim is an immediate consequence of Lemma
\ref{lemm:identify-1-exp}.

Consider the map
$r'/s': \ul{K}^{MW}_0/2 \oplus \ul{K}^W_{tor, 2} \iso \ul{GW}/2 \oplus F_2
\ul{GW}^\times \to \ul{GW}^\times$. Here $F_2 \ul{GW}^\times \iso
\ul{I}^2_{tor} \iso \ul{K}^W_{tor, 2}$ is the composite of the logarithm
isomorphism and Morel's isomorphism $\ul{I}^* \iso \ul{K}^W_*$ restricted to
torsion, $r': \ul{GW}/2 \to
\ul{GW}^\times$ is $x \mapsto (-1)^x$ and $s': F_2 \ul{GW}^\times \to
\ul{GW}^\times$ is the inclusion. I claim that $r'/s'$ factors through
$\ul{K}^{MW}_0/2 \oplus \ul{K}^W_{tor, 2} \to T_0$, and induces an isomorphism
$T_0 \iso \ul{GW}^\times$. Since we are dealing with unramified sheaves, both
claims may be checked on sections over fields. There they follow from the
commutativity of diagram
\eqref{eq:helper-diagram}, using that the map $a$ is surjective, $b \oplus
\log$ is an isomorphism, and $r = r'(L), s = s'(L)$ under this isomorphism.

Now in order to see that $r'/s'$ preserves the $\ul{GW}$-module structure and the
transfers, it suffices to consider $r'$ and $s'$ separately. The fact that this
works for $r'$ follows from Proposition \ref{prop:GW-module-existence-basics}
parts (ii) and (iv). The map $s'$ is the composite of the logarithm isomorphism,
which preserves the module structure and transfers by Theorem
\ref{thm:log-module-transfers}, and the morphism of multiplication by a constant
(namely $\lra{2} - 1$) which also preserves the module structure and transfers.
This concludes the proof.
\end{proof}

\begin{rmk}
If $\sqrt{2} \in k$ (i.e. $\lra{2} = 1 \in W(k)$)
then we get a splitting $T_* \iso \ul{K}^{MW}_*/(2,\eta^2) \oplus
\ul{K}^W_{tor,*+2}$, and in particular $GW^\times(k) \iso GW(k)/(2, I^2) \oplus
I^2_{tor}(k)$, but not in general. This is essentially the same as
Remark \ref{rmk:sqrt-2}.
\end{rmk}

\appendix

\section{Recollections on homotopy modules}
\label{sec:app-htpy-mod}
Throughout, $k$ is a perfect base field. We recall some well-known facts about
homotopy modules which seem hard to find explicitly in the literature. We make
no claim to originality. Throughout $F_*$ will be an arbitrary homotopy module,
with no necessary relation to the specific homotopy module constructed in
Section \ref{sec:sheaf-GW-times}.

\paragraph{The basics.} 
Recall that for any (pre)sheaf $F$ (on $Sm(k)$), its \emph{contraction} is
\[ F_{-1} := \iHom(\ZZ[\Gm], F) \in Pre(Sm(k)). \]
Here $\ZZ[\Gm] = \ZZ[(\Aone \setminus 0, 1)]$, where for a pointed scheme $(X,
x)$ we put $\ZZ[(X, x)] = \ZZ[X]/\ZZ[\{x\}]$.
Recall moreover that $F$ is called strictly homotopy invariant if for all $X \in
Sm(k)$ and $n \ge 0$, the canonical map $H^n_{Nis}(X, F) \to H^n_{Nis}(X \times
\Aone, F)$ is an isomorphism.

A \emph{homotopy module} \cite[Section
5.2]{morel-trieste} consists of a collection of strictly homotopy invariant
sheaves $F_* \in Shv_{Nis}(Sm(k)), * \in \ZZ$ together with isomorphisms $F_n
\to (F_{n+1})_{-1}$. A morphism of homotopy modules $\alpha_*: F_* \to G_*$ consists of
morphisms of sheaves $\alpha_n: F_n \to G_n$ for all $n$ such that
the following diagram commutes for each $n$
\begin{equation*}
\begin{CD}
F_n @>>> (F_{n+1})_{-1} \\
@V{\alpha_n}VV  @VV{(\alpha_{n+1})_{-1}}V \\
G_n @>>> (G_{n+1})_{-1}.
\end{CD}
\end{equation*}

The category of homotopy modules is equivalent to the heart of the homotopy
$t$-structure on $\SH(k)$ \cite[Theorem 5.2.6]{morel-trieste}. This implies that
they have a lot more structure than is immediately apparent. In this appendix we
clarify some of this structure.

\paragraph{$\ul{K}_*^{MW}$-module structure.} The object $\ul{\pi}_0(\tunit)_*
\iso \ul{K}_*^{MW}$ is the unit of a symmetric monoidal structure on the
category of homotopy modules. For the definition of the sheaf of unramified
Milnor-Witt $K$-theory $\ul{K}_*^{MW}$, see \cite[Chapter 3]{A1-alg-top}. Its
sections over a field $L$ are generated by the classes $[u] \in K_1^{MW}(L)$ for
$u \in L^\times$ and $\eta \in K_{-1}^{MW}(L)$. One puts $\lra{u} = 1 +
\eta[u]$; this induces an isomorphism $K_0^{MW}(L) \iso GW(L)$ \cite[Lemma
3.10]{A1-alg-top}. For each $n \ge 1$, the induced map $\ZZ[\Gm]^{\otimes n} \to
\ul{K}_n^{MW}, u_1 \otimes \dots \otimes u_n \mapsto [u_1] \cdots [u_n]$ induces
a surjection on sections over fields \cite[Lemma 3.6]{A1-alg-top}.

Now suppose that $F_*$ is a homotopy module. The isomorphism $F_n \to
(F_{n+1})_{-1}$ corresponds by adjunction to a morphism $\ZZ[\Gm] \otimes F_n \to
F_{n+1}$. Using the identification of the category of homotopy modules with the
heart of $\SH(k)$, one may show that
it factors through the surjection $\ZZ[\Gm] \to \ul{K}_1^{MW}$. By
contraction, the pairing $\ul{K}_1^{MW} \otimes F_n \to F_{n+1}$ induces
$(\ul{K}_1^{MW})_{-2} \otimes F_n \to (F_{n+1})_{-2}$ and hence a multiplication
$\eta: F_n \to F_{n-1}$. Since $\ul{K}_*^{MW}$ is generated by $\ul{K}_1^{MW}$
and $\eta \in \ul{K}_{-1}^{MW}$, it follows that there is at most one extension
to a pairing $\ul{K}_n^{MW} \otimes F_m \to F_{n+m}$ for all $m,n\in \ZZ$.
It is a consequence of the
identification of the category of homotopy modules with the heart of $\SH(k)$
that this extension always exists.

\paragraph{Cohomological transfers.} A homotopy module automatically has
cohomological transfers, i.e. for any finite étale morphism $f: X \to Y$ with
$X, Y$ essentially smooth over $k$, there is a transfer $tr_f: F_*(X) \to
F_*(Y)$. See \cite[Corollary 5.30]{A1-alg-top} or \cite[Section
4]{bachmann-real-etale}. Cohomological transfers are functorial in morphisms of
homotopy modules and satisfy the projection and base change formulas (\emph{loc.
cit.}). This means the following. If $a \in \ul{K}_*^{MW}(X)$ and $m \in
F_*(Y)$, then $tr_f(a f^* m) = tr_f(a) \cdot m$, and similarly if $b \in
\ul{K}_*^{MW}(Y)$ and $n \in F_*(X)$ then $tr_f(f^*(b) n) = b tr_f(n)$; these
are the projection formulas. Moreover if we have a cartesian square
\begin{equation*}
\begin{CD}
X' @>g'>> X \\
@Vf'VV   @VfVV \\
Y' @>g>>  Y
\end{CD}
\end{equation*}
with $Y, Y'$ smooth and $f$ étale, then $g^* tr_f(n) = tr_{f'} g'^*(n)$; this is
the base change formula.

\paragraph{More on contractions.}
\begin{lemm} \label{lemm:app-contractions}
Let $t$ be a coordinate on $\Aone$ and $F_*$ a homotopy module. Write $i_1:
Spec(k) \to \Aone \setminus 0$ for the inclusion of the point $1$.
Then for $X
\in Sm(k)$ we have $F_*(X \times (\Aone \setminus 0)) \iso F_*(X) \oplus
F_{*-1}(X)$. Here the map $F_{*-1}(X) \to F_*(X \times (\Aone \setminus 0))$
is multiplication by $[t] \in
K_1^{MW}(k[t, t^{-1}])$, the map $F_*(X) \to F_*(X \times (\Aone \setminus 0))$ is pullback along the
canonical projection, and the subgroup $F_{*-1}(X) \subset F_*(X \times (\Aone \setminus
0))$ is precisely the kernel of $i_1^*$.
\end{lemm}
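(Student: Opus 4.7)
The plan is to deduce everything from the canonical splitting $\ZZ[\Aone\setminus 0] \iso \ZZ \oplus \ZZ[\Gm]$ of presheaves of abelian groups, induced by evaluation at the basepoint $1$. This splitting is functorial in cartesian products, so base-changing along $X \to \Spec(k)$ gives
\[ \ZZ[X \times (\Aone \setminus 0)] \iso \ZZ[X] \oplus (\ZZ[X] \otimes \ZZ[\Gm]), \]
where the second summand is characterised as the kernel of the map induced by $i_1: X \to X \times (\Aone \setminus 0)$. Applying $\Hom(-, F_n)$ and using the adjunction defining the contraction (namely $F_{n-1} \iso (F_n)_{-1} = \iHom(\ZZ[\Gm], F_n)$) yields a direct sum decomposition
\[ F_n(X \times (\Aone \setminus 0)) \iso F_n(X) \oplus F_{n-1}(X), \]
in which the second summand is exactly $\ker(i_1^*)$ by construction. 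This already gives the set-theoretic statement and the identification of $\ker(i_1^*)$.

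The remaining task is to identify the map $F_{n-1}(X) \to F_n(X \times (\Aone \setminus 0))$ obtained from the splitting with ``multiplication by $[t]$''. By unwinding the adjunction, the map is (up to sign conventions) obtained as follows: the identity of $F_{n-1} = (F_n)_{-1}$ corresponds to a morphism $\ZZ[\Gm] \otimes F_{n-1} \to F_n$, which by construction of the $\ul{K}_*^{MW}$-module structure on $F_*$ factors through $\ul{K}_1^{MW} \otimes F_{n-1} \to F_n$, the pairing defining the module structure. Evaluated on $X \times (\Aone \setminus 0)$, the tautological section of $\ZZ[\Gm]$ (the identity map of $\Gm$ pulled back) maps to $[t] \in K_1^{MW}(k[t,t^{-1}])$ under $\ZZ[\Gm] \to \ul{K}_1^{MW}$, so the induced map on sections is precisely $x \mapsto [t] \cdot p^*(x)$ where $p$ is the projection.

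The ``hard part'' is really only bookkeeping: one must verify that the abstract adjunction defining the contraction, the concrete formula for the module structure map in terms of $[t]$, and the splitting of $\ZZ[\Aone\setminus 0]$ are all compatible with the evaluation at $1$ giving the claimed kernel description. None of this requires any nontrivial input beyond the definitions, the fact that $F_n$ is a (pre)sheaf on $Sm(k)$, and the fact that the structure map $\ZZ[\Gm]\otimes F_{n-1} \to F_n$ corresponds to the identity of $(F_n)_{-1}$ under adjunction. Strict homotopy invariance is not actually needed for this particular lemma; it is inherited from $F_*$ being a homotopy module but plays no role in the argument, since everything follows formally from the splitting of the representing object.
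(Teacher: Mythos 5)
Your proof is correct and follows essentially the same route as the paper's: split off $F_*(X)$ using the retraction given by $i_1$, identify the complementary summand $\ker(i_1^*)$ with $(F_*)_{-1}(X)\iso F_{*-1}(X)$ directly from the definition of the contraction as $\iHom(\ZZ[\Gm],-)$, and then unwind the adjunction/Yoneda pairing to see that the resulting inclusion is multiplication by $[t]$. Your observation that strict homotopy invariance is not actually needed (the splitting comes formally from $\ZZ[\Aone\setminus 0]\iso\ZZ\oplus\ZZ[\Gm]$, whereas the paper nominally invokes homotopy invariance) is accurate but does not change the substance of the argument.
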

\begin{proof}
We have the inclusions $Spec(k) \xrightarrow{i_1} \Aone \setminus 0 \to \Aone$.
Since $F$ is homotopy invariant, it follows that $F_*(X \times (\Aone \setminus
0)) = F_*(X) \oplus M_*(X)$, where $M_*(X)$ is the kernel of $i_1^*$ and by
definition of contraction, $M_*(X) = (F_*)_{-1}(X)$. We thus use the defining
isomorphism of a homotopy module to identify $M_*(X) \iso F_{*-1}(X)$. It
remains to see that this isomorphism is given by multiplication by $[t]$.

For this, let $F$ be any homotopy invariant sheaf and $U \in Sm(k)$ be arbitrary.
If $X \in Sm(k)$ then $\iHom(\ZZ[X], F)(U) = F(X \times U)$ and the pairing
$\Hom(U, X) \times \iHom(\ZZ[X], F)(U) \to F(U)$ is given by $\alpha, x \mapsto
\alpha'^*(x)$, where $\alpha' = (\alpha, \id_U): U \to X \times U$ and $x \in
F(U \times X)$. Since $F_{-1}(U) = \iHom(\ZZ[\Gm], F)(U) \subset F((\Aone
\setminus 0) \times U)$,
the pairing $\Gm(U) \otimes
F_{-1}(U) \to F(U)$ is given by $(u, s) \mapsto u'^*s$. Here $s \in F_{-1}(U)
\subset F(U \times (\Aone \setminus 0)), u \in \Hom_k(U, \Aone \setminus 0)$
and $u' = (u, \id_U) \in \Hom_U(U, (\Aone \setminus 0) \times U)$.
Consequently ``multiplication by
$[t]$''
\[ F(X \times (\Aone \setminus 0))
     \supset F_{-1}(X) \to F_{-1}(X \times (\Aone \setminus 0))
      \xrightarrow{[t]} F(X \times (\Aone \setminus 0)) \]
is given by $F(X \times (\Aone \setminus 0)) \supset F_{-1}(X) \ni s \mapsto
t^* p^* (s)$, where $p: X \times (\Aone \setminus 0)^2 \to X \times (\Aone
\setminus 0)$ is projection to the first two factors and $t: X \times (\Aone
\setminus 0) \to X \times (\Aone \setminus 0)^2$ is $(x, u) \mapsto (x, u, u)$.
Thus $pt = \id$ and so multiplication by $[t]$
corresponds to the canonical inclusion, as was to be shown.
\end{proof}

\paragraph{Boundary maps.} Being strictly homotopy invariant, each of the
sheaves $F_n$ is unramified \cite[Lemma 6.4.4]{morel2005stable}. This means that
for a dense open immersion $U \to X \in Sm(k)$, the restriction
$F_*(X) \to F_*(U)$ is injective and that moreover for $X$ connected we have
\[ F_*(X) = \bigcap_{x \in X^{(1)}}F_*(X_x). \]
Here $X^{(1)}$ denotes the set of points of codimension one,
$F_*(X_x)$ denotes the stalk at $x$, and the intersection takes place in
$F_*(k(X))$.

If $L/k$ is a field extension, then by a standard colimit
procedure there is a well-defined group of sections $F_*(L)$. More generally
this is true if $L$ is a scheme which is a filtering inverse limit of a system
of smooth schemes with affine transition morphisms. If $L$ is a finitely
generated field extension of $k$ and $\mathcal{O} \subset L$ is a geometric dvr,
then by definition there exist $X \in Sm(k)$ and $x \in X^{(1)}$ such that $L
\iso k(X)$ and $Spec(\mathcal{O})$ is the localization of $X$ in $x$.
Let $\kappa$ be the residue field
of $\mathcal{O}$. Then for every choice of uniformizer $\pi$ of $\mathcal{O}$
there exists a canonical \emph{boundary map}
\[ \partial^\pi: F_*(L) \to F_{*-1}(\kappa) \]
with kernel $F_*(\mathcal{O})$ \cite[discussion after Corollary
2.35]{A1-alg-top}.

In this situation, we write $s: Spec(\kappa) \to Spec(\mathcal{O})$ for the
inclusion of the closed point.

\begin{lemm} \label{lemm:app-bdry-formula}
For $u \in \mathcal{O}^\times$ and $m \in F_*(\mathcal{O})$ we have
$\partial^\pi([u\pi]m) = \lra{s^*(u)}s^*(m)$.
\end{lemm}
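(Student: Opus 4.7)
The plan is to expand $[u\pi]$ via the Milnor--Witt relation $[u\pi] = [u] + [\pi] + \eta[u][\pi]$ in $K^{MW}_1(L)$ and apply $\partial^\pi$ to $[u\pi]\cdot m$ termwise. Since $u \in \mathcal{O}^\times$, both $[u]m$ and $\eta[u]m$ lift to elements of $F_*(\mathcal{O})$, hence lie in the kernel of $\partial^\pi$. Rewriting the remaining cross term as $[\pi]\cdot(\eta[u]m)$, the whole computation reduces to the residue normalization
\begin{equation*}
   \partial^\pi([\pi]\cdot b) = s^*(b) \qquad \text{for every } b \in F_*(\mathcal{O}), \qquad (\ast)
\end{equation*}
applied once with $b = m$ and once with $b = \eta[u]m$. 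Granting $(\ast)$ and the fact that $s^*$ is a morphism of $\ul{K}^{MW}_*$-modules (so that $s^*([u]m) = [s^*(u)]s^*(m)$), one obtains
\begin{equation*}
   \partial^\pi([u\pi]\cdot m) = s^*(m) + \eta[s^*(u)]s^*(m) = (1 + \eta[s^*(u)])s^*(m) = \lra{s^*(u)}s^*(m),
\end{equation*}
using the identity $\lra{v} = 1 + \eta[v]$ at the last step.

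The main obstacle is therefore $(\ast)$, which is essentially the universal normalization of the residue map on a homotopy module. My approach is the projection-formula strategy: the boundary $\partial^\pi$ is $\ul{K}^{MW}_*$-linear relative to the inclusion $F_*(\mathcal{O}) \hookrightarrow F_*(L)$, in the sense that $\partial^\pi(a \cdot b) = \partial^\pi(a) \cdot s^*(b)$ whenever $a \in K^{MW}_*(L)$ and $b \in F_*(\mathcal{O})$. Combined with the scalar base case $\partial^\pi([\pi]) = 1 \in K^{MW}_0(\kappa)$ for $\ul{K}^{MW}_*$ itself, this immediately yields $(\ast)$.

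The Leibniz-type linearity of $\partial^\pi$ is built into Morel's construction of the Rost--Schmid complex; alternatively, I would verify it directly by the standard continuity reduction of Corollary \ref{corr:app-cont}, which allows us to replace $\mathcal{O}$ by the split model $\mathcal{O}_{\Aone_\kappa, 0}$ with $\pi = t$ the standard coordinate. In this split case, Lemma \ref{lemm:app-contractions} exhibits the canonical decomposition $F_*(\kappa \times (\Aone \setminus 0)) = F_*(\kappa) \oplus [t]F_{*-1}(\kappa)$, and by construction $\partial^t$ restricts to this subgroup as projection onto the second summand, which makes both the projection formula and the base case transparent. Once the linearity is in hand, the computation of the first paragraph closes the proof with no further input beyond the Milnor--Witt relations already used in the body of the paper.
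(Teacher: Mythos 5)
Your computation is correct and the overall strategy is the same as the paper's: everything reduces to the ``projection formula'' $\partial^\pi(a\cdot b) = \partial^\pi(a)\,s^*(b)$ for $a \in K^{MW}_*(L)$, $b \in F_*(\mathcal{O})$, combined with Milnor--Witt algebra (the paper packages the expansion as $[u\pi] = [u] + \lra{u}[\pi]$ and computes $\partial^\pi([u\pi]) = \lra{s^*(u)}$ inside $\ul{K}^{MW}_*$, which is equivalent to your termwise expansion; your rewriting $\eta[u][\pi] = [\pi]\eta[u]$ is legitimate since $\eta\lra{-1} = -\eta$). Where you diverge is in how you propose to \emph{prove} the projection formula. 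Citing that it is built into Morel's construction is acceptable and is roughly what the paper does, except that the paper gives the actual mechanism: $\partial^\pi$ is the boundary map in the long exact sequence of cohomology with support on $Spec(\mathcal{O})$, and multiplication by $m \in F_*(\mathcal{O})$ is a map of sheaves $\ul{K}^{MW}_* \to F_*$ on $Spec(\mathcal{O})$, so naturality of the boundary map under this sheaf morphism gives $\partial^\pi(x m) = \partial^\pi(x)s^*(m)$ directly. Your proposed ``alternative'' verification, however, has a gap: Corollary \ref{corr:app-cont} is a continuity statement (sections over a limit scheme are the colimit over finite-type models, and $p^*\ul{GW} \iso \ul{GW}$ for $p$ essentially smooth); it provides no comparison between an arbitrary geometric dvr $\mathcal{O}$ and the split model $\mathcal{O}_{\Aone_\kappa,0}$, and in particular does not identify their boundary maps. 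The reduction to a split situation that actually exists in Morel's construction goes through Henselization and the purity/Thom isomorphism $H^1_x \iso F_{-1}(\kappa)$ (which is where the choice of $\pi$ enters), not through continuity; and even in the split model, the assertion that $\partial^t$ is ``projection onto the $[t]F_{*-1}$ summand'' of Lemma \ref{lemm:app-contractions} is a statement about the localization sequence for $X\times\Aone \supset X\times(\Aone\setminus 0)$ that itself requires proof rather than being true ``by construction'' of the contraction. So: keep the first paragraph and the citation, but replace the continuity argument by the cohomology-with-support naturality argument if you want a self-contained verification.
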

\begin{proof}
First note that for $x \in K_*^{MW}(K)$ and $m \in F_*(\mathcal{O})$ we have
$\partial^\pi(xm) = \partial^\pi(x)s^*(m)$. To see this, one goes
back to the geometric construction of $\partial^\pi$ as in \cite[Corollary
2.35]{A1-alg-top}. That is we observe that after canonical identifications,
$\partial^\pi$ corresponds to the boundary map $\partial: H^0(K, F_*) \to
H^1_v(\mathcal{O}, M_*)$ in the long exact sequence for cohomology with
support. Our claim then follows from the observation that for any sheaf of rings
$K$ on a space $X$ and $K$-module $F$,
the boundary map in cohomology with support satisfies
our claim. To see this, just note that multiplication by $m \in F(X)$ induces a
homomorphism of sheaves $K \to F \in Shv(X)$ and consider the induced
homomorphism of long exact sequences for cohomology with support.

We also have $\partial^\pi([u\pi]) = \partial^\pi(\lra{u}[\pi] + [u]) =
\lra{s^*(u)}$, using \cite[Lemma 3.5(1) and Proposition 3.17(3)]{A1-alg-top} and
the fact that the homomorphism $\partial^\pi$ has kernel
$\ul{K}_*^{MW}(\mathcal{O})$.

This concludes the proof.
\end{proof}

\begin{lemm} \label{lemm:app-submodule}
If $G_* \hookrightarrow F_*$ is an inclusion of homotopy modules, then for any
connected $X \in Sm(k)$ we have $G_*(X) = F_*(X) \cap G_*(k(X))$.
\end{lemm}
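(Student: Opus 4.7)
The plan is to use that homotopy modules, being strictly homotopy invariant sheaves, are unramified in the sense of Morel, combined with the functoriality of the boundary maps with respect to the inclusion $G_* \hookrightarrow F_*$.

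One inclusion is immediate: if $\alpha \in G_*(X)$, then viewing $G_*$ as a subsheaf of $F_*$ gives $\alpha \in F_*(X)$, and restricting to the generic point gives $\alpha \in G_*(k(X))$. So $G_*(X) \subset F_*(X) \cap G_*(k(X))$.

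For the reverse inclusion, suppose $\alpha \in F_*(X) \cap G_*(k(X))$. Since $G_*$ is unramified (being strictly homotopy invariant, by \cite[Lemma 6.4.4]{morel2005stable}), it suffices to show that $\alpha \in G_*(\mathcal{O}_{X,x})$ for every point $x \in X^{(1)}$ of codimension one. Fix such an $x$ and pick a uniformizer $\pi$ of the dvr $\mathcal{O} := \mathcal{O}_{X,x}$, with residue field $\kappa$. There are boundary maps $\partial^\pi_F : F_*(k(X)) \to F_{*-1}(\kappa)$ and $\partial^\pi_G : G_*(k(X)) \to G_{*-1}(\kappa)$ whose kernels are $F_*(\mathcal{O})$ and $G_*(\mathcal{O})$ respectively. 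Because the boundary map is constructed as the connecting homomorphism in cohomology with support (cf.\ the discussion before Lemma \ref{lemm:app-bdry-formula}), it is natural in morphisms of strictly homotopy invariant sheaves, so the square
\begin{equation*}
\begin{CD}
G_*(k(X)) @>{\partial^\pi_G}>> G_{*-1}(\kappa) \\
@VVV                           @VVV            \\
F_*(k(X)) @>{\partial^\pi_F}>> F_{*-1}(\kappa)
\end{CD}
\end{equation*}
commutes. The right-hand vertical arrow is injective because $G_{*-1} \hookrightarrow F_{*-1}$ is an injection of sheaves and hence is injective on sections over any (essentially smooth) scheme, in particular over $\operatorname{Spec}(\kappa)$.

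Now $\alpha \in F_*(X) \subset F_*(\mathcal{O})$ gives $\partial^\pi_F(\alpha) = 0$, hence $\partial^\pi_G(\alpha)$ maps to $0$ in $F_{*-1}(\kappa)$; by injectivity of the right vertical map, $\partial^\pi_G(\alpha) = 0$, so $\alpha \in G_*(\mathcal{O})$. Since $x$ was arbitrary, unramifiedness of $G_*$ yields $\alpha \in G_*(X)$, completing the proof. The main (minor) obstacle is simply citing or verifying the naturality of $\partial^\pi$ in morphisms of strictly homotopy invariant sheaves, but this is clear from its construction as a connecting map in a long exact sequence, as recalled just before Lemma \ref{lemm:app-bdry-formula}.
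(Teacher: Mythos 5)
Your proof is correct and takes essentially the same route as the paper's: both reduce via unramifiedness of $G_*$ to the case of a dvr $\mathcal{O} \subset L$ and then use that the boundary map of $G_*$ is the restriction of that of $F_*$, so that $G_*(\mathcal{O}) = \ker \partial^\pi_G = \ker \partial^\pi_F \cap G_*(L) = F_*(\mathcal{O}) \cap G_*(L)$. Your additional remarks on the naturality of the connecting homomorphism and the injectivity of $G_{*-1}(\kappa) \to F_{*-1}(\kappa)$ simply spell out what the paper compresses into the observation that the construction of $\partial^\pi$ is completely canonical.
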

\begin{proof}
Since $G_*$ is unramified we have $G_*(X) = \bigcap_{x \in X^{(1)}} G_*(X_x)$.
It thus suffices to prove the lemma in case that $X = Spec(\mathcal{O})$ with
$\mathcal{O} \subset L$ a dvr with uniformizer $\pi$.
Note that once a uniformizer $\pi$ has been fixed, the construction of the
boundary map $\partial^\pi$ of a homotopy module is completely canonical,
which implies that $\partial^\pi_G =
\partial^\pi_F|_G$ and hence $G_*(\mathcal{O}) = ker(\partial^\pi_G) =
ker(\partial^\pi_F) \cap G_*(L)$. This concludes the proof.
\end{proof}

\section{Recollections on continuity}
\label{sec:app-cont}
In this section we collect some continuity results which we use repeatedly, but
could not find any references for. We again make no claims to originality.

By an essentially smooth $S$-scheme $X$ we mean a cofiltered diagram of
$S$-schemes $X_\alpha, \alpha \in \Lambda$ with each $X_\alpha \to S$ smooth, and each
transition map $X_\alpha \to X_\beta$ affine. By abuse of notation, we denote
the limit $\lim_\alpha X_\alpha$ also by $X$. We call a morphism of schemes $X \to S$
essentially smooth if it can be obtained as the limit of a cofiltered
diagram as above.

By an essentially finite type $S$-scheme $X$ we mean the same thing, except that
$X_\alpha \to S$ is required to be of finite type instead of
smooth.

Note that it follows from \cite[Théorème 8.8.2(2) and Théorème 8.10.5(v)]{EGAIV}
\cite[Tags 0C0C and 01OY]{stacks-project} that essentially
smooth (respectively essentially finite type) morphisms between Noetherian
schemes are stable under composition.

\begin{lemm}\label{lemm:witt-continuity}
Suppose $X \to S$ is an essentially finite type morphism of Noetherian schemes.
Then
\[ GW(X) \iso \colim_\alpha GW(X_\alpha) \]
via the pullbacks $GW(X_\alpha) \to GW(X)$.
\end{lemm}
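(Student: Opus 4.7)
The plan is to reduce everything to the classical limit theorems for quasi-coherent modules in cofiltered systems of schemes with affine transition maps, as developed in \cite[\S 8]{EGAIV}. The key observation is that $GW(X) = K(Bil(X))^\oplus$ is by definition the Grothendieck group of the commutative monoid $Iso(Bil(X))$ under direct sum; what I really want to show is that the symmetric monoidal groupoid $Bil(X)$ is the $2$-colimit of the symmetric monoidal groupoids $Bil(X_\alpha)$ along pullback, after which passing to $K^\oplus$ yields the lemma.

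For surjectivity, given $(E, b) \in Bil(X)$, I would first descend the underlying locally free sheaf $E$ to a finitely presented module $E_\alpha$ on some $X_\alpha$ using \cite[Théorème 8.5.2]{EGAIV}, and then enlarge $\alpha$ so that $E_\alpha$ is locally free on all of $X_\alpha$ (this is an open property on the Noetherian scheme $X_\alpha$ which holds after restriction to $X$ and therefore descends through the limit). Next I would descend the form $b : E \otimes E \to \mathcal{O}_X$ to a morphism $b_\beta : E_\beta \otimes E_\beta \to \mathcal{O}_{X_\beta}$, enlarge $\beta$ so that $b_\beta \circ \tau = b_\beta$ (an equality of morphisms between coherent sheaves, which descends by the same theorem), and further enlarge $\beta$ so that $b_\beta^\vee : E_\beta \to E_\beta^\vee$ is an isomorphism (again an open condition). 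The resulting $(E_\beta, b_\beta)$ pulls back to $(E, b)$.

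For injectivity, I would use the explicit description of the group completion: two classes $[E_{1,\alpha}, b_{1,\alpha}]$ and $[E_{2,\alpha}, b_{2,\alpha}]$ agree in $GW(X)$ precisely when there exists some $(F, c) \in Bil(X)$ and an isomorphism $\phi : (E_{1,\alpha} \oplus F, b_{1,\alpha} \perp c)|_X \iso (E_{2,\alpha} \oplus F, b_{2,\alpha} \perp c)|_X$ in $Bil(X)$. Applying the surjectivity argument to $(F, c)$ yields a descent to some $X_\beta$; then $\phi$ becomes a morphism between finitely presented sheaves on $X$ which are pulled back from $X_\beta$, so by \cite[Théorème 8.5.2]{EGAIV} it itself descends to some $X_\gamma$. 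A final enlargement ensures that $\phi_\gamma$ respects the bilinear forms (an equation of morphisms of coherent sheaves) and is an isomorphism (open), giving equality already in $GW(X_\gamma)$. The main obstacle is purely organizational: each piece of data---the underlying module, its local freeness, the form, its symmetry, its non-degeneracy, and eventually the witnessing isomorphism---requires its own application of the EGA limit theorems and a corresponding replacement of the index, but no conceptually new input is required beyond this classical machinery.
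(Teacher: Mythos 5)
Your handling of the categorical core is sound and is in substance the same as the paper's: you establish that $Bil(X)$ is the filtered $2$-colimit of the categories $Bil(X_\alpha)$ by descending, one piece at a time, the underlying finitely presented module, its local freeness, the form, its symmetry, its non-degeneracy, and (for injectivity) the stabilizing object and the witnessing isometry, all via the limit theorems of \cite[\S 8]{EGAIV}. The paper compresses this into a citation of \cite[Tag 01ZR]{stacks-project} together with the open-locus arguments from \cite[Théorème 8.8.2(2) and Théorème 8.10.5]{EGAIV}, but the content is identical, and the conclusion $K(Bil(X))^\oplus \iso \colim_\alpha K(Bil(X_\alpha))^\oplus$ is correct.

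The gap is in your very first sentence: the identification $GW(X) = K(Bil(X))^\oplus$ is asserted in Section 2.1 of the paper only for $X$ \emph{affine}, whereas the lemma concerns arbitrary Noetherian schemes. For general $X$, the Grothendieck--Witt ring of \cite{knebusch-bilinear} in use here is $GW(X) = K(Bil(X))^\oplus/J(X)$, where $J(X)$ is the ideal generated by differences $V - W$ of metabolic bilinear bundles with isomorphic Lagrangians. In particular your injectivity criterion --- stable isometry in $Bil(X)$ --- does not characterize equality in $GW(X)$ once $J(X) \neq 0$, so as written you have proved continuity of $X \mapsto K(Bil(X))^\oplus$ rather than of $X \mapsto GW(X)$. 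The repair is short and is precisely the second half of the paper's proof: a metabolic bundle on $X$ together with a chosen Lagrangian $L$ (a summand with $V = L \oplus L^\perp$) is again finitely many pieces of coherent data plus open conditions, hence descends to some $X_\alpha$ by the same limit theorems, giving $J(X) = \colim_\alpha J(X_\alpha)$; the isomorphism then passes to the quotients.
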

\begin{proof}
By \cite[Tag 01ZR]{stacks-project} the category of coherent sheaves on $X$ is
the colimit of the categories of coherent sheaves on the $X_\alpha$. Any open
subscheme of $X$ is the base change of an open subscheme of $X_\alpha$ for
$\alpha$ sufficiently large \cite[Théorème 8.8.2(2) and Théorème 8.10.5(iii)]{EGAIV} and
hence it follows easily that the category of vector bundles (locally free finite
rank sheaves) on
$X$ is also the colimit of the categories of vector bundles on the $X_\alpha$.
The same result for the categories of bilinear bundles is now formal, and then
$K(Bil(X)) = \colim_i K(Bil(X_\alpha))$.

By definition we have $GW(X) = K(Bil(X))/J(X)$, where $J(X)$ is the ideal
consisting of elements $V - W$, where $V, W$ range over metabolic bilinear
bundles with isomorphic Lagrangians $L$ \cite[Section I.4]{knebusch-bilinear}.
Recall that $L \subset V$ being a Lagrangian means that $V = L \oplus L^\perp$.

It remains to show that $J(X) = \colim_i J(X_\alpha)$. This is immediate from the
description of the category $Bil(X)$ as the colimit of the categories
$Bil(X_\alpha)$.
\end{proof}

\begin{lemm}\label{lemm:Nis-cont}
Let $s: X \to S$ be an essentially smooth morphism between Noetherian schemes of finite
dimension, and $F \in Pre(Sm(S))$ a presheaf.
For each $\alpha$ let $s_\alpha: X_\alpha \to S$ be the structure map. Then
\[ (s^* F)(X) = \colim_\alpha F(X_\alpha). \]
Moreover, $s^*$ preserves Nisnevich sheaves.
\end{lemm}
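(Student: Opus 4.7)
The plan is to define $s^*F$ explicitly on $Sm(X)$ by a colimit formula, immediately obtain the stated identity $(s^*F)(X) = \colim_\alpha F(X_\alpha)$ as the special case $Y = X$, and then establish the sheaf-preservation claim by combining descent of Nisnevich distinguished squares along essentially smooth morphisms with the fact that filtered colimits of sets commute with finite limits.

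For the first step, I would observe that for any $Y \in Sm(X)$, by the limit results cited at the top of the appendix (EGA IV, Théorèmes 8.8.2 and 8.10.5), there exist an index $\alpha_0$ and $Y_{\alpha_0} \in Sm(X_{\alpha_0})$ with $Y \iso Y_{\alpha_0} \times_{X_{\alpha_0}} X$. Setting $Y_\alpha := Y_{\alpha_0} \times_{X_{\alpha_0}} X_\alpha$ for $\alpha \geq \alpha_0$ exhibits $Y$ as the cofiltered limit of the smooth $S$-schemes $Y_\alpha$ (a composite of smooth morphisms being smooth), and one defines
\[ (s^*F)(Y) := \colim_{\alpha \geq \alpha_0} F(Y_\alpha). \]
A cofinality argument shows this is independent of the descent datum and functorial in $Y$, using descent of morphisms between essentially finitely presented $X$-schemes to some finite stage. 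Taking $Y = X$ and $Y_{\alpha_0} := X_{\alpha_0}$ immediately yields the stated identity.

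For the sheaf-preservation claim, I would use the well-known fact that a presheaf on $Sm(X)$ is a Nisnevich sheaf if and only if it sends finite disjoint unions to products and Nisnevich distinguished squares (in the sense of Morel--Voevodsky) to cartesian squares of sets. Every Nisnevich distinguished square over $Y$ descends to a Nisnevich distinguished square over $Y_{\alpha_0}$ for $\alpha_0$ sufficiently large, since étaleness, open immersions, and the complementary closed subscheme structure all descend along affine cofiltered limits by EGA IV 8. Consequently the square of sets one must show cartesian for $s^*F$ over $Y$ is the filtered colimit, indexed by $\alpha \geq \alpha_0$, of cartesian squares for $F$ over each $Y_\alpha$, each of which is cartesian by the sheaf property of $F$; since filtered colimits of sets preserve finite limits, the result follows. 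Disjoint unions are handled identically.

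The main obstacle, insofar as there is one, lies in the first step: one must verify with some care that the colimit $(s^*F)(Y)$ is independent of the chosen descent datum, that it is functorial in morphisms in $Sm(X)$, and that it agrees with the naive restriction when $Y$ already lies in $Sm(S)$. These points all reduce to standard cofinality arguments built on the descent of morphisms of finitely presented schemes along cofiltered affine limits, but they should be arranged systematically to avoid circularity in the definition.
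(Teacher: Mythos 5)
The paper does not actually prove this lemma: it simply cites \cite[Lemmas A.3 and A.4]{hoyois-algebraic-cobordism}. Your argument is, in outline, exactly the standard one carried out in that reference (descend objects and distinguished squares to a finite stage via EGA IV.8, then use that filtered colimits of sets commute with finite limits), so on the whole it is a correct reconstruction of the proof the paper delegates.

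One point deserves more care. You \emph{define} $(s^*F)(Y)$ by the colimit formula, which makes the displayed identity $(s^*F)(X)=\colim_\alpha F(X_\alpha)$ true by fiat rather than a statement about the presheaf inverse image. But elsewhere in the paper $s^*$ (resp.\ $p^*$) must be the usual left adjoint to restriction --- e.g.\ in Theorem \ref{thm:GW-times-strictly-htpy-inv} and Corollary \ref{corr:app-cont} one uses that $p^*$ preserves strictly homotopy invariant sheaves and that $p^*\ul{GW}\iso\ul{GW}$, both of which refer to the categorical pullback. So the real content of the first claim is that the left Kan extension $(s^*F)(Y)=\colim_{(Y'\in Sm(S),\,Y\to Y'\times_S X)}F(Y')$ is computed by your directed system $\{Y_\alpha\}_{\alpha\ge\alpha_0}$, i.e.\ that this system is cofinal in the relevant comma category. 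That cofinality is exactly the statement you invoke, but you aim it at well-definedness and functoriality of your ad hoc construction rather than at identifying it with $s^*$; you should state and prove that identification explicitly (it again reduces to spreading out morphisms of finitely presented schemes to a finite stage). With that adjustment the first part is complete. The second part is fine as written: the Morel--Voevodsky characterization of Nisnevich sheaves via elementary distinguished squares is available precisely because $X$ and $S$ are assumed Noetherian of finite Krull dimension, every such square over $Y\in Sm(X)$ descends to some $Y_{\alpha}$, and exactness of filtered colimits against finite limits of sets finishes the argument.
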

\begin{proof}
This is a very special case of \cite[Lemmas A.3 and A.4]{hoyois-algebraic-cobordism}.
\end{proof}

\begin{corr} \label{corr:app-cont}
If $K$ is a field, then $GW(K) = \colim_k GW(k)$, where $k$ runs through
the subfields of $K$ which are finitely generated over the prime subfield. Such
$k$ in particular have finite virtual 2-étale cohomological dimension.

More generally, let $p: X \to S$ be an essentially smooth morphism between
Noetherian schemes of finite dimension. Then there is a canonical isomorphism $p^*
\ul{GW} \iso \ul{GW}$ in $Shv(Sm(X)_{Nis})$.

The same is true for $I^n$ or $W$ in place of $GW$.
\end{corr}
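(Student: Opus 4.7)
The plan is to deduce the corollary directly by combining the two preceding lemmas, with minor bookkeeping for the variants $W$, $I^n$ and for the claim about virtual cohomological dimension.

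First, for the initial assertion I would write the field $K$ as the filtered union of its subfields $k \subset K$ which are finitely generated over the prime subfield $k_0$. Each such $k$ is essentially of finite type over $k_0$ (indeed of finite type), and $\Spec(K) = \lim_k \Spec(k)$ as a cofiltered inverse limit with affine transition maps. Applying Lemma \ref{lemm:witt-continuity} with $S = \Spec(k_0)$ then yields $GW(K) = \colim_k GW(k)$. The side remark on virtual cohomological dimension is a standard fact: if $k_0 = \FF_p$ then $\mathrm{cd}_2(k_0) \le 1$, while if $k_0 = \QQ$ then $\mathrm{vcd}_2(k_0) = 2$ by Tate, and for any finitely generated extension $k/k_0$ of transcendence degree $d$ one has $\mathrm{vcd}_2(k) \le \mathrm{vcd}_2(k_0) + d$ by \cite[Theorem 28 of Chapter 4]{shatz1972profinite}; so every such $k$ has finite $\mathrm{vcd}_2$.

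Next, for the sheaf-level statement, let $Y \in Sm(X)$. Since $p: X \to S$ is essentially smooth and $Y \to X$ is smooth, the composite $Y \to S$ is essentially smooth (hence essentially of finite type) between Noetherian schemes of finite dimension, and we can write $Y = \lim_\alpha Y_\alpha$ as a cofiltered inverse limit of smooth $S$-schemes with affine transition maps (by pulling back a presentation of $X$ along $Y \to X$). Lemma \ref{lemm:Nis-cont} applied to the presheaf $GW$ on $Sm(S)$ gives $(p^* GW)(Y) = \colim_\alpha GW(Y_\alpha)$, and Lemma \ref{lemm:witt-continuity} gives $GW(Y) = \colim_\alpha GW(Y_\alpha)$. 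These canonical identifications agree, so we obtain a natural isomorphism $(p^* GW)(Y) \iso GW(Y)$, visibly compatible with restrictions in $Y$. This gives a morphism of presheaves on $Sm(X)$ which is a sectionwise isomorphism; since $\ul{GW}$ is already a Nisnevich sheaf (and $p^*$ preserves sheaves, by the second part of Lemma \ref{lemm:Nis-cont}), we conclude that $p^* \ul{GW} \iso \ul{GW}$ as Nisnevich sheaves on $Sm(X)$.

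Finally, for the variants $W$ and $I^n$: the Witt group $W$ is the cokernel of $\ZZ \to GW$ sending $1$ to the hyperbolic form $h$, so continuity for $W$ follows from continuity for $GW$ together with exactness of filtered colimits; similarly, $I$ is the kernel of the dimension homomorphism $GW \to \ZZ^{\pi_0(-)}$, and $\pi_0$ commutes with cofiltered limits of quasi-compact quasi-separated schemes along affine transition maps, so $I(Y) = \colim_\alpha I(Y_\alpha)$, and then $I^n(Y) = \colim_\alpha I^n(Y_\alpha)$ because each $I^n$ is generated as an additive group by $n$-fold products from $I$. The rest of the argument is then the same as for $GW$. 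The main (and really only) conceptual point is the verification that every $Y \in Sm(X)$ actually arises as an inverse limit $Y = \lim Y_\alpha$ of a system of smooth $S$-schemes compatible with the chosen presentation of $X$, which is immediate from base change in the given presentation $X = \lim X_\alpha$.
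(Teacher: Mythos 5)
Your overall strategy matches the paper's: the first claim is Lemma \ref{lemm:witt-continuity} applied to $Spec(K) = \lim_k Spec(k)$, the remark on virtual cohomological dimension is the standard Shatz/Tate bound, and the variants $W$, $I$, $I^n$ are handled exactly as in the paper (kernel/cokernel of maps to and from $\ZZ$ plus exactness of filtered colimits, and generation of $I^n$ by $n$-fold products for the powers).

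The one step where your justification is insufficient as written is the passage from the presheaf-level statement to the sheaf-level one. What you actually establish is an isomorphism of \emph{presheaves} $p^*GW \iso GW$ on $Sm(X)$, where $p^*$ is the presheaf pullback computed by Lemma \ref{lemm:Nis-cont}. But $\ul{GW}$ on $Sm(X)$ is by definition the Nisnevich sheafification $a_{Nis}(GW|_{Sm(X)})$, whereas $p^*\ul{GW}$ has sections $\colim_\alpha \ul{GW}(Y_\alpha)$ --- a colimit of sections of the \emph{sheafified} presheaf over the $Y_\alpha$, which you never compute and which need not agree with $\colim_\alpha GW(Y_\alpha)$ termwise. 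The fact that $p^*$ preserves Nisnevich sheaves tells you that $p^*\ul{GW}$ is a sheaf, but it does not by itself identify that sheaf with the sheafification of $p^*GW$. The missing ingredient, which the paper supplies explicitly, is that $p^*$ commutes with the associated-sheaf functor (Stacks Project, Tag 00WY); granting that, $p^*\ul{GW} = p^*a_{Nis}GW \iso a_{Nis}p^*GW \iso a_{Nis}(GW|_{Sm(X)}) = \ul{GW}$, and your argument closes. (A minor slip elsewhere: a finitely generated field extension $k/k_0$ of positive transcendence degree is essentially of finite type but not of finite type over $k_0$; this does not affect the argument.)
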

\begin{proof}
Since $K = \bigcup_k k$ we have $Spec(K) = \lim_k Spec(k)$. Also the system is
filtering with affine transition morphisms. Hence the first claim follows from
Lemma \ref{lemm:witt-continuity}. For the claim about cohomological dimension,
see \cite[Theorem 28 of
Chapter 4]{shatz1972profinite}.

For the more general statement, we note that the statement with the presheaf
$GW$ in place of the sheaf $\ul{GW}$ follows from Lemmas
\ref{lemm:witt-continuity} and \ref{lemm:Nis-cont}.
So we need to show that $p^*$ commutes with taking
the associated sheaf. This follows from \cite[Tag 00WY]{stacks-project}.

Filtered colimits of abelian groups are exact, so the case of $I^1 =
ker(GW \to \ZZ)$ and $W = coker(\ZZ \to GW)$ follow from
$GW$. For any
filtering system $(R_\alpha, I_\alpha)$ of rings with a specified ideal we get
$\colim_\alpha I_\alpha^n \iso (\colim_\alpha I_\alpha)^n$, and hence we have
established the claim about $I^n$. Finally the claims about $\ul{I}^n,
\ul{W}$ are deduced from the results for $I^n, W$ as before.
\end{proof}

\begin{rmk}
Suitably formulated, the results in this section hold in
much greater generality. We do not need this in the present article, so avoid
the extra complications.
\end{rmk}

\bibliographystyle{plainc}
\bibliography{bibliography}

\end{document}